\documentclass[a4paper,11pt]{amsart}

\usepackage{amsmath, amsthm, amsfonts, amssymb, enumerate}
\usepackage[bookmarks=false]{hyperref} 
\usepackage{xcolor}
\usepackage{verbatim}
\usepackage[normalem]{ulem}
\usepackage{tikz-cd}
\usepackage[mathscr]{eucal}

\setlength{\oddsidemargin}{0pt}
\setlength{\evensidemargin}{0pt}
\setlength{\topmargin}{-5pt}
\setlength{\textheight}{640pt}
\setlength{\textwidth}{460pt}
\setlength{\headsep}{40pt}
\setlength{\parindent}{0pt}
\setlength{\parskip}{1ex plus 0.5ex minus 0.2ex}

\title[Relative solidity in measure equivalence]{Relative solidity for biexact groups in measure equivalence}
\author{Changying Ding}
\address{Department of Mathematics, UCLA, Los Angeles, CA 90095, USA}
\email{cding@math.ucla.edu}

\author{Daniel Drimbe}
\address{Department of Mathematics, The University of Iowa, Iowa City, IA 52242, USA}
\email{daniel-drimbe@uiowa.edu}

\newtheorem{main}{Theorem}
\newtheorem{mcor}[main]{Corollary}

\newtheorem{conjecture}[main]{Conjecture}

\newtheorem{thm}{Theorem}[section]
\newtheorem{prop}[thm]{Proposition}

\newtheorem{cor}[thm]{Corollary}
\newtheorem{lem}[thm]{Lemma}
\theoremstyle{definition}
\newtheorem{defn}[thm]{Definition}
\newtheorem{defn/lem}[thm]{Definition/Lemma}
\newtheorem{rem}[thm]{Remark}
\newtheorem{nota}[thm]{Notation}
\newtheorem{examp}[thm]{Example}

\newtheorem{note}[thm]{Notation}

\newcommand{\B}{{\mathbb B}}
\newcommand{\C}{{\mathbb C}}
\newcommand{\F}{{\mathbb F}}

\newcommand{\K}{{\mathbb K}}
\newcommand{\M}{{\mathbb M}}
\newcommand{\N}{{\mathbb N}}

\newcommand{\bS}{{\mathbb S}}

\newcommand{\X}{{\mathbb X}}
\newcommand{\Z}{{\mathbb Z}}
\newcommand{\cP}{{\mathcal P}}
\newcommand{\cR}{{\mathcal R}}
\newcommand{\cC}{{\mathcal C}}
\newcommand{\cB}{{\mathcal B}}
\newcommand{\cF}{{\mathcal F}}
\newcommand{\cG}{{\mathcal G}}

\newcommand{\cK}{{\mathcal K}}
\newcommand{\cL}{{\mathcal L}}
\newcommand{\cM}{{\mathcal M}}
\newcommand{\cN}{{\mathcal N}}

\newcommand{\cS}{{\mathcal S}}
\newcommand{\cU}{{\mathcal U}}

\newcommand{\cZ}{{\mathcal Z}}

\newcommand{\car}{\curvearrowright}

\newcommand{\Ad}{\operatorname{Ad}}

\newcommand{\Aut}{\operatorname{Aut}}

\newcommand{\id}{\operatorname{id}}

\newcommand{\SL}{\operatorname{SL}}

\newcommand{\Tr}{\operatorname{Tr}}

\newcommand{\ot}{\otimes}
\newcommand{\ovt}{\, \bar{\otimes}\,}

\newcommand{\alg}{{\operatorname{alg}}}

\newcommand{\ds}{{\sharp\kern-.5pt\sharp}}
\newcommand{\op}{{\rm op}}

\newcommand{\actson}{{\, \curvearrowright \,}}


%
\begin{document}

\maketitle
\begin{abstract}
We demonstrate a relative solidity property for the product of a nonamenable biexact group with an arbitrary infinite group in the measure equivalence setting.
Among other applications, we obtain the following unique product decomposition for products of nonamenable biexact groups, strengthening \cite{Sa09}:
	for any nonamenable biexact groups $\Gamma_1,\cdots, \Gamma_n$,
	if a product group $\Lambda_1\times \Lambda_2$ is measure equivalent to $\times_{k=1}^n\Gamma_k$, then there exists a partition $T_1\sqcup T_2=\{1,\dots, n\}$ such that $\Lambda_i$ is measure equivalent to $\times_{k\in T_i}\Gamma_k$ for $i=1,2$.
\end{abstract}

\section{Introduction}

Two countable groups $\Gamma$ and $\Lambda$ are said to be {\it measure equivalent} in the sense of Gromov \cite{Gr93} if there exist commuting free measure preserving actions of $\Gamma$ and $\Lambda$ on a standard $\sigma$-finite measure space $(\Omega,\mu)$ such that both the actions  $\Gamma\car (\Omega,\mu)$ and $\Lambda \car (\Omega,\mu)$ admit a finite measure fundamental domain. The classification of countable groups up to measure equivalence is a central theme in measured group theory and many spectacular innovations have been made in the last 25 years, see for instance the introduction of \cite{HHI21}. 

In their seminal work \cite{MS02}, Monod and Shalom used techniques from bounded cohomology theory to obtain the following general unique prime factorization result: if $\times_{k=1}^n \Gamma_k$ is a product of non-elementary torsion-free hyperbolic groups 
that  is measure equivalent to a product $\times_{i=1}^m \Lambda_i$ of torsion-free groups and $m\ge n$, then $n = m$ and after a permutation of the indices, $\Gamma_i$ is measure equivalent to $\Lambda_i$ for any $1\leq i\leq n$.
Sako then used  C$^*$-algebraic techniques from \cite{Oz04,OP03}, to extend in \cite{Sa09} (see also \cite{ChSi13}) the above unique prime factorization results to products of nonamenable biexact groups. For additional such unique prime factorization results, see \cite{DHI16,Dri23}.

In the framework of II$_1$ factors, Ozawa and Popa obtained in \cite{OP03} a striking analogue to Monod and Shalom's unique prime factorization result. They showed that if $\Gamma_1,\dots, \Gamma_n$ are i.c.c. (infinite conjugacy class)  nonamenable hyperbolic groups (more generally, biexact groups) such that the group von Neumann algebra $L(\times_{k=1}^n \Gamma_k)$ is stably isomorphic to a tensor product of  ${\rm II}_1$ factors  $\ovt_{i=1}^m N_i$  and $m\ge n$, then $m=n$ and after a permutation of the indices, $L(\Gamma_i)$ is stably isomorphic to $L(\Lambda_i)$ for any $1\leq i\leq n$. In fact, Ozawa and Popa proved a more general unique prime factorization phenomenon by classifying {\it all tensor product decompositions} of $L(\times_{k=1}^n \Gamma_k)$. More precisely, they showed that if $L(\times_{k=1}^n \Gamma_k)$ is stably isomorphic to a tensor product of ${\rm II}_1$ factors $N_1\ovt N_2$, 
	then there exists a partition $T_1\sqcup T_2=\{1,\dots, n\}$ such that $N_i$ is stably isomorphic to $L(\times_{k\in T_i}\Gamma_k)$ for any $1\leq i\leq 2$.


In our first main result we show that a stronger form of Sako's unique prime factorization result \cite{Sa09} holds 
 by classifying {\it all i.c.c.\ product groups} that are measure equivalent to a product of nonamenable biexact groups.

\begin{main}\label{theorem.upf.biexact}
Let $\Gamma_1, \dots, \Gamma_n$ be  nonamenable biexact groups. 
Suppose $\times_{k=1}^n\Gamma_k$ is measure equivalent to a product $\Lambda_1\times \Lambda_2$ of i.c.c. groups.
Then there exists a partition $T_1\sqcup T_2=\{1,\dots, n\}$
	such that $\Lambda_j$ is measure equivalent to $\times_{i\in T_j} \Gamma_i$ for $j=1,2$.
\end{main}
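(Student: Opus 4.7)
The plan is to adapt the Ozawa--Popa scheme of \cite{OP03} --- which uses solidity to classify \emph{all} tensor product decompositions of $L(\times_k \Gamma_k)$ --- to the measure equivalence setting, with the paper's relative solidity theorem playing the role that Ozawa's solidity plays there.

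First, I would realize the coupling operator-algebraically. Let $(\Omega,\mu)$ be a $(\Gamma,\Lambda)$-coupling with $\Gamma := \times_{k=1}^n\Gamma_k$ and $\Lambda := \Lambda_1\times\Lambda_2$. Choosing fundamental domains $Y$ for $\Lambda\car\Omega$ and $X$ for $\Gamma\car\Omega$, the induced p.m.p.\ actions $\Gamma\car Y$ and $\Lambda\car X$ are stably orbit equivalent, so after a suitable amplification the crossed product
\[
M := L^\infty(Y)\rtimes\Gamma \;\simeq\; L^\infty(X)\rtimes\Lambda
\]
is a common ${\rm II}_1$ factor (the i.c.c.\ hypothesis on each $\Lambda_j$ yielding factoriality). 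Inside $M$ we then have the commuting pair $L\Lambda_1, L\Lambda_2$ and, for each $k$, the intermediate subalgebra $A_k := L^\infty(Y)\rtimes \hat\Gamma_k$, where $\hat\Gamma_k := \times_{j\neq k}\Gamma_j$.

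Next, I would feed the decomposition $\Gamma = \Gamma_k\times\hat\Gamma_k$ into the paper's relative solidity theorem. The expected conclusion is a dichotomy for each $k$: either $L\Lambda_1 \prec_M A_k$, or its commutant $L\Lambda_2 = (L\Lambda_1)'\cap (L\Lambda_1\ovt L\Lambda_2) \prec_M A_k$. Define
\[
T_1 := \{k : L\Lambda_1 \prec_M A_k\}, \qquad T_2 := \{1,\dots,n\}\setminus T_1,
\]
so that $L\Lambda_2 \prec_M A_k$ for every $k\in T_2$; the nonamenability of each $\Gamma_k$ rules out the degenerate situation in which both factors simultaneously intertwine into the same $A_k$ (which, iterated, would force one factor into $L^\infty(Y)$). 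Standard stability of Popa's intertwining-by-bimodules under intersections of commuting crossed products then upgrades these to
\[
L\Lambda_1 \prec_M L^\infty(Y)\rtimes\Gamma_{T_2}, \qquad L\Lambda_2 \prec_M L^\infty(Y)\rtimes\Gamma_{T_1},
\]
where $\Gamma_{T_j} := \times_{k\in T_j}\Gamma_k$.

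Finally, I would convert each intertwining into an honest measure equivalence through a cocycle untwisting argument in the spirit of Sako \cite{Sa09}: the intertwining yields a nonzero partial isometry conjugating a corner of $L\Lambda_1$ into $L^\infty(Y)\rtimes\Gamma_{T_2}$, which, translated back through the stable orbit equivalence, produces a measurable $\Lambda_1$-equivariant identification of a positive-measure piece of $\Omega$ with a $\Gamma_{T_2}$-space carrying a finite fundamental domain, and symmetrically for $\Lambda_2$ and $\Gamma_{T_1}$. The hard part will be precisely this last step: Popa's $\prec$ is intrinsically local and measure-theoretic, whereas an ME coupling demands a globally defined cocycle taking values in the target subgroup. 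Carefully chasing the fundamental domain through the partial isometries delivered by intertwining, and using the i.c.c.\ and ergodicity hypotheses to globalize, is where the ME argument genuinely diverges from its ${\rm II}_1$-factor counterpart in \cite{OP03} and constitutes the technical heart of the proof.
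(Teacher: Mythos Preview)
Your dichotomy in the second paragraph is not what the paper's relative solidity theorem delivers. Theorem~\ref{thm: rel solid} (and its variants in Section~\ref{sec: rel solid}) says: either $L^\infty(X)\rtimes\Lambda_1 \prec_M A_k$, or $L^\infty(X)\rtimes\Lambda_2$ is \emph{amenable relative to} $A_k$ inside $M$. The second branch is not an intertwining, and nothing in your sketch promotes it to one; an ``intertwine or intertwine'' dichotomy for commuting subalgebras would be strictly stronger even than the open Conjecture~\ref{conj}. Nor can you kill the relatively-amenable branch by intersecting over $k$: neither $\Lambda_1$ nor $\Lambda_2$ is assumed nonamenable, so ``$L^\infty(X)\rtimes\Lambda_2$ amenable relative to $L^\infty(X)$'' is not a contradiction. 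Your partition $T_1\sqcup T_2$ therefore never gets off the ground.

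This is exactly why the paper's proof is so indirect. It never applies relative solidity to $L\Lambda_1,L\Lambda_2$. Instead it passes through the comultiplication $\Delta\colon M\to M\ovt L(\Lambda)$ and a partial flip $\sigma\in\Aut(\cM\ovt\cM)$ (after \cite{IM22,Dri23}) to produce $2n$ commuting subalgebras $R_i=\sigma(\Delta(L\Gamma_i)\ot 1)$, $R_{n+i}=\sigma(1\ot\Delta(L\Gamma_i))$. The flip variant of relative solidity (Theorem~\ref{lem: flip-rel solid}) is applied to these: for each $k,j$ either $R_{\hat k}z$ intertwines into $\cM\ovt M\ovt(A\rtimes\Gamma_{\hat j})$ or $R_kz$ is relatively amenable there. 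Now the relatively-amenable object is built from a single $\Gamma_k$, whose nonamenability \emph{is} hypothesised; an intersection argument over $j$ plus the flip symmetry between $R_k$ and $R_{k+n}$ rules out the bad configurations via \cite{PoVa14I,IoPoVa13}. A combinatorial bijection step then manufactures the partition and yields $L(\Gamma_{T_i})\prec_M B\rtimes\Lambda_i$ (direction $\Gamma\to\Lambda$, opposite to yours). The final passage to measure equivalence is immediate from Proposition~\ref{lem: ME intertwining} and Lemma~\ref{lem: flip intertwining ME}, so what you flag as ``the technical heart'' is in fact routine; the genuine difficulty is upstream, and your direct approach does not reach it.
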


Note that Theorem~\ref{theorem.upf.biexact} is a consequence of \cite{DHI16} under the stronger assumption that $\Gamma_1,\dots,\Gamma_n$ are non-elementary hyperbolic groups. Results in \cite{DHI16} were achieved by using a combination of tools from Popa's deformation/rigidity theory including
the fundamental work of Popa and Vaes \cite{PoVa14II} 
which shows that any  hyperbolic (more generally, groups that are weakly amenable and biexact)  satisfies the following {\it relative strong solidity property}: if $\Gamma\car N$ is a trace preserving action of a hyperbolic group and $P\subset M:=N\rtimes\Gamma$ is a von Neumann subalgebra that is amenable relative to $N$ inside $M$, then $P\prec_M N$ or $\mathcal {N}_{M}(P)''$ is amenable relative to $N$. An essential ingredient used in \cite{DHI16} is that any group $\Gamma$ with the relative strong solidity property satisfies the following {\it relative solidity property}: 
    if $\Gamma\car N$ is a trace preserving action of a hyperbolic group and $P,Q\subset M:=N\rtimes\Gamma$ are commuting von Neumann subalgebras, then $P\prec_{M} N$ or $Q$ is amenable relative to $N$ inside $M$ (see \cite[Lemma 5.2]{KV15}). Note that the converse is false: there exist groups (e.g. biexact group that are not weakly amenable such as $\mathbb Z\wr\mathbb F_2$) that satisfies the relative solidity property, but not the strong relative property.
    Here, the intertwining is in the sense of Popa \cite{Po06A} and relative amenability is in the sense of Ozawa and Popa \cite{OzPo10I}.


It is an open question whether the above relative solidity property is satisfied by general biexact groups, and hence, this leads to the main technical difficulty for proving Theorem \ref{theorem.upf.biexact}.

\begin{conjecture}\label{conj}
Let  $\Gamma$ be a biexact group. If $\Gamma\car N$ is a trace preserving action and $P,Q\subset M:=N\rtimes\Gamma$ are commuting von Neumann subalgebras, then $P\prec_{M} N$ or $Q$ is amenable relative to $N$ inside $M$. 
\end{conjecture}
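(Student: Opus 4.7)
The plan is to adapt the boundary-theoretic technique developed by Ozawa \cite{Oz04} and sharpened for crossed products by Popa and Vaes \cite{PoVa14II} to the general biexact setting, attempting to substitute the commutation relation $[P,Q]=0$ for the weak amenability hypothesis used in those works. The overall dichotomy would proceed by assuming $P\not\prec_M N$ and producing a $Q$-central state on the basic construction $\langle M, e_N\rangle$ whose restriction to $M$ is the trace, thereby witnessing relative amenability of $Q$ over $N$ in the sense of \cite{OzPo10I}.

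First, using biexactness of $\Gamma$ alone and the standard $C^*$-algebraic extension procedure, one obtains a continuous ucp map $\Theta\colon M\ot_{\mathrm{bin}} M^{\op}\to \B(L^2M)/\K_N$, where $\K_N$ denotes the two-sided ideal of $N$-compact operators inside $\langle M,e_N\rangle$. Next, Popa's intertwining criterion \cite{Po06A} combined with the hypothesis $P\not\prec_M N$ supplies a sequence of unitaries $u_n\in\cU(P)$ satisfying $\|E_N(x u_n y)\|_2\to 0$ for all $x,y\in M$. A direct computation then shows that the associated vector states $\omega_n(T)=\langle T\, \hat u_n,\hat u_n\rangle$ on $\langle M,e_N\rangle$ restrict to the trace on $M$ and satisfy $\omega_n(x e_N y)\to 0$ for all $x,y\in M$, so that any weak$^*$-limit $\varphi$ vanishes on $\K_N$ and hence descends, via $\Theta$, to a state on $M\ot_{\mathrm{bin}} M^{\op}$. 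The remaining task is to arrange that $\varphi$ be $Q$-central, by exploiting $[u_n,q]=0$ for $q\in Q$ together with the structural information carried by $\Theta$.

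The main obstacle is precisely this last step. In the weakly amenable case treated in \cite{PoVa14II}, Cowling--Haagerup approximations yield a weakly compact action of the normalizer on a suitable ultraproduct, which produces centrality of a well-chosen limit state via a standard averaging argument. Without weak amenability this averaging device is absent, and one must try to coax $Q$-centrality directly out of the identities $u_n q = q u_n$ and the intertwining features of $\Theta$. Rigorously controlling the commutators $[\Theta(T),q]$ in the quotient $\B(L^2M)/\K_N$, in the absence of an approximate identity of $(P,P)$-bimodular finite-rank maps on $L^2M$, is exactly where the argument appears to genuinely stall. The persistence of this gap for standard examples such as $\Z\wr\F_2$ reflects the depth of the conjecture and strongly suggests that a fundamentally new input beyond biexactness alone is required.
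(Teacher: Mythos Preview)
The statement you are attempting to prove is labeled \emph{Conjecture} in the paper, and the paper explicitly says it is open in full generality. There is no proof of it in the paper to compare against; instead the authors prove only a measure-equivalence variant (their Theorem~\ref{thm: rel solid} / Theorem~\ref{thm: rel solid in ME}), where the subalgebras $P,Q$ are of the special form $L^\infty(Y)\rtimes\Delta$ and $L^\infty(Y)\rtimes C_\Lambda(\Delta)$ arising from an orbit-equivalent action.

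Your diagnosis of the obstruction is accurate and matches the paper's own account. The paper's Remark~\ref{rem: technical} pinpoints the same failure in slightly more precise terms: biexactness of $\Gamma$ yields u.c.p.\ maps $\phi_n$ through $\bS(L\Gamma)$ which converge to the identity only on $(M\ovt L\Lambda_2)\ot_{\rm alg} L\Lambda_1$, not on the full tensor product. Weak amenability is exactly what upgrades this to convergence on all of $M\ovt L\Lambda$ (this is how \cite{PoVa14II} and \cite{Iso19} proceed), and without it the averaging/centrality step you describe genuinely does not go through for arbitrary commuting $P,Q\subset M$. The paper circumvents this, in the restricted measure-equivalence setting only, by building auxiliary approximation maps $\psi_m$ out of the orbit-equivalence cocycle that push general elements $\Delta(u_t)$ into the algebraic tensor product before applying $\phi_n$; this trick has no analogue for abstract commuting subalgebras.

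So your proposal is not a proof, and you correctly recognize this. The honest conclusion is the one you reach: the conjecture remains open, and the known approaches require either weak amenability of $\Gamma$, W$^*$CMAP of $N$, or---as in this paper---additional structure on $P$ and $Q$ coming from measure equivalence.
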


This problem originates from the pioneering work of Ozawa \cite{Oz04,Oz06} which states that Conjecture \ref{conj} is true whenever $N$ is a tracial abelian von Neumann algebra. 
Note that \cite{PoVa14II} shows that Conjecture \ref{conj} is true under the additional assumption that $\Gamma$ is weakly amenable.
Also, \cite[Proposition 7.3]{Iso19} shows that Conjecture \ref{conj} has a positive answer when $N$ satisfies the W$^*$CMAP. 

Despite the fact that Conjecture \ref{conj} is still open in its full generality, we are able to overcome this difficulty and prove Theorem \ref{theorem.upf.biexact} by showing a measure equivalence variant of Conjecture \ref{conj} holds true. 
For simplicity we state here an orbit equivalence version (see also Theorem~\ref{thm: rel solid in ME} for an equivalent measure theoretical statement).



\begin{main}\label{thm: rel solid}
Let $\Gamma$ be a nonamenable biexact group and $\Lambda$, $\Sigma$ infinite groups.
Suppose $\Lambda\actson (Y,\nu)$ and $\Gamma\times \Sigma\actson (X, \mu)$ are orbit equivalent free ergodic p.m.p.\ actions
	and denote $M=L^\infty(X,\mu)\rtimes(\Gamma\times \Sigma)=L^\infty(Y,\nu)\rtimes\Lambda$.
		
Then for any subgroup $\Delta<\Lambda$, we have either $L^\infty(Y)\rtimes\Delta\prec_M L^\infty(X,\mu)\rtimes\Sigma$ or
	$L^\infty(Y)\rtimes C_{\Lambda}(\Delta)$ is amenable relative to $L^\infty(X,\mu)\rtimes\Sigma$ in $M$,
	where $C_\Lambda(\Delta)$ denotes the centralizer of $\Delta$ in $\Lambda$.	
\end{main}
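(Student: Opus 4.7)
The plan is to translate the statement to a crossed product problem, assume the intertwining alternative fails, and then construct a relatively amenable approximation using the biexactness of $\Gamma$ through an Ozawa-style boundary argument, in a way that leverages the Cartan structure furnished by the orbit equivalence (which is precisely the extra input replacing the still-open Conjecture \ref{conj}). First, I would use the orbit equivalence to identify $A := L^\infty(X,\mu) = L^\infty(Y,\nu)$ as a common Cartan subalgebra of $M$, so that $M = A \rtimes (\Gamma \times \Sigma) = A \rtimes \Lambda$. Setting $N := A \rtimes \Sigma$, we get $M = N \rtimes \Gamma$ with $\Gamma$ acting on $N$ through its action on $A$ (commuting with the $\Sigma$-action). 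The two subalgebras of interest, $A\rtimes\Delta$ and $A\rtimes C_\Lambda(\Delta)$, both contain $A$ and are jointly governed by the commutation relation $[\Delta,C_\Lambda(\Delta)]=1$ inside $\Lambda$.

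Assume the first conclusion fails, i.e.\ $A\rtimes\Delta\not\prec_M N$. By Popa's intertwining criterion, there is a net of unitaries $(w_i)\subset \mathcal{U}(A\rtimes\Delta)$ with $\|E_N(x w_i y)\|_2\to 0$ for every $x,y\in M$. Heuristically, $w_i$ spreads out along the $\Gamma$-direction: in the Fourier expansion of $w_i$ over the $\Gamma$-coset decomposition of $M$ over $N$, the mass escapes to infinity in $\Gamma$. The goal is now to produce a $(A\rtimes C_\Lambda(\Delta))$-central state on the basic construction $\langle M,e_N\rangle$ whose restriction to $M$ is the canonical trace; such a state witnesses the relative amenability of $A\rtimes C_\Lambda(\Delta)$ over $N$ inside $M$.

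To build this state, I would invoke the biexactness of $\Gamma$: it supplies a unital $\Gamma$-invariant C$^*$-subalgebra $\mathbb{S}\subset \ell^\infty(\Gamma)$ containing $c_0(\Gamma)$ on which the diagonal $\Gamma$-action on $\mathbb{S}/c_0(\Gamma)$ is topologically amenable and on which the left-right $\Gamma\times\Gamma$ translation action is continuous. Tensoring with the tracial action $\Gamma\car N$ and composing with averaging-by-$w_i$ against the boundary, the asymptotic vanishing of $E_N(\cdot w_i\cdot)$ pushes all contributions into the quotient $\mathbb{S}/c_0(\Gamma)$, where amenability provides an invariant mean. Passing to an ultralimit of these averages yields a u.c.p.\ map $\Phi\colon \langle M,e_N\rangle\to M$ that is $N$-bimodular. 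The crucial remaining point is $(A\rtimes C_\Lambda(\Delta))$-centrality: this should follow because elements of $C_\Lambda(\Delta)$ commute with the $\Delta$-part of $w_i$, and because $A$ is Cartan (thus normalized by $\Lambda$), so conjugation by $w_i$ approximately commutes with any $a u_\mu$ for $\mu\in C_\Lambda(\Delta)$ modulo the $\Gamma$-direction to which the boundary mean is insensitive.

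The main obstacle is precisely that Conjecture \ref{conj} is open, so there is no black box to apply; the available biexactness-at-infinity machinery of Ozawa produces amenability relative to $A$, not relative to the larger algebra $N = A\rtimes\Sigma$. Overcoming this is the heart of the argument: rather than try to upgrade from $A$ to $N$ for an arbitrary subalgebra, I would exploit that the subalgebras $A\rtimes\Delta$ and $A\rtimes C_\Lambda(\Delta)$ both contain the common Cartan $A$ and are defined by commuting subgroups of $\Lambda$. Tracking how the orbit-equivalence cocycles $c_\Gamma,c_\Sigma\colon \Lambda\times X\to \Gamma,\Sigma$ couple the $\Lambda$-side to the $\Gamma\times\Sigma$-side should allow the $C_\Lambda(\Delta)$-centralization needed in the construction of $\Phi$ to be transported faithfully through the boundary piece $\mathbb{S}\bar\otimes N$, even though the analogous step for an arbitrary subalgebra commuting with $A\rtimes\Delta$ would require the full Conjecture \ref{conj}. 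The measure-theoretic reformulation (Theorem \ref{thm: rel solid in ME}) is what makes this cocycle-based detour natural to execute.
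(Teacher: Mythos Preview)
Your plan correctly isolates the central obstacle---biexactness alone does not yield relative solidity over a general crossed product base (this is exactly Conjecture~\ref{conj})---and you are right that the Cartan/cocycle structure coming from orbit equivalence is what replaces the missing hypothesis. However, the final paragraph, where you propose to overcome the obstacle by ``tracking how the orbit-equivalence cocycles couple the two sides,'' is not a mechanism; and the averaging-by-$w_i$ scheme sketched earlier runs head-on into the difficulty you name without resolving it.

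Concretely: the u.c.p.\ maps $\phi_n$ furnished by biexactness of $\Gamma$ (whether via Ozawa's $\bS\subset\ell^\infty\Gamma$ or the framework of \cite{DP22}) converge to the identity only on $N\otimes_{\rm alg}\C\Gamma$, \emph{not} on all of $N\rtimes\Gamma$. The unitaries implementing $C_\Lambda(\Delta)$ are infinite $\Gamma$-sums, so your assertion that ``conjugation by $w_i$ approximately commutes with $au_\mu$ modulo the $\Gamma$-direction to which the boundary mean is insensitive'' has no force until you explain how to reduce to the algebraic regime. Your proposal never does this; it only asserts that cocycles ``should allow'' it.

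The paper's proof supplies the missing step by two devices absent from your outline. First, it passes through the comultiplication $M\to M\ovt L(\Gamma\times\Sigma)$, which isolates a clean tensor copy of $L\Gamma$ on which biexactness is applied directly (so one never needs $M$ itself to be biexact relative to $N$). Second---and this is the genuinely new idea---it builds auxiliary u.c.p.\ maps $\psi_m(x)=p_mxp_m+\tau(x)p_m^\perp$ from projections $p_m\in A$ increasing to $1$, chosen so that for each fixed $\lambda\in\Lambda$ one has $p_m u_\lambda p_m\in A\rtimes_{\rm alg}(\Gamma\times\Sigma)$ for all large $m$ (this uses only that the cocycle $c(\lambda,\cdot)$ takes finitely many values on a set of measure close to $1$). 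Precomposing with $\psi_m$ forces the comultiplied images of $u_\lambda$ into the algebraic tensor product where $\phi_n$ does converge; an iterated limit then yields the required central state. Remark~\ref{rem: technical} singles out exactly this truncation as the replacement for weak amenability. Your proposal names the right raw ingredients (cocycles, Cartan) but does not assemble them into this construction, so as written it does not close the gap.
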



A remarkable progress has been obtained in von Neumann algebras and orbit equivalence by the emergence of Popa’s deformation/rigidity theory \cite{Po07A}. 
For large classes of II$_1$ factors, many remarkable structural properties have been established, such as primeness, (infinite) unique prime factorization, (infinite) product rigidity, classification of normalizers of algebras, W$^*$-superrigidity, etc. (see the surveys \cite{Va10,Io12,Io17}).  Numerous of these findings have been achieved for von Neumann algebras that arise from groups that are biexact and weakly amenable by relying on Popa and Vaes' work \cite{PoVa14II} on the relative (strong) solidity property, see for example \cite{BeVa14,CIK13,CdSS15,KV15,DHI16,ChIo18,CU20,Iso19,Iso20,Dr19,CDK19,CDHK20,CD-AD23A,CDAD23B,DP22, CIOS23,CFQT24, DV24}. Many of these works do not depend on the relative strong solidity property of the groups, but only on the relative solidity property. Hence, in light of Conjecture \ref{conj}, a natural question  is to understand to what extent is weak amenability essential in these results. 
In this paper, we make progress on this question by applying Theorem \ref{thm: rel solid}  to generalize several rigidity results in the literature as outlined below. 

Our next main result, Theorem \ref{theorem.product.rigidity}, concerns orbit equivalence rigidity results for product actions. To put everything into context, note that \cite{CdSS15} shows that the group von Neumann algebra of a product of nonamenable i.c.c.\ weakly amenable and biexact groups remembers the product structure. The main result of \cite{Dr19} states that a similar phenomenon holds in the equivalence relation setting by showing that the orbit equivalence relation of a product of group actions of infinite weakly amenable, biexact, property (T) groups remembers the product structure. Since the group von Neumann algebra product ridigity from \cite{CdSS15} has been extended in \cite{CD-AD23A} to the class of nonamenable i.c.c.\ biexact groups, it remained open to show that the orbit equivalence product rigidity from \cite{Dr19} holds for infinite biexact, property (T) groups which are not necessarily weakly amenable. In the next theorem we affirmatively solve this problem.


\begin{main} \label{theorem.product.rigidity}
Let $\Gamma_1,\dots, \Gamma_n$ be infinite biexact, property (T) groups and let $\Gamma=\times_{i=1}^n\Gamma$.
For each $i=1,\dots, n$, let $\Gamma_i\actson (X_i,\mu_i)$ be a free ergodic p.m.p.\ action and $\Gamma\actson (X,\mu)$ denotes the product action $\Gamma_1\times \cdots\times \Gamma_n\actson (X_1\times \cdots\times X_n, \mu_1\times\cdots \times \mu_n)$.

Let $\Lambda\actson (Y,\nu)$ be a free ergodic p.m.p.\ action of an i.c.c. groups that is stably orbit equivalent to $\Gamma\actson (X,\mu)$.
Then $\Lambda\actson Y$ is induced from a product action 
	$\Lambda_0:=\Lambda_1\times \cdots \times \Lambda_n\actson (Y_1\times \cdots Y_n, \nu_1\times \cdots \nu_n)$ of a subgroup $\Lambda_0<\Lambda$, such that
$\Gamma_i\actson X_i$ is stably orbit equivalent to $\Lambda_i\actson Y_i$ for each $i=1,\dots, n$.
\end{main}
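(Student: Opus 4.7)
My proposed proof proceeds in three phases, following the von Neumann algebraic product rigidity strategy of \cite{CD-AD23A} adapted to the orbit equivalence framework as in \cite{Dr19}, with Theorem~\ref{thm: rel solid} furnishing the crucial new structural input.

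\textbf{Setup and the dichotomy.} First, by a standard amplification/induction argument I would reduce the stable orbit equivalence to a genuine orbit equivalence $\Gamma \actson X \sim_{\mathrm{OE}} \Lambda \actson Y$ and work inside $M := L^\infty(X) \rtimes \Gamma = L^\infty(Y) \rtimes \Lambda$ with common Cartan $A := L^\infty(X) = L^\infty(Y)$. For each $i$, set $\hat\Gamma_i := \times_{j \neq i}\Gamma_j$ and $N_i := A \rtimes \hat\Gamma_i$; because $\Gamma_k$ acts trivially on $X_j$ for $k \neq j$, $M$ decomposes as a tensor product with factor $L^\infty(X_i) \rtimes \Gamma_i$. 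Applying Theorem~\ref{thm: rel solid} with biexact group $\Gamma_i$ and infinite group $\hat\Gamma_i$, for every subgroup $\Delta < \Lambda$ one of the following holds: (i) $A \rtimes \Delta \prec_M N_i$, or (ii) $A \rtimes C_\Lambda(\Delta)$ is amenable relative to $N_i$ in $M$.

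\textbf{Extracting the subgroups $\Lambda_i$ via property (T).} Because each $\Gamma_i$ has property (T), the inclusion $A \subset A \rtimes \Gamma_i$ carries Popa's relative property (T); a classical consequence is that any subalgebra of $M$ containing a unitarily conjugate corner of $A \rtimes \Gamma_i$ and amenable relative to $N_i$ must in fact intertwine into $N_i$. Since $A \rtimes \Gamma_i \not\prec_M N_i$ (as $\mathbb{E}_{N_i}(u_\gamma) = 0$ for every $e \neq \gamma \in \Gamma_i$), alternative (ii) above is forbidden for any $\Delta$ whose centralizer could plausibly house a copy of $\Gamma_i$. Iterating this argument over all $i$ should produce mutually commuting subgroups $\Lambda_1, \dots, \Lambda_n < \Lambda$ with $A \rtimes \hat\Lambda_i \prec_M N_i$, where $\hat\Lambda_i := \times_{j \neq i}\Lambda_j$; Cartan intertwining techniques from \cite{Po06A, PoVa14I} then upgrade this to a stable orbit equivalence $\Gamma_i \actson X_i \sim_{\mathrm{SOE}} \Lambda_i \actson Y_i$ for appropriate $\Lambda_i$-invariant quotients $Y_i$.

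\textbf{Induced structure and main obstacle.} Setting $\Lambda_0 := \Lambda_1 \times \cdots \times \Lambda_n$, a final argument using ergodicity together with the fact that $\Gamma_1, \dots, \Gamma_n$ already generate the full $\Gamma$-orbit equivalence relation on $X$ would show that $\Lambda_0$ has finite index in $\Lambda$ and that $\Lambda \actson Y$ is induced from $\Lambda_0 \actson Y_1 \times \cdots \times Y_n$. The main obstacle is the second phase. In the weakly amenable setting of \cite{Dr19}, the relative strong solidity of \cite{PoVa14II} immediately produces a normalizer amenable relative to $N_i$, which delivers group-level information directly. Theorem~\ref{thm: rel solid} is weaker in that it controls only commutants, not normalizers; compensating for this requires property (T) of the $\Gamma_i$ to do double duty, both ruling out the relative amenability alternative and extracting honest subgroups from purely algebraic intertwining data. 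Importing the \cite{CD-AD23A} bootstrap from the tensor product setting into the Cartan-preserving orbit equivalence setting is the technical heart of the argument.
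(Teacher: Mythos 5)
There is a genuine gap, and it sits exactly where you yourself flag the ``technical heart'': your proposal contains no mechanism for producing subgroups of $\Lambda$. The dichotomy you extract from Theorem~\ref{thm: rel solid} takes a subgroup $\Delta<\Lambda$ as \emph{input} and returns intertwining or relative amenability as \emph{output}; it cannot, by iteration alone, manufacture the commuting subgroups $\Lambda_1,\dots,\Lambda_n<\Lambda$, and the phrase ``iterating this argument over all $i$ should produce mutually commuting subgroups'' is precisely the step with no argument behind it. The paper's engine for this is different: it applies the comultiplication $\Delta:M\to M\bar\otimes L(\Lambda)$, $\Delta(bv_\lambda)=bv_\lambda\otimes v_\lambda$, together with the technical variant Theorem~\ref{cor: rel solid in ME} of the relative solidity theorem, to get $\Delta(L(\Gamma_{\widehat n}))\prec_{M\bar\otimes M} M\bar\otimes(A\rtimes\Gamma_{\widehat j})$ for some $j$ (the all-$i$ relative amenability alternative being excluded via \cite[Proposition 2.7]{PoVa14I} and \cite[Lemma 10.2]{IoPoVa13}), and then invokes Ioana's ultrapower technique (\cite[Theorem 3.1]{Ioa11b}, \cite[Theorem 4.1]{DHI16}) to convert this von Neumann algebraic intertwining into group-level data: a subgroup $\Sigma<\Lambda$ with \emph{nonamenable centralizer} $C_\Lambda(\Sigma)$ and $L(\Gamma_{\widehat n})\prec_M B\rtimes\Sigma$. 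Only after $\Sigma$ and $C_\Lambda(\Sigma)$ exist does the dichotomy of Theorem~\ref{thm: rel solid in ME} enter, to prove $(B\rtimes\Sigma)r\prec_M^s A\rtimes\Gamma_{\widehat n}$ and, via Lemma~\ref{lemma.have.fi}, to distribute $\Sigma_1,\Sigma_2$ over a partition $T_1\sqcup T_2$. Your proposal inverts this order and thereby skips the one step that bridges algebras and groups.

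Two further points would fail as written. First, your assertion that property (T) of $\Gamma_i$ makes the inclusion $A\subset A\rtimes\Gamma_i$ rigid in Popa's sense is false: relative property (T) of a Cartan inclusion depends on the action (e.g.\ $\mathbb Z^2\subset \mathbb Z^2\rtimes\SL_2(\mathbb Z)$), not merely on (T) of the acting group, and here the actions $\Gamma_i\actson X_i$ are arbitrary. In the paper, property (T) is used quite differently: to upgrade the intertwining target from $A\rtimes\Gamma_{\widehat j}$ to $A_{\widehat j}\rtimes\Gamma_{\widehat j}$ (Connes--Jones property (T) of $L(\Gamma_{\widehat n})$); to conclude via \cite[Proposition 3.1]{DHI16} and \cite[Corollary 1.4]{Fu99A} that $\Sigma$ has (T); and, crucially, in the virtual-centralizer argument where $\Delta=\{\lambda\in\Lambda\mid \mathcal O_\Sigma(\lambda)\ \text{finite}\}$ satisfies $[\Lambda:\Delta\Sigma]<\infty$ and (T) of $\Delta\Sigma$ selects from the increasing union $\Delta=\cup_n C_\Lambda(\Omega_n)$ a finite-index commuting product $\Sigma_1\Sigma_2$ --- none of which appears in your outline. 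Second, your closing ``final argument using ergodicity'' for the induced product structure is not elementary: the paper obtains it by verifying the strong intertwining hypotheses $L(\Sigma_i)e\prec_M^s A_{T_i}\rtimes\Gamma_{T_i}$ and then applying the orbit-equivalence decomposition theorem \cite[Theorem 3.1]{Dr19}, followed by an induction on $n$; without that input, finite index of $\Lambda_0$ and the induced-action statement do not follow.
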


By using deep results from geometric group theory \cite{AMO07,Os06}  we point out the following class of biexact, property (T) groups that are not weakly amenable.    
\begin{examp}\label{examples}
Consider the wreath product $H= \mathbb Z \wr \mathbb F_n$ and notice that $H$ is a finitely generated biexact group \cite[Corollary 15.3.9.]{BrOz08}. By \cite[Theorem 1.1]{AMO07} there is a property (T) group $G$ such that $H<G$ with the property that $G$ is hyperbolic relative to $\{H\}$. Since $H$ is biexact, \cite[Theorem 1.1]{Oya23a} implies that $G$ is biexact as well. Note however that $G$ is not weakly amenable, since $H$ is not weakly amenable \cite{OzPo10I,Oz10}.

\end{examp}

We continue by improving several other results from the literature where one may cover nonamenable biexact groups that are not necessarily weakly amenable,
	including $\Z^2\rtimes\SL_2(\Z)$ \cite{Oza09}. See also  \cite{Oya23a, Oya23b} for more recent examples. The proofs of these results uses Theorem \ref{thm: rel solid} and rely on developing a classification result for commuting subalgebras in von Neumann algebras of infinite direct sums of biexact groups, see Corollary \ref{corollary.relative.solidity}.

\begin{itemize}
	\item The weak amenability assumption in \cite[Theorem A]{CU20} is removed in Theorem~\ref{main.infinite.product.rigidity},
		which concerns infinite product rigidity for i.c.c. biexact, property (T) groups
	(that roughly says that the direct sum structure is retained by the group von Neumann algebra of an infinite direct sum of such groups).
	\item Corollary~\ref{cor: infinite UPF} shows unique prime factorization for infinite tensor products of biexact (in the sense of \cite{DP22}) factors, eliminating the W*CBAP condition in \cite{Iso19}.
	\item Corollary~\ref{cor: fundamental group} relates the fundamental group of a product of orbit equivalence relations arising from actions of biexact groups to the product of their individual fundamental groups, 
		analogous to \cite[Theorem A]{Iso20} on the fundamental groups of ${\rm II}_1$ factors arising from weakly amenable and biexact groups.
\end{itemize}

Along with some of the above results on infinite direct sums/tensor products, another consequence of our techniques is the following measure equivalence rigidity for infinite direct sums of biexact groups, which may be seen as an infinite version of \cite[Theorem 4]{Sa09}.

\begin{main}\label{thm: infinite ME}
Let $\{\Gamma_n\}_{n\in \N}$ and $\{\Lambda_n\}_{n\in \N}$ be families of nonamenable discrete groups
	such that $\oplus_{n\in \N}\Gamma_n$ is measure equivalent to $\oplus_{n\in \N} \Lambda_n$.

If $\Gamma_n$ and $\Lambda_n$ are biexact for all $n\ge 1$,
	then $\Gamma_n$ is measure equivalent to $\Lambda_n$, up to a permutation of indices. 
\end{main}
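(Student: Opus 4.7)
The plan is to reduce to an orbit-equivalence setting via Furman's theorem and then establish the following infinite analogue of Theorem~\ref{theorem.upf.biexact}:

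$(*)$ \emph{If $\oplus_{n \in \N} \Gamma_n$ is measure equivalent to a product $\Lambda_1 \times \Lambda_2$ of i.c.c.\ groups with each $\Gamma_n$ nonamenable biexact, then there is a partition $\N = T_1 \sqcup T_2$ such that $\Lambda_i$ is measure equivalent to $\oplus_{n \in T_i} \Gamma_n$.}

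Granting $(*)$, Theorem~\ref{thm: infinite ME} follows by iteratively writing $\oplus_k \Lambda_k = \Lambda_m \times \oplus_{k \neq m} \Lambda_k$ and invoking $(*)$ to produce pairwise disjoint sets $T_m \subset \N$ with $\Lambda_m$ measure equivalent to $\oplus_{n \in T_m} \Gamma_n$. If $|T_m| \geq 2$, write $\oplus_{n \in T_m} \Gamma_n = \Gamma_{n_0} \times \oplus_{n \in T_m \setminus \{n_0\}} \Gamma_n$ as a product of two nonamenable groups and apply Sako's \cite[Theorem~4]{Sa09} to the single biexact group $\Lambda_m$ against this 2-factor decomposition, obtaining $1 = 2$, a contradiction. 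Thus $|T_m| = 1$, yielding an injection $\sigma : \N \to \N$ with $\Lambda_m$ measure equivalent to $\Gamma_{\sigma(m)}$; the symmetric iteration with the roles of $\{\Gamma_n\}$ and $\{\Lambda_n\}$ swapped provides an injection in the opposite direction, so $\sigma$ is a bijection.

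To prove $(*)$, use Furman's theorem to arrange (after amplification) $M = L^\infty(X) \rtimes \oplus_n \Gamma_n = L^\infty(Y) \rtimes (\Lambda_1 \times \Lambda_2)$ and set $A_n := L^\infty(X) \rtimes \Gamma_{\N \setminus \{n\}}$. For each $n$, apply Theorem~\ref{thm: rel solid} twice with biexact group $\Gamma_n$, $\Sigma = \Gamma_{\N \setminus \{n\}}$, and successively $\Delta = \Lambda_1$ and $\Delta = \Lambda_2$. Since nonamenable biexact groups have finite center, we may assume each $\Lambda_i$ is i.c.c., so $C_\Lambda(\Lambda_i) = \Lambda_{3-i}$, and the two dichotomies read
\begin{align*}
 &L^\infty(Y) \rtimes \Lambda_1 \prec_M A_n \quad\text{or}\quad L^\infty(Y) \rtimes \Lambda_2 \text{ is amenable relative to } A_n, \\
 &L^\infty(Y) \rtimes \Lambda_2 \prec_M A_n \quad\text{or}\quad L^\infty(Y) \rtimes \Lambda_1 \text{ is amenable relative to } A_n.
\end{align*}
Since intertwining implies relative amenability, if both intertwining alternatives hold, then both $L^\infty(Y) \rtimes \Lambda_i$ are amenable relative to $A_n$; by the standard join lemma for commuting relatively-amenable subalgebras, $M$ itself is then amenable relative to $A_n$, forcing the free action $\Gamma_n \curvearrowright A_n$ to be amenable and contradicting the nonamenability of $\Gamma_n$. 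The same contradiction arises when both relative-amenability alternatives hold. Hence exactly one of $L^\infty(Y) \rtimes \Lambda_i \prec_M A_n$ holds for each $n$, giving a partition $\N = T_1 \sqcup T_2$. An intersection principle for intertwining applied across the commuting family $\{A_n\}$ yields $L^\infty(Y) \rtimes \Lambda_i \prec_M \bigcap_{n \notin T_i} A_n = L^\infty(X) \rtimes \Gamma_{T_i}$, from which the measure equivalence between $\Lambda_i$ and $\oplus_{n \in T_i} \Gamma_n$ is extracted via the standard ``Cartan-rich intertwining implies group-level ME'' argument (cf.\ \cite{Sa09, DHI16}).

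The main obstacle is the intersection principle for intertwining across infinitely many commuting factors: given $L^\infty(Y) \rtimes \Lambda_i \prec_M A_n$ for every $n$ in an infinite subset $S \subset \N$, extracting a single intertwining into $\bigcap_{n \in S} A_n = L^\infty(X) \rtimes \Gamma_{\N \setminus S}$ requires a careful transfinite iteration of the pairwise-intersection lemma, making essential use of the two commuting Cartan subalgebras $L^\infty(X)$ and $L^\infty(Y)$. This, together with the final conversion of the infinite-range intertwining into a genuine group-level measure equivalence, constitutes the core technical content of the argument.
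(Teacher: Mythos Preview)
Your approach has a genuine gap at the heart of $(*)$: the ``infinite intersection principle'' you invoke---that $L^\infty(Y)\rtimes\Lambda_i\prec_M A_n$ for all $n$ in an infinite set $S$ implies $L^\infty(Y)\rtimes\Lambda_i\prec_M \bigcap_{n\in S}A_n$---is not a known result and is false in general for Popa intertwining. The finite version (e.g.\ \cite[Lemma~2.8]{DHI16}) rests on a pigeonhole-type argument that does not survive passage to infinite families; a subalgebra can intertwine into every member of a descending chain without intertwining into the intersection. You correctly flag this as ``the core technical content,'' but it is left unresolved, and the presence of two Cartan subalgebras does not rescue it in any evident way. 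There is also a secondary issue: $(*)$ is stated for i.c.c.\ factors, yet the groups $\Lambda_m$ in the theorem are only assumed nonamenable and biexact, which does not force i.c.c.\ (consider $\F_2\times\Z/2\Z$), so your iteration step does not apply as written.

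The paper takes an entirely different route that sidesteps the infinite-intersection problem. Rather than proving $(*)$ and iterating, it establishes (Proposition~\ref{prop: infinite rel biexact} combined with \cite[Proposition~8.3]{DP22}) that the crossed product $M$ is biexact \emph{relative to the infinite family} $\{B_1\rtimes\Lambda_{\widehat j}\}_{j\in J}$. Applying the resulting relative solidity once to $L\Gamma_{\widehat i}$---whose relative commutant contains the nonamenable $L\Gamma_i$---produces directly a single index $\sigma(i)\in J$ with $L\Gamma_{\widehat i}\prec_M B_1\rtimes\Lambda_{\widehat{\sigma(i)}}$; no intersection is needed. Symmetrizing gives an injection $\rho:J\to I$, and the ME--intertwining dictionary (Proposition~\ref{lem: ME intertwining}) together with the elementary Lemmas~\ref{lem: commuting intertwine} and~\ref{lem: normal subgroup intertwine} shows that $\sigma$ and $\rho$ are mutually inverse and that $\Gamma_i\sim_{\rm ME}\Lambda_{\sigma(i)}$.
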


In particular, this shows that $\oplus_{n\in \N}\F_2$ is not measure equivalent to $\oplus_{n\in \N}\F_{\infty}$ by \cite{Ga00}.

Another corollary of Theorem~\ref{thm: infinite ME} is to the classification of wreath product groups up to measure equivalence. By using deformation/rigidity theory, Chifan, Popa and Sizemore provided in \cite[Theorem 0.1]{CPS11} rigidity results for wreath product groups extending results from \cite[Theorem 7]{Sa09}. 
Under general assumptions on the groups  $A,B,\Gamma$ and $\Lambda$ (e.g. all the groups are i.c.c. property (T) groups), they proved that if $A\wr \Gamma$ is measure equivalent to $B\wr \Lambda$, then $A^{(\Gamma)}$ is measure equivalent to $B^{(\Lambda)}$ and $\Gamma$ is measure equivalent to $\Lambda$. Our next result complements the previous wreath product rigidity results and shows in addition that the groups $A$ and $B$ are measure equivalent.

\begin{mcor}\label{cor.ME.wreathproduct}
Let $A$ and $B$ be nonamenable biexact groups and $\Gamma$, $\Lambda$ nonamenable hyperbolic groups.

If $A\wr\Gamma$ is measure equivalent to $B\wr\Lambda$, then $\Gamma$ is measure equivalent to $\Lambda$ 
	and $A$ is measure equivalent to $B$.
\end{mcor}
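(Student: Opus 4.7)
The strategy is to apply the wreath product measure equivalence rigidity of \cite{CPS11} and then combine it with our Theorem \ref{thm: infinite ME} to obtain the additional conclusion that $A$ and $B$ are measure equivalent.

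First, since $A,B$ are nonamenable biexact and $\Gamma,\Lambda$ are nonamenable hyperbolic, I would invoke \cite[Theorem 0.1]{CPS11} applied to the assumed measure equivalence $A\wr\Gamma \sim_{\mathrm{ME}} B\wr\Lambda$. Their result provides both halves of the expected wreath product decomposition at the level of measure equivalence: namely, that $\Gamma$ is measure equivalent to $\Lambda$, and that the base direct sums $A^{(\Gamma)}$ and $B^{(\Lambda)}$ are measure equivalent as well. This step isolates the normal direct-sum subgroup of each wreath product using the rigidity of the acting group.

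Second, since any nonamenable hyperbolic group is countably infinite, there are group isomorphisms $A^{(\Gamma)} \cong \bigoplus_{n\in\N} A$ and $B^{(\Lambda)} \cong \bigoplus_{n\in\N} B$. The measure equivalence obtained in the first step therefore becomes a measure equivalence between two countable direct sums of nonamenable biexact groups. Applying Theorem \ref{thm: infinite ME} to this situation produces, up to a permutation of indices, a measure equivalence between each summand pair; since all summands on the left equal $A$ and all summands on the right equal $B$, this yields $A \sim_{\mathrm{ME}} B$, which together with $\Gamma \sim_{\mathrm{ME}} \Lambda$ completes the proof.

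The main difficulty I anticipate lies in Step 1: ensuring that the \cite{CPS11} wreath product rigidity actually applies to nonamenable hyperbolic (not necessarily property (T)) acting groups together with biexact (not necessarily weakly amenable) base groups. If the published hypotheses of \cite{CPS11} do not literally cover our setting, one would need to revisit their argument and substitute our Theorem \ref{thm: rel solid} for the relevant rigidity input, which is tailored precisely to biexact settings without weak amenability and should permit the detection of the Bernoulli-type normal subgroup $A^{(\Gamma)}$ inside the ME coupling through a centralizer analysis.
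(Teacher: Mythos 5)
Your second step is exactly the paper's closing move and is fine: once one knows $A^{(\Gamma)}\sim_{\rm ME}B^{(\Lambda)}$, writing $A^{(\Gamma)}\cong\oplus_{n\in\N}A$ and $B^{(\Lambda)}\cong\oplus_{n\in\N}B$ and applying Theorem~\ref{thm: infinite ME} gives $A\sim_{\rm ME}B$, since every summand on one side is $A$ and on the other is $B$. The genuine gap is your Step 1. The hypotheses of \cite[Theorem 0.1]{CPS11} (e.g.\ all groups i.c.c.\ with property (T)) are not satisfied here: nonamenable hyperbolic groups such as free groups fail property (T), and the biexact bases $A,B$ need not satisfy the conditions of \cite{CPS11} either. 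Indeed, the whole point of this corollary is that it \emph{complements} \cite{CPS11} rather than follows from it, so you cannot import the decomposition $A^{(\Gamma)}\sim_{\rm ME}B^{(\Lambda)}$ and $\Gamma\sim_{\rm ME}\Lambda$ as a black box. Moreover, your proposed repair --- substituting Theorem~\ref{thm: rel solid} --- is misdirected: that theorem concerns a group measure equivalent to a \emph{product} $\Gamma\times\Sigma$ with $\Gamma$ biexact, whereas $B\wr\Lambda$ is not a product, and the task is to locate the base algebra inside $M=(L^\infty(Y)\rtimes B^{(\Lambda)})\rtimes\Lambda$; no centralizer in the wreath product plays the role of the factor $\Sigma$.

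What the paper actually does is exploit that $\Gamma$ and $\Lambda$ are hyperbolic, hence \emph{weakly amenable} and biexact, so the Popa--Vaes dichotomy \cite[Theorem 1.4]{PoVa14II} (in the form of \cite[Lemma 5.2]{KV15}) applies to the crossed product decomposition over the hyperbolic group: for each $i\in\Gamma$, either $L(A^{\hat i})\prec_M L^\infty(Y)\rtimes B^{(\Lambda)}$ or $L(A^{i})$ is amenable relative to $L^\infty(Y)\rtimes B^{(\Lambda)}$. This is precisely where hyperbolicity (rather than mere biexactness) of the acting groups is used, and it explains the asymmetry in the corollary's hypotheses. A case analysis then upgrades this to the mutual intertwinings $L^\infty(X)\rtimes A^{(\Gamma)}\prec_M L^\infty(Y)\rtimes B^{(\Lambda)}$ and vice versa: one combines projections via \cite[Proposition 4.4]{CDAD23B} in the intertwining case, and in the relative-amenability case uses closure under inductive limits \cite[Proposition 2.7]{DHI16} together with a second application of \cite[Theorem 1.4]{PoVa14II} to force either the desired intertwining or amenability of $\Lambda$, a contradiction. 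Only then does one pass to the measure-theoretic side: Proposition~\ref{lem: ME intertwining} converts the von Neumann intertwinings into $A^{(\Gamma)}\preccurlyeq_\Omega B^{(\Lambda)}$ and $B^{(\Lambda)}\preccurlyeq_\Omega A^{(\Gamma)}$, and Sako's Lemma~\ref{lem: normal subgroup intertwine} (mutual intertwining of normal subgroups gives measure equivalence of both the subgroups and the quotients) yields simultaneously $A^{(\Gamma)}\sim_{\rm ME}B^{(\Lambda)}$ and $\Gamma\sim_{\rm ME}\Lambda$, after which your Step 2 concludes. Without this replacement argument, your proof does not get off the ground.
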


To put our result into a better perspective, note that
Corollary \ref{cor.ME.wreathproduct} can  be contrasted with Tucker-Drob and Wr\'obel's measure equivalence flexibility reults for wreath product groups \cite{TDW24}. They showed in particular that if $A$ and $B$ are measure equivalent groups, then $A\wr\mathbb F_n$  is measure equivalent to $B\wr\mathbb F_n$ for any $n\ge 2$. Finally, Ioana and Tucker-Drob proved in particular that Corollary \ref{cor.ME.wreathproduct} holds whenever the groups $A,B,\Gamma,\Lambda$ are i.c.c. hyperbolic  \cite{ITD25}.

{\bf Organization of the paper.}
Besides the introduction, the paper has five other sections. In Section \ref{section.preliminaries} we have preliminaries, which shows in particular that 
 intertwining via measure equivalent couplings in the sense of Sako \cite{Sa09} is equivalent to
 intertwining  in the sense of Popa \cite{Po06A}. This is a key observation for proving measure equivalence of groups. In Section
\ref{sec: biexact} we recall the notion of biexact groups and von Neumann algebras \cite{BrOz08,DP22} and prove relative biexactness for infinite direct sum groups. In Section \ref{sec: rel solid} we prove relative solidity results such as Theorem \ref{thm: rel solid} and all its technical variations that are needed for Theorem \ref{theorem.upf.biexact}. The arguments used here are based on the framework developed in \cite{DP22} and make crucial use of certain approximation property coming from the measure equivalence assumption (see Remark~\ref{rem: technical}). Finally, in Sections \ref{sec: proof upf} and \ref{sec: infinite direct sum} we prove the remaining main results. We mention that the proof of Theorem \ref{theorem.upf.biexact} is inspired by the approach of \cite{Dri23} by relying on  the flip automorphism method from \cite{IM22}.

\textbf{Acknowledgements.} 
CD would like to express his gratitude to Stuart White for hosting his visit to Oxford University, during which the current work was started.
We are grateful to Stefaan Vaes for inspiring conversations and to Ionut Chifan for providing us the class of biexact groups from Example \ref{examples}.  We are thankful to Adrian Ioana and Stefaan Vaes for helpful comments that improved the exposition of the paper.
DD was partially supported by Engineering and Physical Sciences Research Council grant EP/X026647/1. 


\section{Preliminaries}\label{section.preliminaries}

\subsection{Terminology}

In this paper we consider {\it tracial von Neumann algebras} $(M,\tau)$, i.e. von Neumann algebras $M$ equipped with a faithful normal tracial state $\tau: M\to\mathbb C.$ This induces a norm on $M$ by the formula $\|x\|_2=\tau(x^*x)^{1/2},$ for all $x\in M$. We will always assume that $M$ is a {\it separable} von Neumann algebra, i.e. the $\|\cdot\|_2$-completion of $M$ denoted by $L^2(M)$ is separable as a Hilbert space.
We denote by $\mathcal Z(M)$ the {\it center} of $M$ and by $\mathcal U(M)$ its {\it unitary group}. 

All inclusions $P\subset M$ of von Neumann algebras are assumed unital. We denote by $E_{P}:M\to P$ the unique $\tau$-preserving {\it conditional expectation} from $M$ onto $P$, by $e_P:L^2(M)\to L^2(P)$ the orthogonal projection onto $L^2(P)$ and by $\langle M,e_P\rangle$ the Jones' basic construction of $P\subset M$. We also denote by $P'\cap M=\{x\in M|xy=yx, \text{ for all } y\in P\}$ the {\it relative commutant} of $P$ in $M$ and by $\mathcal N_{M}(P)=\{u\in\mathcal U(M)|uPu^*=P\}$ the {\it normalizer} of $P$ in $M$.  We say that $P$ is {\it regular} in $M$ if the von Neumann algebra generated by $\mathcal N_M(P)$ equals $M$. 

The {\it amplification} of a II$_1$ factor $(M,\tau)$ by a number $t>0$ is defined to be $M^t=p(\mathbb B(\ell^2(\mathbb Z))\bar\otimes M)p$, for a projection $p\in \mathbb B(\ell^2(\mathbb Z))\bar\otimes M$ satisfying $($Tr$\otimes\tau)(p)=t$. Here Tr denotes the usual trace on $\mathbb B(\ell^2(\mathbb Z))$. Since $M$ is a II$_1$ factor, $M^t$ is well defined. Note that if $M=P_1\bar\otimes P_2$, for some II$_1$ factors $P_1$ and $P_2$, then there exists a natural identification $M=P_1^t\bar\otimes P_2^{1/t}$, for every $t>0.$


Finally, let $I$ be a set. For any subset $J\subset I$ we denote its complement by $\widehat J=I\setminus J$. If $S=\{i\},$ we will simply write $\hat i$ instead of $\widehat {\{i\}}$. Also, given
any direct sum group $G=\oplus_{i\in I} G_i$ and any tensor product $\bar\otimes M_{i\in I}$, we will denote their subproduct supported on $J$ by $G_J=\oplus_{j\in J} G_j$ and $M_J=\bar\otimes_{j\in J} M_j$, respectively.

\subsection {Intertwining-by-bimodules} We next recall from  \cite [Theorem 2.1 and Corollary 2.3]{Po06B} the powerful {\it intertwining-by-bimodules} technique of Sorin Popa.

\begin {thm}[\!\!\cite{Po06B}]\label{corner} Let $(M,\tau)$ be a tracial von Neumann algebra and let $P\subset pMp, Q\subset qMq$ be von Neumann subalgebras. Let $\mathcal U\subset\mathcal U(P)$ be a subgroup such that $\mathcal U''=P$.

Then the following are equivalent:

\begin{enumerate}

\item There exist projections $p_0\in P, q_0\in Q$, a $*$-homomorphism $\theta:p_0Pp_0\rightarrow q_0Qq_0$  and a non-zero partial isometry $v\in q_0Mp_0$ satisfying $\theta(x)v=vx$, for all $x\in p_0Pp_0$.

\item There is no sequence $(u_n)_n\subset\mathcal U$ satisfying $\|E_Q(xu_ny)\|_2\rightarrow 0$, for all $x,y\in M$.

\end{enumerate}
\end{thm}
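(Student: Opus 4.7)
The plan is to work inside the Jones basic construction $\langle M,e_Q\rangle$ equipped with its canonical semifinite trace $\Tr$ (normalized so that $\Tr(xe_Qy)=\tau(xy)$ for $x,y\in M$), and to prove the two implications separately. The forward direction will be a direct computation exploiting the intertwining relation, while the reverse direction is the substantive one: a convexity argument in $L^2(\langle M,e_Q\rangle,\Tr)$ produces a nonzero, finite-trace element of $P'\cap\langle M,e_Q\rangle$, from which one must extract the partial isometry $v$.

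For (1) $\Rightarrow$ (2), I would argue by contradiction. Assume witnesses $v,\theta,p_0,q_0$ are given together with a sequence $(u_n)\subset\cU$ satisfying $\|E_Q(xu_ny)\|_2\to 0$ for all $x,y\in M$. Taking adjoints of $\theta(x)v=vx$ gives $v^*\theta(y)=yv^*$ for $y\in p_0Pp_0$, so testing the vanishing hypothesis at $x=v^*$ and $y=v$ should produce, after cutting by $p_0$ on both sides and using the intertwining, an expression for $E_Q(v^*u_nv)$ that factors through $\theta(p_0u_np_0)\cdot v^*v$. Since $\theta(p_0u_np_0)\in Q$ and $v^*v$ is a nonzero projection, this will give a uniform lower bound of the form $\|E_Q(v^*u_nv)\|_2\ge \tau(v^*v)>0$, contradicting the assumption.

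For (2) $\Rightarrow$ (1), the plan is to consider
$$\cK=\overline{\operatorname{conv}}^{\|\cdot\|_{2,\Tr}}\{ue_Qu^*:u\in\cU\}\subset L^2(\langle M,e_Q\rangle,\Tr),$$
which is a nonempty, bounded, closed, convex subset of a Hilbert space and therefore admits a unique element $\eta_0$ of minimal norm. As $\cK$ is invariant under $\eta\mapsto u\eta u^*$ for $u\in\cU$, uniqueness forces $u\eta_0u^*=\eta_0$ for every $u\in\cU$, so $\eta_0$ commutes with $\cU''=P$. The hypothesis (2) is used to rule out $\eta_0=0$: if $0\in\cK$, Mazur's lemma provides convex combinations of the $ue_Qu^*$ that converge to $0$ in $\|\cdot\|_{2,\Tr}$, and combining $\Tr(xe_Qy)=\tau(xy)$ with a diagonal argument over a countable $\|\cdot\|_2$-dense subset of $M$ yields a single sequence $(u_n)\subset\cU$ with $\|E_Q(xu_ny)\|_2\to 0$ for all $x,y\in M$, contradicting (2). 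Hence $\eta_0\neq 0$, and it is represented by a positive operator $T\in P'\cap\langle M,e_Q\rangle$ of finite $\Tr$-trace; for small enough $\eps>0$, the spectral projection $f=\mathbf{1}_{[\eps,\infty)}(T)\in P'\cap\langle M,e_Q\rangle$ is a nonzero projection with $\Tr(f)<\infty$.

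The remaining step, which I expect to be the main technical obstacle, is the conversion of the abstract finite-trace projection $f\in P'\cap\langle M,e_Q\rangle$ into the concrete data $(p_0,q_0,\theta,v)$ of (1). Here I would invoke the identification of finite-$\Tr$ projections in $\langle M,e_Q\rangle$ with finitely generated right Hilbert $Q$-submodules of $L^2(M)$ via a Pimsner-Popa basis: the subspace $fL^2(M)$ is such a submodule, and it carries a commuting left $P$-action. After a suitable cut-down by a projection $p_0\in P$ and an isometric embedding into a finite amplification $L^2(M)\otimes\C^n$, this left action yields a normal $*$-homomorphism $\theta:p_0Pp_0\to q_0Qq_0$ for some $q_0\in Q$, and a polar-decomposition argument applied to a nonzero vector in $fL^2(M)$ produces a partial isometry $v\in q_0Mp_0$ implementing $\theta(x)v=vx$. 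Book-keeping with the projections $p_0,q_0$ and verifying that $\theta$ is genuinely a $*$-homomorphism (rather than merely a bounded left action on the right $Q$-module) is where the bulk of the technical care is needed.
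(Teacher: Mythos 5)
A preliminary remark: the paper does not prove this statement at all --- it is quoted verbatim from [Po06B] --- so the only meaningful comparison is with Popa's standard proof, and your sketch of (2) $\Rightarrow$ (1) reproduces that proof faithfully: the $\|\cdot\|_{2,\Tr}$-closed convex hull of $\{ue_Qu^*:u\in\cU\}$, the minimal-norm element fixed under $\Ad(u)$ and hence commuting with $\cU''=P$, the spectral cut-off producing a nonzero projection $f\in P'\cap p\langle M,e_Q\rangle p$ with $\Tr(f)<\infty$, and the conversion of the right $Q$-module $fL^2(M)$ with its commuting left $P$-action into the data $(p_0,q_0,\theta,v)$ via a Pimsner--Popa basis and polar decomposition. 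The technical caveats you flag there are the right ones; one you gloss over is that the module argument naturally lands $\theta$ in a corner of $\M_n(\C)\otimes Q$, and reducing to $q_0Qq_0$ with $q_0\in Q$, as the statement demands, is an additional (standard but nonempty) step.

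The genuine gap is in your (1) $\Rightarrow$ (2). First, the orientation is wrong: with $x=v^*$, $y=v$ one gets $v^*u_nv=p_0v^*(q_0u_nq_0)vp_0$, which admits no simplification since $q_0u_nq_0\notin Q$; the usable identity is $vu_nv^*=v(p_0u_np_0)v^*=\theta(p_0u_np_0)vv^*$, giving $E_Q(vu_nv^*)=\theta(p_0u_np_0)E_Q(vv^*)$ (note also that your expression $\theta(p_0u_np_0)\cdot v^*v$ does not typecheck: $\theta(p_0u_np_0)\in q_0Qq_0$ while $v^*v\in p_0Mp_0$). Much more seriously, the claimed uniform lower bound $\|E_Q(vu_nv^*)\|_2\geq\tau(v^*v)>0$ is simply false: the compression $p_0u_np_0$ can be arbitrarily small, even zero, for unitaries of $P$. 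For instance $M=P=Q=\M_2(\C)$, $p_0=v=e_{11}$, $\theta=\id$ on $\C e_{11}$, and $u=e_{12}+e_{21}$ give $p_0up_0=0$, so $E_Q(vuv^*)=0$ even though condition (1) holds; no single fixed pair $(x,y)$ built from $v$ can witness the contradiction. The correct route --- Popa's, and one that uses exactly the machinery you already set up for the converse --- is to first derive from (1) a nonzero positive $T\in P'\cap p\langle M,e_Q\rangle p$ with $\Tr(T)<\infty$ (e.g.\ starting from the projection onto $\overline{v^*Q}$, which commutes with $p_0Pp_0$ since $p_0Pp_0\,v^*\subset v^*Q$, and then upgrading from the corner to all of $P$ by central-support/partial-isometry bookkeeping). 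Since $T$ commutes with $\cU''=P$, one has $u_nTu_n^*=T$ for all $n$, whereas the hypothesis $\|E_Q(xu_ny)\|_2\to0$ for all $x,y\in M$ forces $\Tr(u_nSu_n^*S')\to0$ for all $S,S'\in\operatorname{span}\{ae_Qb: a,b\in M\}$, because $\Tr(u_nae_Qbu_n^*ce_Qd)=\tau(u_naE_Q(bu_n^*c)d)$; approximating $T$ in $\|\cdot\|_{2,\Tr}$ by such $S$ then yields $\|T\|_{2,\Tr}^2=\lim_n\Tr(u_nTu_n^*T)=0$, a contradiction. So Part 2 of your proposal is essentially the actual proof, but Part 1 must be rerouted through this finite-trace element rather than through a single intertwined matrix coefficient.
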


If one of the equivalent conditions of Theorem \ref{corner} holds true, we write $P\prec_{M}Q$, and say that {\it a corner of $P$ embeds into $Q$ inside $M$.}
Throughout the paper we will use the following notation.
\begin{itemize}
\item If $Pp'\prec_{M}Q$ for any non-zero projection $p'\in P'\cap pMp$, then we write $P\prec^{s}_{M}Q$.
\item If $P\prec_{M}Qq'$ for any non-zero projection $q'\in Q'\cap qMq$, then we write $P\prec^{s'}_{M}Q$.
\end{itemize}

We first recall the following intertwining result which is essentially contained in \cite[Section 3]{CD-AD23A}.

\begin{lem}\label{lemma.trick}
Let $\Lambda$ be a nontrivial countable i.c.c. group and denote $M=L(\Lambda)$. Let $\Delta:M\to M\ovt M$ be the $*$-homomorphism given by $\Delta(v_\lambda)=v_\lambda\otimes v_\lambda$, for any $\lambda\in \Lambda$.

Let $P\subset pMp$ be a regular von Neumann subalgebra and $\Sigma<\Lambda$ an infinite subgroup satisfying $P\prec_M L(\Sigma).$ If $Q\subset qMq$ is a von Neumann subalgebra such that $L(\Sigma)\prec_M Q$, then $\Delta(P)\prec_{M\ovt M}M\ovt Q$.

\end{lem}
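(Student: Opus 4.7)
My plan is to produce a nonzero, finitely generated right $M\ovt Q$-submodule of $L^2(M\ovt M)$ that is invariant under left multiplication by $\Delta(P)$. By the bimodule form of Popa's intertwining (Theorem~\ref{corner}), this yields $\Delta(P)\prec_{M\ovt M}M\ovt Q$. The construction will glue the bimodule witnesses of the two hypotheses using the comultiplication $\Delta$.

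I will first extract the bimodule data. From $P\prec_M L(\Sigma)$, a Pimsner-Popa basis gives vectors $\xi_1,\dots,\xi_m\in pM$ with $p\xi_i=\sum_k\xi_k\,\alpha_{ik}(p)$ for each $p\in P$, where $\alpha_{ik}(p):=E_{L(\Sigma)}(\xi_k^*p\xi_i)\in L(\Sigma)$. Similarly, $L(\Sigma)\prec_M Q$ produces $\eta_1,\dots,\eta_n\in M$ and bounded maps $\beta_{jl}\colon L(\Sigma)\to Q$ with $\sigma\eta_j=\sum_l\eta_l\,\beta_{jl}(\sigma)$ for $\sigma\in L(\Sigma)$. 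My candidate generators will be the finitely many vectors $\Delta(\xi_i)(1\otimes\eta_j)\in M\ovt M$; a short computation in the orthonormal basis $\{v_\lambda\otimes v_{\lambda\mu}\}_{\lambda,\mu\in\Lambda}$ of $L^2(M\ovt M)$ gives $\|\Delta(\xi_i)(1\otimes\eta_j)\|_2=\|\xi_i\|_2\|\eta_j\|_2$, so they are nonzero.

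The main computation will then show
\begin{align*}
\Delta(p)\,\Delta(\xi_i)(1\otimes\eta_j)
&=\sum_k\Delta(\xi_k)\,\Delta(\alpha_{ik}(p))(1\otimes\eta_j)\\
&=\sum_{k,l}\Delta(\xi_k)(1\otimes\eta_l)\cdot(\id\otimes\beta_{jl})\bigl(\Delta(\alpha_{ik}(p))\bigr).
\end{align*}
The second equality amounts to the identity $\Delta(\alpha)(1\otimes\eta_j)=\sum_l(1\otimes\eta_l)\,(\id\otimes\beta_{jl})(\Delta(\alpha))$ for $\alpha\in L(\Sigma)$, which follows by expanding $\alpha=\sum_\tau c_\tau v_\tau$, noting that $\Delta(v_\tau)=v_\tau\otimes v_\tau$, and applying $v_\tau\eta_j=\sum_l\eta_l\,\beta_{jl}(v_\tau)$ in each Fourier mode. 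Since $(\id\otimes\beta_{jl})(\Delta(\alpha_{ik}(p)))\in M\ovt Q$, the right $M\ovt Q$-module generated by the finite family $\{\Delta(\xi_k)(1\otimes\eta_l)\}_{k,l}$ is left $\Delta(P)$-invariant, completing the construction.

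The main technical point I expect to have to check carefully is boundedness: ensuring that $\alpha_{ik}\colon P\to L(\Sigma)$ and $\beta_{jl}\colon L(\Sigma)\to Q$ take values in the von Neumann algebras themselves (rather than in their $L^2$-completions), and that $\id\otimes\beta_{jl}$ extends normally to a bounded map $M\ovt L(\Sigma)\to M\ovt Q$. These properties are standard consequences of working with Pimsner-Popa bases, since both maps are then compressions of conditional expectations. I also note that the regularity hypothesis on $P$ does not appear to enter the argument sketched above, which suggests that it is included for compatibility with subsequent applications of the lemma rather than being essential to the present statement.
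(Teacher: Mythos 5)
Your argument is correct, and it takes a genuinely different route from the paper. The paper's proof is indirect: it passes to a Bernoulli crossed product $\mathcal M=B\rtimes\Lambda$, uses mixingness of the $\Sigma$-action to upgrade $P\prec_M L(\Sigma)$ to the strong intertwining $P\prec^{s'}_{\mathcal M}B\rtimes\Sigma$ (this is where infiniteness of $\Sigma$ enters, via $(B\rtimes\Sigma)'\cap\mathcal M=\C 1$), then pushes the intertwining through $\Delta$ and composes with $L(\Sigma)\prec_M Q$ by a chain of cited lemmas from \cite{Dr19} and \cite{CD-AD23A}, finally descending from $\mathcal M\ovt\mathcal M$ back to $M\ovt M$. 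You instead compose the two intertwinings directly through the comultiplication: since $\Delta(L(\Sigma))\subset M\ovt L(\Sigma)$, the normal map $\id\otimes\beta$ transports the second intertwining in the second leg, while the first leg retains a faithful copy of everything; your identity $\|\Delta(\xi)(1\otimes\eta)\|_2=\|\xi\|_2\|\eta\|_2$ is precisely what excludes the corner mismatch that makes naive transitivity of $\prec$ fail, and it guarantees that the finitely generated right $M\ovt Q$-module you construct is nonzero. Your proof is more elementary and self-contained, and your closing observation is accurate: regularity of $P$ is not needed for this route (nor, in fact, are the i.c.c.\ assumption on $\Lambda$ or infiniteness of $\Sigma$, which the paper's method does use); what the paper's route buys is the stronger $\prec^{s'}$-intertwinings produced along the way by the cited machinery.

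One repair is needed in the extraction of the witnesses. As literally stated, a \emph{bounded} Pimsner--Popa basis satisfying $p\xi_i=\sum_k\xi_k E_{L(\Sigma)}(\xi_k^*p\xi_i)$ for all $p\in P$ need not exist: orthonormalizing a finitely generated right $L(\Sigma)$-submodule of $L^2(M)$ involves inverse square roots that may be unbounded, so the basis vectors are a priori only $L^2$-vectors and the maps $\alpha_{ik}$ only $L^1$-valued; the justification that boundedness holds because these maps are ``compressions of conditional expectations'' is not quite right. The standard fix is to invoke Theorem~\ref{corner} in its matrix-amplified, partial-isometry form: there exist $m\in\N$, a projection $q_1\in \M_m(\C)\otimes L(\Sigma)$, a normal $*$-homomorphism $\theta:P\to q_1(\M_m(\C)\otimes L(\Sigma))q_1$ and a nonzero row partial isometry $v=(v_1,\dots,v_m)$ with entries in $pM$ such that $xv=v\theta(x)$ for all $x\in P$, and likewise $\beta:L(\Sigma)\to q_2(\M_n(\C)\otimes Q)q_2$ and $w=(w_1,\dots,w_n)$ with $\sigma w=w\beta(\sigma)$ for all $\sigma\in L(\Sigma)$. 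Taking $\xi=v$, $\alpha=\theta$, $\eta=w$, your computation goes through verbatim: $\id\otimes\beta$ is normal since $\beta$ is, the key identity $\zeta(1\otimes w_j)=\sum_l(1\otimes w_l)\,[(\id\otimes\beta)(\zeta)]_{lj}$ holds on $M\ot_{\alg}\C\Sigma$ and extends to $M\ovt L(\Sigma)$ by normality, the coefficients $[(\id\otimes\beta)(\Delta(\theta(x)_{ki}))]_{lj}$ lie in $M\ovt Q$, and your norm identity shows $\Delta(v_k)(1\otimes w_l)\neq 0$, so the closed right module they generate is a nonzero $\Delta(P)$-$(M\ovt Q)$-subbimodule of $\Delta(p)L^2(M\ovt M)$ with finite right dimension, witnessing $\Delta(P)\prec_{M\ovt M}M\ovt Q$.
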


\begin{proof}
We start the proof by considering   a Bernoulli action $\Lambda\curvearrowright B$ with tracial abelian base and denote $\mathcal M=B\rtimes\Lambda$.
We extend $\Delta$ naturally to a $*$-homomorphism $\Delta: \mathcal M\to \mathcal M\ovt \mathcal M$ by letting $\Delta(bv_\lambda)=bv_\lambda\otimes v_\lambda$ for all $b\in B$ and $\lambda\in\Lambda$.
The assumption implies that $P\prec_{\mathcal M}^{s'} B\rtimes\Sigma$. By using \cite[Lemma 2.3]{Dr19} it follows that  $\Delta(P)\prec_{\mathcal M\ovt\mathcal M}^{s'} \Delta(L(\Sigma))$. 
We further deduce that $\Delta(P)\prec_{\mathcal M\ovt\mathcal M}^{s'}\mathcal M\ovt L(\Sigma)$. By using  $L(\Sigma)\prec_M Q$,  \cite[Lemma 2.4]{Dr19} implies that $\Delta(P)\prec_{\mathcal M\ovt\mathcal M}\mathcal M\ovt Q$. By \cite[Lemma 3.3]{CD-AD23A} it follows that $\Delta(P)\prec_{\mathcal M\ovt\mathcal M} Q\ovt Q$. By \cite[Lemma 3.4]{CD-AD23A} it follows that $\Delta(P)\prec_{ M\ovt M} M\ovt Q$.
    \end{proof}

\subsection{Relative amenability}
A tracial von Neumann algebra $(M,\tau)$ is {\it amenable} if there is a positive linear functional $\Phi:\mathbb B(L^2(M))\to\mathbb C$ such that $\Phi_{|M}=\tau$ and $\Phi$ is $M$-{\it central}, meaning $\Phi(xT)=\Phi(Tx),$ for all $x\in M$ and $T\in \mathbb B(L^2(M))$. By Connes' celebrated work \cite{Co76}, it follows that $M$ is amenable if and only if $M$ is approximately finite dimensional.
 
We continue by recalling the notion of relative amenability introduced by 
Ozawa and Popa in \cite{OzPo10I}. Let $(M,\tau)$ be a tracial von Neumann algebra. Let $p\in M$ be a projection and $P\subset pMp,Q\subset M$ be von Neumann subalgebras. Following \cite[Definition 2.2]{OzPo10I}, we say that $P$ is {\it amenable relative to $Q$ inside $M$} if there exists a positive linear functional $\Phi:p\langle M,e_Q\rangle p\to\mathbb C$ such that $\Phi_{|pMp}=\tau$ and $\Phi$ is $P$-central. 

The proof of the following lemma is almost identical to the proof of \cite[Proposition 2.12]{CDD25} and hence we omit it.
\begin{lem}\label{lem: commultiplication and rel amen}
Let $\Gamma$ be a discrete group, $\Lambda<\Gamma$ a subgroup and $M$ a tracial von Neumann algebra with $\Gamma\actson M$ a trace preserving action.
Denote by $\Delta: M\to M\ovt L(\Gamma)$ the comultiplication map.
Then a von Neumann subalgebra $B\subset M\rtimes\Gamma$ is amenable relative to $M\rtimes\Lambda$ if and only if $\Delta(B)$ is amenable relative to $M\ovt L(\Lambda)$.
\end{lem}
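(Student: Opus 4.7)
The plan is to follow the standard Popa-style comultiplication argument, as spelled out in \cite[Proposition 2.12]{CDD25}. The main tool is the Ozawa--Popa characterization of relative amenability via the Jones basic construction: a subalgebra $P \subset N$ is amenable relative to $Q \subset N$ if and only if there exists a net of unit vectors $\xi_n \in L^2\langle N, e_Q\rangle$ with $\langle x\xi_n, \xi_n\rangle \to \tau(x)$ for $x \in N$ and $\|b\xi_n - \xi_n b\| \to 0$ for $b \in P$. The strategy is to transfer such nets between the two basic constructions $\langle M\rtimes\Gamma, e_{M\rtimes\Lambda}\rangle$ and $\langle (M\rtimes\Gamma)\ovt L(\Gamma), e_{M\ovt L(\Lambda)}\rangle$ via the trace-preserving $*$-embedding given on generators by $\Delta(xu_g) = xu_g\otimes u_g$.

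First I would make the structure concrete by identifying $(M\rtimes\Gamma)\ovt L(\Gamma)$ with the crossed product $M\rtimes(\Gamma\times\Gamma)$, where $\Gamma\times\Gamma$ acts on $M$ only through the first coordinate; under this identification, $\Delta$ becomes the canonical inclusion of $M\rtimes\Gamma$ as $M\rtimes\operatorname{diag}(\Gamma)$, and $M\ovt L(\Lambda)$ corresponds to the subalgebra associated to $\{e\}\times\Lambda$. For the forward direction, starting from an almost central net $(\xi_n)$ for $B$ over $\langle M\rtimes\Gamma, e_{M\rtimes\Lambda}\rangle$, I would construct an $M\rtimes\Gamma$-bimodular isometric embedding of $L^2\langle M\rtimes\Gamma, e_{M\rtimes\Lambda}\rangle$ into $L^2\langle (M\rtimes\Gamma)\ovt L(\Gamma), e_{M\ovt L(\Lambda)}\rangle$, where the latter carries the $\Delta(M\rtimes\Gamma)$-bimodule structure pulled back via $\Delta$; the image of $(\xi_n)$ under this embedding then witnesses relative amenability of $\Delta(B)$ to $M\ovt L(\Lambda)$.

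For the reverse direction, the trace-preserving conditional expectation from $(M\rtimes\Gamma)\ovt L(\Gamma)$ onto $\Delta(M\rtimes\Gamma)$ produces a norm-decreasing map between the corresponding basic constructions that allows pulling back an almost central net for $\Delta(B)$ to one for $B$. The main technical obstacle will be constructing the bimodular isometry of the forward direction and verifying that it intertwines the Jones projections $e_{M\rtimes\Lambda}$ and $e_{M\ovt L(\Lambda)}$ in the precise way needed to preserve both trace expectations and almost centrality. This requires a careful unpacking of the Hilbert-module decomposition along $(\Gamma\times\Gamma)/(\{e\}\times\Lambda)$ relative to $\Gamma/\Lambda$, and is exactly the routine computation carried out in \cite[Proposition 2.12]{CDD25}; once this compatibility is established, the transfer of almost central nets in both directions is immediate.
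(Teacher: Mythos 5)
There is a genuine gap, and it lies in your identification of the comultiplication side. You read the relative subalgebra as $M\ovt L(\Lambda)=M\rtimes(\{e\}\times\Lambda)$ with $M$ the \emph{core}, but the statement the paper actually uses (and that \cite[Proposition 2.12]{CDD25} proves) has the full crossed product in the first leg: $\tilde Q:=(M\rtimes\Gamma)\ovt L(\Lambda)=M\rtimes(\Gamma\times\Lambda)$ under your identification $(M\rtimes\Gamma)\ovt L(\Gamma)=M\rtimes(\Gamma\times\Gamma)$. (This is visible from how the lemma is invoked at the end of the proofs of Theorem~\ref{thm: rel solid in ME} and Theorem~\ref{cor: rel solid in ME}, where the relative algebra on the comultiplication side is always of the form $(\text{crossed product})\ovt L(\Lambda_2)$, never $(\text{core})\ovt L(\Lambda_2)$.) Under your reading the ``only if'' direction is simply false: take $M=\C$ and $\Lambda=\Gamma$ nonamenable; then $B=L(\Gamma)$ is trivially amenable relative to $L(\Gamma)$, while a $\Delta(L\Gamma)$-central state on $\langle L\Gamma\ovt L\Gamma, e_{1\ovt L\Gamma}\rangle=\B(\ell^2\Gamma)\ovt L\Gamma$ restricting to the trace would, upon compressing to $\B(\ell^2\Gamma)\ot 1$ and then to $\ell^\infty\Gamma$, yield a left-invariant mean on $\Gamma$. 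Correspondingly, the bimodular isometry you propose in the forward direction provably cannot exist in your picture: the diagonal $\Gamma$-action on $(\Gamma\times\Gamma)/(\{e\}\times\Lambda)\cong\Gamma\times(\Gamma/\Lambda)$ is free, whereas $\Gamma/\Lambda$ is quasi-regular with stabilizer $\Lambda$, so the two Hilbert-module decompositions you want to compare are equivariantly incompatible, and no amount of ``careful unpacking'' will produce the intertwiner.

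With the correct subalgebra $\tilde Q=(M\rtimes\Gamma)\ovt L(\Lambda)$ your plan does work and is essentially the paper's (i.e., CDD25's) route: since $E_{\tilde Q}\circ\Delta=\Delta\circ E_{M\rtimes\Lambda}$, the map $x\,e_{M\rtimes\Lambda}\,y\mapsto \Delta(x)\,e_{\tilde Q}\,\Delta(y)$ extends to a trace-compatible, $\Delta$-bimodular isometry between the basic constructions, and the ``if'' direction is then just restriction of a $\Delta(B)$-central state along this embedding. Even then, one point you gloss over in the forward direction: transporting the net $(\xi_n)$ only controls $\langle \tilde x\,\eta_n,\eta_n\rangle$ for $\tilde x\in\Delta(M\rtimes\Gamma)$, whereas the Ozawa--Popa criterion requires the limit state to restrict to the trace (or at least normally) on all of $(M\rtimes\Gamma)\ovt L(\Gamma)$; this is precisely what Lemma~\ref{lem: cond exp} (the equivariant expectation onto $M\ovt\ell^\infty(\Gamma/\Lambda)$) and Lemma~\ref{lem: central state} are designed to handle, and a complete write-up needs that ingredient in addition to the intertwining of the Jones projections.
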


\begin{lem}\label{lem: cond exp}
Given a discrete group $\Gamma$ and a tracial von Neumann algebra $(M,\tau)$, 
	let $\Gamma\actson^\sigma (M,\tau)$ be a trace preserving action.	
For a subgroup $\Lambda<\Gamma$,
	there exists a (semifinite) trace preserving conditional expectation $E: \langle M\rtimes\Gamma, e_{M\rtimes\Lambda}\rangle \to M\ovt \ell^\infty(\Gamma/\Lambda)$
	such that 
	$E_{\mid M\rtimes\Gamma}$ coincides with the conditional expectation from $M\rtimes\Gamma$ to $M$.

Moreover, for any $u\in \cU(M\rtimes\Gamma)$ for the form $u=\sum_{n\in \N}u_{t_n}p_n$, where $\{p_n\}_{n\in \N}$ is a partition of unity in $\cZ(M)$
	and $\{t_n\}_{n\in \N}\subset \Gamma$,
	we have $E$ is $\Ad(u)$-equivariant.
\end{lem}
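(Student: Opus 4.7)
The plan is to realize $M\ovt\ell^\infty(\Gamma/\Lambda)$ as an explicit semifinite subalgebra of $\langle M\rtimes\Gamma, e_{M\rtimes\Lambda}\rangle$ on which the canonical trace restricts to the standard one, and then invoke the existence of a unique trace-preserving normal conditional expectation. For each coset $c\in\Gamma/\Lambda$, set $f_c:=u_g e_{M\rtimes\Lambda} u_g^*$ for any $g\in c$; this is independent of the chosen representative because $u_\lambda\in M\rtimes\Lambda$ commutes with $e_{M\rtimes\Lambda}$ for $\lambda\in\Lambda$, and the family $\{f_c\}_c$ consists of pairwise orthogonal projections summing strongly to $1$. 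A short computation using the relation $m u_g = u_g \sigma_{g^{-1}}(m)$ together with the fact that $\sigma_{g^{-1}}(m)\in M\subset M\rtimes\Lambda$ commutes with $e_{M\rtimes\Lambda}$ shows that every $m\in M$ commutes with every $f_c$. It follows that $M$ and $\{f_c\}_c$ generate a von Neumann subalgebra of the basic construction naturally isomorphic to $M\ovt\ell^\infty(\Gamma/\Lambda)$ via $mf_c\leftrightarrow m\otimes\delta_c$.

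Next, I would invoke the canonical semifinite trace $\widehat{\Tr}$ on the basic construction, characterized by $\widehat{\Tr}(xe_{M\rtimes\Lambda}y)=\tau(xy)$ for $x,y\in M\rtimes\Gamma$. Evaluating on $mf_c$ gives $\widehat{\Tr}(mf_c)=\tau(m u_g u_g^*)=\tau(m)$, so $\widehat{\Tr}$ restricts to the standard semifinite trace $\tau\otimes\#$ on $M\ovt\ell^\infty(\Gamma/\Lambda)$. Standard semifinite theory then yields a unique normal $\widehat{\Tr}$-preserving conditional expectation
\[
E\colon\langle M\rtimes\Gamma, e_{M\rtimes\Lambda}\rangle \longrightarrow M\ovt\ell^\infty(\Gamma/\Lambda).
\]
To identify $E$ on $M\rtimes\Gamma$, I would pair against the total family $\{mf_c\}_{m\in M,\, c\in\Gamma/\Lambda}$: for any $x\in M\rtimes\Gamma$,
\[
\widehat{\Tr}\bigl(E(x)\cdot mf_c\bigr)=\widehat{\Tr}(xmf_c)=\tau(xm)=\tau(E_M(x)m)=\widehat{\Tr}\bigl((E_M(x)\otimes 1)\cdot mf_c\bigr),
\]
which forces $E(x)=E_M(x)\otimes 1$. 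Under the natural embedding $M\hookrightarrow M\ovt\ell^\infty(\Gamma/\Lambda)$, $m\mapsto m\otimes 1$, this is exactly the conditional expectation $E_M$.

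For the moreover statement, observe that since each $p_n$ is central in $M$ it commutes with all of $M\rtimes\Lambda$, hence also with every $f_c$. A direct computation using $u=\sum_n u_{t_n}p_n$, the orthogonality $p_np_{n'}=\delta_{nn'}p_n$, and the equivariance $u_{t_n}f_c u_{t_n}^*=f_{t_nc}$ yields
\[
u m u^* = \sum_n \sigma_{t_n}(p_n m)\in M,\qquad u f_c u^* = \sum_n \sigma_{t_n}(p_n)\,f_{t_n c}\in M\ovt\ell^\infty(\Gamma/\Lambda).
\]
Consequently $\Ad(u)$ restricts to a normal $*$-automorphism of $M\ovt\ell^\infty(\Gamma/\Lambda)$. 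Since $\Ad(u)$ also preserves $\widehat{\Tr}$ (as $u$ is a unitary in $M\rtimes\Gamma$), uniqueness of the trace-preserving conditional expectation onto $M\ovt\ell^\infty(\Gamma/\Lambda)$ forces $E\circ\Ad(u)=\Ad(u)\circ E$, as required. The only place where some care is needed is the bookkeeping for the explicit $\Ad(u)$-formula on the generators $f_c$; everything else is essentially formal once the right semifinite subalgebra has been exhibited.
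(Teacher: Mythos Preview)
Your proof is correct and takes a genuinely different route from the paper's. The paper constructs $E$ explicitly as the restriction of the ``diagonal compression'' map $\phi:\B(L^2M\otimes\ell^2\Gamma)\to\B(L^2M)\ovt\ell^\infty\Gamma$ defined by $\langle\phi(T)(t)\xi,\eta\rangle=\langle T(\xi\otimes\delta_t),\eta\otimes\delta_t\rangle$, then verifies by hand that its range lies in $M\ovt\ell^\infty(\Gamma/\Lambda)$, that it is trace preserving, and that it is $\Ad(u)$-equivariant via a direct coordinate computation. Your argument instead embeds $M\ovt\ell^\infty(\Gamma/\Lambda)$ inside the basic construction via the projections $f_c=u_ge_{M\rtimes\Lambda}u_g^*$, checks that the canonical trace restricts to the standard semifinite one, and then invokes the abstract existence and uniqueness of a trace-preserving normal conditional expectation onto a subalgebra on which the trace is semifinite. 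The payoff of your approach is that the $\Ad(u)$-equivariance becomes essentially free: once you observe that $\Ad(u)$ preserves both the subalgebra and the trace, uniqueness does the work, whereas the paper carries out the verification by an explicit matrix-entry calculation. The paper's approach, on the other hand, is more self-contained and yields an explicit formula for $E$, which is not directly visible from your argument (though easily recoverable by the same pairing you use to identify $E$ on $M\rtimes\Gamma$).
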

\begin{proof}
Consider the u.c.p.\ map $\phi: \B(L^2M\ot \ell^2\Gamma)\to \B(L^2M)\ovt \ell^\infty\Gamma$ given by 
	$\langle(\phi(T))(t)\xi, \eta\rangle=\langle T\xi\otimes\delta_t, \eta\otimes\delta_t\rangle$
	for any $T\in \B(L^2M\ot\ell^2\Gamma)$, $t\in \Gamma$ and $\xi, \eta\in L^2M$.

Note that if $T\in JMJ'\cap \B(L^2M\ot\ell^2\Gamma)$,
	then $\phi(T)(t)\in M$ for any $t\in \Gamma$.
A similar computation shows that for any $s\in \Gamma$ and any $T\in \B(L^2M\ot\ell^2\Gamma)$ commutes with $Ju_sJ$.
	one has $\phi(T)(ts)=\phi(T)(t)$ for any $t\in \Gamma$.
It follows that
$$
E:=\phi_{\mid \langle M\rtimes\Gamma, e_{M\rtimes\Lambda}\rangle}:\langle M\rtimes\Gamma, e_{M\rtimes\Lambda}\rangle\to M\ovt \ell^\infty(\Gamma/\Lambda).
$$

Realizing $\ell^\infty(\Gamma/\Lambda)\subset \B(L^2M\ot \ell^2\Gamma)$ via $\delta_{t\Lambda}\mapsto u_te_{M\rtimes\Lambda} u_t^*$ for $t\in \Gamma$, it is routine to check that $E_{\mid M\ovt \ell^\infty(\Gamma/\Lambda)}=\id$.
One also checks that $E(\sum_{t\in F}a_tu_t)=a_e\ot 1_{\Gamma/\Lambda}$ for any $F\subset \Gamma$ finite, where $a_t\in M$,
	and the fact $E_{\mid M\rtimes\Gamma}=E^{M\rtimes\Gamma}_M$ follows from the normality of $E(\cdot)(t)$ for any $t\in \Gamma$.
		
To see $E$ is (semifinte) trace preserving, where $M\ovt \ell^\infty(\Gamma/\Lambda)$ is equipped with $\sum_{t\in \Gamma/\Lambda}\langle \cdot \hat 1\ot \delta_t, \hat 1\ot \delta_t\rangle$,
 	first compute that $\Tr(e_F T e_F)=\sum_{t\in F}\langle E(T) \hat 1\otimes \delta_{t}, \hat 1\ot \delta_{t}\rangle$ for any $T=au_ge_{M\rtimes\Lambda} bu_h$, where $a,b\in M$, $g,h\in \Gamma$, $F\subset \Gamma/\Lambda$ is a finite set and $e_F=\sum_{t\in F} u_te_{M\rtimes\Lambda} u_t^*$.
It follows that $\Tr(e_F \cdot e_F)=\sum_{t\in F} \langle \cdot \hat 1\ot \delta_t, \hat 1\ot \delta_t\rangle$ on $\langle M\rtimes\Gamma, e_{M\rtimes\Lambda}\rangle$
	and hence $\Tr(T)=\sum_{t\in\Gamma/\Lambda}   \langle E(T) \hat 1\ot \delta_t, \hat 1\ot \delta_t\rangle$
	for any $T\in \langle M\rtimes\Gamma, e_{M\rtimes\Lambda}\rangle$.

Lastly, for any $u\in \cU(M\rtimes\Gamma)$ of the form $u=\sum_{n\in \N}u_{t_n}p_n$, $T\in \langle M\rtimes\Gamma, e_{M\rtimes\Lambda}\rangle$, 
	$\xi,\eta\in \hat M$ and $g\in \Gamma$, we compute
\[
\begin{aligned}
	\langle \phi(u^* T u) \xi\ot \delta_g, \eta \ot \delta_g\rangle
	&=\sum_{m,n\in \N} \langle T  (\sigma_{t_n}(p_n\xi) \otimes \delta_{t_ng}), \sigma_{t_m}(p_m\eta)\ot\delta_{t_mg}\rangle\\
	&=\sum_{m,n\in \N} \langle T (J\sigma_{g^{-1}}(p_n)J) (\sigma_{t_n}(\xi)\otimes\delta_{t_ng}), (J\sigma_{g^{-1}}(p_m) J)(\sigma_{t_m}(\eta)\otimes\delta_{t_m g})\rangle\\
	&=\sum_{n\in \N} \langle T  (\sigma_{t_n}(p_n\xi) \otimes \delta_{t_ng}), \sigma_{t_n}(p_n\eta)\ot\delta_{t_ng}\rangle\\
	&=\langle \phi(T) u(\xi\otimes\delta_g), u(\eta\ot \delta_g)\rangle.
\end{aligned}
\]
\end{proof}

\begin{lem}\label{lem: central state}
Let $\Gamma$ be a discrete group with a subgroup $\Lambda$, $(M,\tau)$ a tracial von Neumann algebra and $\Gamma\actson (M,\tau)$
	a trace preserving action and $p\in \cZ(M)$ a nonzero projection.
Suppose $\cG<\cN_{p(M\rtimes\Gamma )p}(pMp)$ is a subgroup containing elements of the form $\sum_{n\in \N} u_{t_n} p_n$,
	where $\{p_n\}_{n\in \N}$ is a partition of $p$ in $\cZ(M)$ and $\{t_n\}_{n\in \N}\subset \Gamma$.

Then $N:=\{pMp,\cG\}''$ is amenable relative to $M\rtimes\Lambda$ in $M\rtimes\Gamma$ if and only if 
	there exists a $\cG$-invariant state $\varphi$ on $M\ovt \ell^\infty(\Gamma/\Lambda)$ with $\varphi_{\mid pMp\ot 1}=\tau_{pMp}$.
\end{lem}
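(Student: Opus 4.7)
The plan is to use the embedding $M\ovt \ell^\infty(\Gamma/\Lambda) \hookrightarrow \langle M\rtimes\Gamma, e_{M\rtimes\Lambda}\rangle$ and the trace-preserving conditional expectation $E$ provided by Lemma~\ref{lem: cond exp} as a bridge between positive linear functionals on $p\langle M\rtimes\Gamma, e_{M\rtimes\Lambda}\rangle p$ and states on $M\ovt \ell^\infty(\Gamma/\Lambda)$. The key observation is that every $u \in \cG$ extends to a unitary $\tilde u := u + (1-p) \in \cU(M\rtimes\Gamma)$ of the form covered by Lemma~\ref{lem: cond exp}, so $\Ad(\tilde u)$ preserves $M\ovt \ell^\infty(\Gamma/\Lambda)$, $E$ is $\Ad(\tilde u)$-equivariant, and $uXu^* = \tilde u X \tilde u^*$ whenever $X \in p\langle M\rtimes\Gamma, e_{M\rtimes\Lambda}\rangle p$.

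For the forward direction, given an $N$-central positive linear functional $\Phi$ on $p\langle M\rtimes\Gamma, e_{M\rtimes\Lambda}\rangle p$ with $\Phi_{\mid p(M\rtimes\Gamma)p} = \tau$, I would define $\varphi(x) := \tau(p)^{-1}\Phi((p\otimes 1) x (p\otimes 1))$ on $M\ovt \ell^\infty(\Gamma/\Lambda)$. One checks at once that $\varphi(1) = 1$ and $\varphi_{\mid pMp\otimes 1} = \tau_{pMp}$. For $\cG$-invariance, the identities $p\tilde u = u$ and $\tilde u^* p = u^*$ give $\varphi(\tilde u x \tilde u^*) = \tau(p)^{-1}\Phi(u(pxp)u^*)$, and the $N$-centrality of $\Phi$ (applied to $u \in \cG \subset N$) yields $\Phi(u(pxp)u^*) = \Phi((pxp)u^*u) = \Phi(pxp)$, so $\varphi$ is $\cG$-invariant.

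For the reverse direction, I would set $\Phi(X) := \tau(p)\varphi(E(X))$ on $p\langle M\rtimes\Gamma, e_{M\rtimes\Lambda}\rangle p$. Since $E_{\mid M\rtimes\Gamma}$ is the conditional expectation onto $M$ and $\varphi_{\mid pMp\otimes 1} = \tau_{pMp}$, this yields $\Phi_{\mid p(M\rtimes\Gamma)p} = \tau$. The $\cG$-centrality of $\Phi$ follows from combining $\Ad(\tilde u)$-equivariance of $E$ with $\cG$-invariance of $\varphi$. The main obstacle is the $pMp$-centrality of $\Phi$: by $M$-bimodularity of $E$ this reduces to showing $\varphi((a\otimes 1)E(X)) = \varphi(E(X)(a\otimes 1))$ for $a \in pMp$, which is not automatic from the tracial condition on $pMp\otimes 1$ alone. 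The resolution I would pursue is to replace $\varphi$ by an averaged state $\varphi'(x) := \int_{\cU(pMp)} \varphi((a\otimes 1)x(a^*\otimes 1))\, dm(a)$ with respect to an invariant mean $m$ on the amenable Polish group $\cU(pMp)$, which exists since $pMp$ is a finite von Neumann algebra. The averaged $\varphi'$ still extends $\tau_{pMp}$ by traciality, is $pMp$-central by construction, and remains $\cG$-invariant since $\cG$ normalizes $pMp\otimes 1$ --- the change of variable $a \mapsto \tilde u^* a \tilde u$ stays inside $\cU(pMp)$, so the $\cG$-invariance of $\varphi$ together with a suitably chosen ($\Ad(\cG)$-invariant) mean $m$ preserves the property. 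Using $\varphi'$ in place of $\varphi$ produces the required $N$-central $\Phi$, and the centrality passes from the $*$-algebra generated by $pMp$ and $\cG$ to all of $N$ by a standard Kaplansky-type density argument.
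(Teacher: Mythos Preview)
Your overall architecture agrees with the paper's: the forward direction restricts the $N$-central functional on $p\langle M\rtimes\Gamma,e_{M\rtimes\Lambda}\rangle p$ to $M\ovt\ell^\infty(\Gamma/\Lambda)$ (the paper just calls this direction ``clear''), and the reverse direction sets $\omega:=\varphi\circ E$ using the conditional expectation of Lemma~\ref{lem: cond exp}, then passes from centrality under $pMp$ and $\cG$ to $N$-centrality by the standard Kaplansky/Cauchy--Schwarz density argument.

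You are right to isolate the $pMp$-centrality of $\omega$ as the delicate step: via the $M$-bimodularity of $E$ it reduces to $\cU(pMp)$-invariance of $\varphi$, which is \emph{not} part of the hypothesis. The paper's proof simply writes ``Similarly, we have $\omega$ is $pMp$-central'' without further argument. Your proposed repair, however, carries its own gap. Averaging $\varphi$ over $\cU(pMp)$ with an invariant mean $m$ does produce a $pMp$-central state extending $\tau_{pMp}$, but preserving $\cG$-invariance after the averaging forces $m$ to be $\Ad(\cG)$-invariant as well---you say as much. That amounts to asking for a mean on $\cU(pMp)$ invariant under the action of $\cU(pMp)\rtimes_{\Ad}\cG$, and since $\cG$ is typically nonamenable there is no reason such a mean exists. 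So the ``suitably chosen'' mean is not actually available in general.

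In practice the point is moot for this paper: in every application of the lemma either the algebra playing the role of $M$ is abelian (so $pMp\ot 1$ lies in the center of $M\ovt\ell^\infty(\Gamma/\Lambda)$ and $pMp$-centrality is automatic), or one only needs relative amenability of $\cG''$ rather than of $\{pMp,\cG\}''$, in which case $\cG$-centrality together with $\omega_{\mid p(M\rtimes\Gamma)p}=\tau$ already yields $\cG''$-centrality and the $pMp$-step can be skipped entirely.
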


\begin{proof}
The only if direction is clear. To see the if direction, 
	consider $\omega:=\varphi\circ E: \langle M\rtimes\Gamma, e_{M\rtimes\Lambda}\rangle\to \C$,
	where $E$ is from the previous lemma.
Note that $\omega$ is $\cG$-invariant as $E$ is $\cG$-equivariant.
Similarly, we have $\omega$ is $pMp$-central.
Since $\omega_{\mid p(M\rtimes\Gamma)p}=\tau_{p(M\rtimes\Gamma)p}$,
	a standard argument shows that $\omega$ is indeed $N$-central.
\end{proof}

\subsection{Intertwining and relative amenability via measure equivalence}\label{sec: ME}
The main goal of this subsection is to discuss an measure equivalence counterpart of the von Neumann algebra intertwining notion \cite{Po06B} and relative amenability one \cite{OzPo10I}. Throughout this subsection, we assume the following notation.

	Let $\Gamma$ and $\Lambda$ be measure equivalent groups.  Then there exists a standard measure space $(\Omega,\mu)$ (called an ME-coupling between $\Gamma$ and $\Lambda$)
	with commuting measure preserving actions of $\Gamma$ and $\Lambda$
	such that each action admits a finite measure fundamental domain.
 Let $\Gamma_0<\Gamma$, $\Lambda_0<\Lambda$ be subgroups.
Denote by $X=\Omega^\Lambda$ and we may identify $(\Omega,\mu)$ with $(X\times \Lambda, \mu_{\mid X}\times c)$,
	where $c$ is the counting measure on $\Lambda$ and $X$ is identified with a $\Lambda$-fundamental domain in $\Omega$.
Note that we may identify $\Omega^{\Lambda_0}$ with $X\times \Lambda/\Lambda_0$, equipped with the measure $\mu_{\mid X}\times c_{\Lambda/\Lambda_0}$,
	which we denote by $\mu_{\Lambda_0}$.

We continue by recalling from  \cite[Definition 2]{Sa09} the following intertwining relation in the measure equivalence framework.

\begin{defn}[{\cite{Sa09}}]
We say that   $\Gamma_0$ intertwines into $\Lambda_0$ via the coupling $\Omega$, denoted by $\Gamma_0\preccurlyeq_\Omega \Lambda_0$
	if there exists a non-null measurable subset $E\subset \Omega$ that is $\Gamma_0\times \Lambda_0$-invariant 
	with $\mu_{\Lambda_0}(E)<\infty$.
\end{defn}
    
Inspired by \cite[Theorem 3.5, (ii)]{Hay24}, we are considering the following relative amenability notion in the framework of measure equivalence.

\begin{defn}
    We say that $\Gamma_0$ is amenable relative to $\Lambda_0$ via the coupling $\Omega$
	if there exists a $\Gamma_0$-invariant state $\nu$ on $L^\infty(\Omega^{\Lambda_0}, \mu_{\Lambda_0})$
	such that $\nu_{X}=\mu_{\mid X}$,
	where $\nu_X(1_F)=\nu(1_{F\times \Lambda/\Lambda_0})$ for any measurable subset $F\subset X$.
\end{defn}



Continuing in the above setting, by \cite{Fu99A}
	we may assume that $X\subset Y$ and $\cR({\Gamma\actson X})=\cR(\Lambda\actson Y)\cap (X\times X)$
	by replacing $\Lambda$ with $\Lambda\times \Z_d$ and $\Omega^\Gamma$ with $\Omega^\Gamma\times \Z_d$ for some $d\in \N$,
	where $Y=\Omega^\Gamma$.
It follows that we have $(L^\infty(X)\subset L^\infty(X)\rtimes\Gamma)=(pL^\infty(Y)\subset p(L^\infty(Y)\rtimes\Lambda)p)$ with
	$p=1_X\in L^\infty(Y)$.
In the following, we denote by $\{u_t\mid t\in \Gamma\}$ and $\{v_g\mid g\in \Lambda\}$ the canonical unitaries in 
	$L^\infty(X)\rtimes\Gamma=:A\rtimes \Gamma$ and $L^\infty(Y)\rtimes\Lambda=:B\rtimes\Lambda=:M$
	associated with $\Gamma$ and $\Lambda$, respectively.
	
\begin{prop}\label{lem: ME intertwining}
Using the above notations, the following statements hold.
\begin{enumerate}
	\item One has $\Gamma_0\preccurlyeq_\Omega \Lambda_0$ if and only if $A\rtimes\Gamma_0\prec_M B\rtimes\Lambda_0$
	if and only if $L\Gamma_0\prec_M B\rtimes\Lambda_0$. \label{item: ME intertwining}
	\item The subgroup $\Gamma_0$ is amenable relative to $\Lambda_0$ via $\Omega$ 
	if and only if $A\rtimes \Gamma_0$ is amenable relative to $B\rtimes\Lambda_0$ in $M$
	if and only if $L\Gamma_0$ is amenable relative to $B\rtimes\Lambda_0$ in $M$. \label{item: ME rel amen}
\end{enumerate}
\end{prop}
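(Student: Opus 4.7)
The plan is to prove both parts by identifying $L^\infty(\Omega^{\Lambda_0},\mu_{\Lambda_0})$ with $A\ovt\ell^\infty(\Lambda/\Lambda_0)$, the latter realized as a corner of the subalgebra $B\ovt\ell^\infty(\Lambda/\Lambda_0)\subset\langle M,e_{B\rtimes\Lambda_0}\rangle$ produced by Lemma~\ref{lem: cond exp}, and then verifying that the natural $\Gamma$-action on $\Omega^{\Lambda_0}=X\times\Lambda/\Lambda_0$ (coming from the commutation of the $\Gamma$ and $\Lambda$ actions on $\Omega$) is implemented by conjugation with the canonical unitaries $u_t\in pMp$, $t\in\Gamma$. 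The preparatory step is to extend the partial isometry $u_t$ to a genuine unitary $\tilde u_t:=u_t+(1-p)\in\cU(M)$. Writing $u_t=\sum_{s\in\Lambda}a_{t,s}v_s$ from orbit equivalence with $a_{t,s}\in A$, one obtains $\tilde u_t=\sum_s\tilde a_{t,s}v_s$ with $\{\tilde a_{t,s}\}_s$ a partition of unity in $\cZ(M)=B$. The ``moreover'' clause of Lemma~\ref{lem: cond exp} then makes the conditional expectation $E$ equivariant under $\Ad(\tilde u_t)$, and since $\tilde u_t p=u_t=p\tilde u_t$, the map $\Ad(\tilde u_t)$ restricts to $\Ad(u_t)$ on $p\langle M,e_{B\rtimes\Lambda_0}\rangle p$; a cocycle computation then identifies $\Ad(u_t)$ on $A\ovt\ell^\infty(\Lambda/\Lambda_0)$ with the natural $t$-action on $\Omega^{\Lambda_0}$.

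For part (1), the implication $A\rtimes\Gamma_0\prec_M B\rtimes\Lambda_0\Rightarrow L\Gamma_0\prec_M B\rtimes\Lambda_0$ is immediate from the standard characterization of Popa intertwining via an invariant finite-trace positive element in the basic construction. Given a $\Gamma_0\times\Lambda_0$-invariant $F\subset\Omega$ with $\mu_{\Lambda_0}(F)<\infty$, the indicator $1_F$ descends to a nonzero projection $f\in L^\infty(\Omega^{\Lambda_0})\cong A\ovt\ell^\infty(\Lambda/\Lambda_0)\subset p\langle M,e_{B\rtimes\Lambda_0}\rangle p$; by the action identification $f$ commutes with $u_t$ for every $t\in\Gamma_0$, and, being in the abelian diagonal algebra, with $A$, hence with all of $A\rtimes\Gamma_0$, with $\Tr(f)=\mu_{\Lambda_0}(F)<\infty$. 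For the reverse direction, if $L\Gamma_0\prec_M B\rtimes\Lambda_0$, take a nonzero positive $L\Gamma_0$-central $T\in p\langle M,e_{B\rtimes\Lambda_0}\rangle p$ with $\Tr(T)<\infty$. Applying the $B$-bimodular $E$ yields $E(T)=pE(T)p\in A\ovt\ell^\infty(\Lambda/\Lambda_0)$, which is nonzero, positive, $\Gamma_0$-invariant and $\mu_{\Lambda_0}$-integrable; a nontrivial level set of the corresponding $L^1$-function produces a $\Gamma_0$-invariant subset of $\Omega^{\Lambda_0}$ of positive finite $\mu_{\Lambda_0}$-measure, which lifts to the required $\Gamma_0\times\Lambda_0$-invariant subset of $\Omega$.

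For part (2), Lemma~\ref{lem: central state} applied with $\cG=\{\tilde u_t\mid t\in\Gamma_0\}$ gives that $A\rtimes\Gamma_0$ is amenable relative to $B\rtimes\Lambda_0$ in $M$ iff there exists a $\Gamma_0$-invariant state $\varphi$ on $B\ovt\ell^\infty(\Lambda/\Lambda_0)$ with $\varphi_{\mid A\ot 1}=\tau_A$. Given a ME-amenability state $\nu$ on $L^\infty(\Omega^{\Lambda_0})\cong A\ovt\ell^\infty(\Lambda/\Lambda_0)$, the formula $\varphi(x):=\nu(pxp)$ defines a $\Gamma_0$-invariant state on $B\ovt\ell^\infty(\Lambda/\Lambda_0)$ with the required restriction, using $p\tilde u_t=u_t=\tilde u_tp$ for the invariance; this establishes the ME$\Rightarrow A\rtimes\Gamma_0$ implication. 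The implication from $A\rtimes\Gamma_0$-amenability to $L\Gamma_0$-amenability is immediate from the subalgebra inclusion. For the remaining direction, take an $L\Gamma_0$-central state $\omega$ on $p\langle M,e_{B\rtimes\Lambda_0}\rangle p$ with $\omega_{\mid pMp}=\tau_{pMp}$; by the action identification of Step~1, the restriction $\omega_{\mid A\ovt\ell^\infty(\Lambda/\Lambda_0)}$ is automatically $\Gamma_0$-invariant, with correct restriction to $A\ot 1$, providing the ME state.

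The principal technical obstacle I foresee is the Step~1 identification of the $\Ad(u_t)$-action on $A\ovt\ell^\infty(\Lambda/\Lambda_0)\subset p\langle M,e_{B\rtimes\Lambda_0}\rangle p$ with the natural $\Gamma$-action on $\Omega^{\Lambda_0}$; matching the orbit-equivalence cocycle $s\mapsto a_{t,s}$ against the $\Lambda$-action inside $B\ovt\ell^\infty(\Lambda/\Lambda_0)$ requires tracking several identifications simultaneously. Once this is in place, both parts reduce to standard manipulations with Popa's intertwining-by-bimodules and Ozawa--Popa relative amenability.
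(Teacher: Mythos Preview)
Your proposal is correct and follows essentially the same approach as the paper: identify $L^\infty(\Omega^{\Lambda_0})$ with $A\ovt\ell^\infty(\Lambda/\Lambda_0)\subset p\langle M,e_{B\rtimes\Lambda_0}\rangle p$, use the trace-preserving conditional expectation $E$ of Lemma~\ref{lem: cond exp} and its equivariance to pass between $L\Gamma_0$-central elements and $\Gamma_0$-invariant elements of $L^\infty(\Omega^{\Lambda_0})$, and invoke Lemma~\ref{lem: central state} for the amenability part. The paper is terser, simply asserting that $E$ is $[\cR(\Lambda\actson Y)]$-equivariant, whereas you spell out the extension $\tilde u_t=u_t+(1-p)$ to land in the exact hypothesis of the ``moreover'' clause of Lemma~\ref{lem: cond exp}; this is the same mechanism, just made explicit.
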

\begin{proof}
We may identify $L^\infty(\Omega^{\Lambda_0})=p L^\infty(Y)\ovt \ell^\infty(\Lambda/\Lambda_0)\subset p\langle M, e_{B\rtimes\Lambda_0}\rangle p$
	and it is not difficult to check that this identification is (semifinite) trace preserving. 
It follows that $\Gamma_0\preccurlyeq_{\Omega}\Lambda_0$ implies $A\rtimes\Gamma_0\prec_M B\rtimes\Lambda_0$,
	and hence $L\Gamma_0\prec_M B\rtimes\Lambda_0$.
If $f\in p\langle M, e_{B\rtimes\Lambda_0}\rangle p$ is a nonzero projection with finite trace commuting with $L\Gamma_0$,
	then apply Lemma~\ref{lem: cond exp} yields $E(f)\in L^\infty(X)\ovt \ell^\infty(\Lambda/\Lambda_0)=L^\infty(\Omega^{\Lambda_0})$
	a nonzero positive element with finite trace.
Moreover, $E(f)$ is $\Gamma_0$-invariant as $E$ is $[\cR({\Lambda\actson Y})]$-equivariant.
Taking an appropriate spectral projection of $E(f)$ yields $\Gamma_0\preccurlyeq_\Omega \Lambda_0$.

To see (\ref{item: ME rel amen}), note that the inclusion $L^\infty(\Omega^{\Lambda_0})\subset 
	p\langle M, e_{B\rtimes\Lambda_0}\rangle p$ shows that
	if $L\Gamma_0$ is amenable relative to $B\rtimes\Lambda_0$ then $\Gamma_0$ is amenable relative to $\Lambda_0$ via $\Omega$.
The rest follows from Lemma~\ref{lem: central state}.	
\end{proof}

Finally, we record the following elementary lemma.

\begin{lem}\label{lem: commuting intertwine}
Let $\Gamma$ and $\Lambda$ be countable groups that are measure equivalent via an ergodic coupling $(\Omega,\mu)$.
Then the following hold.
\begin{enumerate}
	\item Suppose $\Gamma_1, \Gamma_2<\Gamma$ and $\Sigma<\Lambda$ are subgroups such that $\Gamma_1$ and $\Sigma$ are normal.
	 If $\Gamma_1$ and $\Gamma_2$ are commuting and $\Gamma_i\preccurlyeq_{\Omega}\Sigma$ for $i=1,2$, 
	 then $\Gamma_1\Gamma_2\preccurlyeq_{\Omega}\Sigma$.
	\item Suppose $\Gamma_1, \Gamma_2<\Gamma$ and $\Sigma<\Lambda$ are subgroups  such that $\Gamma_1$ and $\Sigma$ are normal.
	If $\Gamma_1\preccurlyeq_{\Omega}\Lambda$ and $\Lambda\preccurlyeq_{\Omega}\Gamma_2$, then $\Gamma_1\subset F \Gamma_2$ for some finite subset $F\subset \Gamma$.
\end{enumerate}	
\end{lem}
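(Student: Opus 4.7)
Both parts follow by translating to the von Neumann algebra framework via Proposition~\ref{lem: ME intertwining} and exploiting the normality hypotheses.

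For part (1), the hypotheses $\Gamma_i\preccurlyeq_\Omega\Sigma$ become $L\Gamma_i\prec_M N$ (for $i=1,2$), where $N:=B\rtimes\Sigma$, and the goal is $L(\Gamma_1\Gamma_2)\prec_M N$. Normality of $\Sigma$ in $\Lambda$ ensures $N$ is a normal subalgebra of $M$ (every unitary of $M$ conjugates $N$ to itself), while $[\Gamma_1,\Gamma_2]=1$ together with $\Gamma_1\lhd\Gamma$ implies that $L\Gamma_2$ normalizes $L\Gamma_1$. The plan is to combine the two intertwining finite-rank right-$N$-submodules $H_i\subset L^2(M)$, equipped with left $L\Gamma_i$-actions, into a finite-rank right-$N$-submodule of $L^2(M)$ carrying a left $L(\Gamma_1\Gamma_2)$-action. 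The key observation is that for each $\beta\in\Gamma_2$, the translate $u_\beta H_1$ is again an $L\Gamma_1$-$N$ intertwining bimodule (using normality of both $L\Gamma_1$ and $N$), and the second intertwining $L\Gamma_2\prec_M N$ should allow truncation of the otherwise infinite $\Gamma_2$-orbit of $H_1$ into a finitely generated module, yielding $L(\Gamma_1\Gamma_2)\prec_M N$, which by Proposition~\ref{lem: ME intertwining}(1) gives $\Gamma_1\Gamma_2\preccurlyeq_\Omega\Sigma$.

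For part (2), first observe that $\Gamma_1\preccurlyeq_\Omega\Lambda$ is automatic: the set $E=\Omega$ is $\Gamma_1\times\Lambda$-invariant with $\mu_\Lambda(\Omega)=\mu(X)<\infty$. The effective hypothesis is therefore $\Lambda\preccurlyeq_\Omega\Gamma_2$, equivalent by Proposition~\ref{lem: ME intertwining}(1) to $L\Lambda\prec_M A\rtimes\Gamma_2$. By Theorem~\ref{corner}, this provides a finite set $F_0\subset M$ and $c>0$ with
\[
\sum_{x,y\in F_0}\|E_{A\rtimes\Gamma_2}(x\,v_\lambda\, y)\|_2^2\;\ge\;c\qquad\text{for every }\lambda\in\Lambda.
\]
Using $\cR(\Gamma\car X)=\cR(\Lambda\car Y)\cap(X\times X)$, each canonical unitary $u_\gamma$ for $\gamma\in\Gamma$ admits an expansion $u_\gamma=\sum_{g\in\Lambda}p^\gamma_g v_g$ with $p^\gamma_g\in B$. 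Plugging this in transfers the above uniform lower bound to $u_\gamma$, yielding $L\Gamma_1\prec_{pMp}A\rtimes\Gamma_2$. Combined with normality of $\Gamma_1$ in $\Gamma$, a measure-theoretic analysis of the projection $\Omega\to\Omega/\Gamma_2$ shows that the $\Gamma_1$-orbit of the identity coset in $\Gamma_2\backslash\Gamma$ must be finite, i.e.\ $[\Gamma_1:\Gamma_1\cap\Gamma_2]<\infty$, which is equivalent to $\Gamma_1\subset F\Gamma_2$ for some finite $F\subset\Gamma$.

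The principal obstacle lies in the combination step of part (1): the naive sum $\sum_{\beta\in\Gamma_2}u_\beta H_1$ is typically not finitely generated as a right $N$-module, so the combination must be carried out carefully, likely via a Jones-basic-construction argument producing a common finite-trace projection in $L(\Gamma_1\Gamma_2)'\cap\langle M,e_N\rangle$ out of the finite-trace projections in $(L\Gamma_i)'\cap\langle M,e_N\rangle$ provided by the hypothesis, using the commutation $[L\Gamma_1,L\Gamma_2]=0$ and the normality of $N$ to ensure the resulting averaged projection remains nonzero and of finite trace.
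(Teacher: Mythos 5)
Your proposal has genuine gaps in both parts, and in each case the missing piece is the same device the paper actually uses: a minimal-norm (circumcenter) averaging argument carried out directly in $L^\infty(\Omega)$, not in the basic construction. For part (1) you correctly isolate the combination step as ``the principal obstacle,'' but then only conjecture a resolution (``truncation of the $\Gamma_2$-orbit,'' ``likely via a Jones-basic-construction argument''), without explaining why the averaged object is nonzero. The paper's proof: take nonzero projections $e_i\in L^\infty(\Omega)^{\Gamma_i\times\Sigma}$ with $\mu_\Sigma(e_i)<\infty$; by ergodicity translate $e_1$ (normality of $\Gamma_1$ and $\Sigma$ is what keeps the translate invariant) so that $e_1e_2\neq 0$; let $f$ be the minimal-norm element of $\overline{{\rm conv}}\{t\cdot e_1 : t\in\Gamma_2\}\subset L^2(L^\infty(\Omega)^{\Sigma},\mu_\Sigma)$. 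The circumcenter is $\Gamma_2$-invariant, $\Gamma_1\times\Sigma$-invariance passes from the convex set, and---this is exactly what your sketch lacks---the second hypothesis enters \emph{only} to show $f\neq 0$, via the pairing $\mu_\Sigma((t\cdot e_1)e_2)=\mu_\Sigma(t\cdot(e_1e_2))=\mu_\Sigma(e_1e_2)>0$, which is constant in $t$ precisely because $e_2$ is $\Gamma_2$-invariant with $\mu_\Sigma(e_2)<\infty$. Without this nonvanishing pairing (and the ergodicity step making the supports overlap), your averaged projection could perfectly well be $0$.

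In part (2), your observation that $\Gamma_1\preccurlyeq_\Omega\Lambda$ is automatic (take $E=\Omega$) is correct for the statement as literally written, though the paper's own proof uses a projection $e_1\in L^\infty(\Omega)^{\Gamma_1\times\Sigma}$ with $\mu_\Sigma(e_1)<\infty$, i.e.\ the hypothesis it really exploits involves $\Sigma$. More seriously, your transfer step is invalid: Popa's criterion applied to $L\Lambda\prec_M A\rtimes\Gamma_2$ gives the uniform lower bound $\sum_{x,y\in F_0}\|E_{A\rtimes\Gamma_2}(x v_\lambda y)\|_2^2\geq c$ only for the unitaries $v_\lambda$ of $L\Lambda$; plugging in $u_\gamma=\sum_{g}p^\gamma_g v_g$ does not ``transfer'' it, since $u_\gamma$ is an infinite $B$-combination of the $v_g$ (termwise lower bounds do not survive summation because of cancellation) and $L\Gamma_1$ is not a subalgebra of $L\Lambda$, so no intertwining is inherited. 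That inheritance is exactly the nontrivial content, and the paper again obtains it by the circumcenter: the minimal-norm element of $\overline{{\rm conv}}\{t\cdot e_2 : t\in\Gamma_1\}\subset L^2(L^\infty(\Omega)^{\Gamma_2},\mu_{\Gamma_2})$ is nonzero because $\mu_\Sigma(e_1(t\cdot e_2))=\mu_\Sigma(t\cdot(e_1e_2))>0$ by $\Gamma_1$-invariance of $e_1$, and it yields a nonzero $\Gamma_1$-invariant finite-trace projection in $\ell^\infty(\Gamma/\Gamma_2)$, which is the precise form of your final claim ``the $\Gamma_1$-orbit of the identity coset in $\Gamma_2\backslash\Gamma$ is finite''; in your write-up that claim is a placeholder rather than an argument.
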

\begin{proof}
We follow the proof of \cite[Lemma 33]{Sa09}.
Let $e_i\in L^\infty(\Omega)^{\Gamma_i\times \Sigma}$ be a nonzero projection.
By ergodicity of $\Omega$, we may assume $e_1 e_2\neq 0$ by replacing $e_1$ with $t\cdot e_1\cdot g$ for some $t\in \Gamma/\Gamma_1$ and $g\in \Lambda/\Sigma$.

To see (1), we may further assume $\mu_{\Sigma}(e_i)<\infty$ for $i=1,2$.
Set $\cC={\rm conv}\{t\cdot e_1\mid t\in \Gamma_2\}\subset L^2( L^\infty(\Omega)^{\Sigma}, \mu_{\Sigma})$
	and pick $f$ to be the minimal norm element in the closure of $\cC$.
Since $\mu_{\Sigma}((t\cdot e_1) e_2)=\mu_{\Sigma}(t\cdot (e_1 e_2))=\mu_{\Sigma}(e_1 e_2)>0$,
	one has $f\neq 0$.
Moreover, we may assume $f$ is a projection by taking an appropriate spectral projection. 
It follows that $f$ implements $\Gamma_1\Gamma_2\preccurlyeq_{\Omega}\Sigma$.

The argument is similar for (2). Indeed, we have $\mu_{\Sigma}(e_1)<\infty$ and $\mu_{\Gamma_2}(e_2)<\infty$ by assumption.
Let $p$ be the minimal norm element in the closure of ${\rm conv}\{t\cdot e_2\mid t\in \Gamma_1\}\subset L^2(L^\infty(\Omega)^{\Gamma_2},\mu_{\Gamma_2})$.
To see $p$ is nonzero, observe
$\mu_{\Sigma}( e_1 (t\cdot e_2))=\mu_{\Sigma}(t\cdot (e_1e_2))>0$.
It follows that there exists a nonzero $\Gamma_1$-invariant projection in $\ell^\infty(\Gamma/\Gamma_2)$ with finite trace,
	as desired.
\end{proof}

\subsection{From intertwining to measure equivalence}
All results in this subsection are due to Sako \cite{Sa09}.
These results, especially when combined with Popa's intertwining techniques through Proposition~\ref{lem: ME intertwining},
	give efficient ways to conclude measure equivalence from intertwining.
We present here a streamlined proof for ease of reference and completeness. 

\begin{note}\label{note: ME intertwine}
Let $\Gamma$ and $\Lambda$ be countable groups that are measure equivalent via an ergodic coupling $(\Omega,\mu)$,
	assume $\Gamma_1\lhd \Gamma$, $\Lambda_1\lhd \Lambda$ are normal subgroups
	and put $\Gamma_2=\Gamma/\Gamma_1$, $\Lambda_2=\Lambda/\Lambda_1$.

For any projection $p\in L^\infty(\Omega^{\Gamma_1},\mu_{\Gamma_1} )$, 
    we may consider $\iota_{\Gamma_1}(p)$, a $\mathbb N\cup\{\infty\}$-valued measurable function on $Y$
    given by $\iota_{\Gamma_1}(p)(y)=\sum_{t\in \Gamma_2}p(t,y)$,
    where $(Y,\lambda)=(\Omega^{\Gamma}, \mu_{\Gamma})$ and $(\Omega^{\Gamma_1}, \mu_{\Gamma_1})$ is identified with $(\Gamma_2\times Y, c\times \lambda)$.
Note that $\iota_{\Gamma_1}$ is $\Lambda$-equivariant as $\Gamma_1$ is normal
    and $\mu_{\Gamma_1}(p)=\lambda(\iota_{\Gamma_1}(p))$.
Similarly, we consider $\iota_{\Lambda_1}(q)$ as a $\mathbb N\cup\{\infty\}$-valued measurable function on $X$ for any projection $q\in L^\infty(\Omega^{\Lambda_1},\mu_{\Lambda_1})$.
\end{note}

\begin{lem}[{\cite[proof of Theorem 30]{Sa09}}, cf. {\cite[Proposition 3.1]{DHI16}}]\label{lem: flip intertwining ME}
With Notation~\ref{note: ME intertwine}, assume in addition that $\Gamma=\Gamma_1\times \Gamma_2$ and $\Lambda=\Lambda_1\times \Lambda_2$.
If $\Gamma_i\preccurlyeq_\Omega \Lambda_i$ for $i=1,2$, then $\Gamma_i\sim_{\rm ME} \Lambda_i$ for $i=1,2$
\end{lem}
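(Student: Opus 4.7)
My plan is to construct, for each $i=1,2$, an ME-coupling of $\Gamma_i$ and $\Lambda_i$ as an appropriate measurable subset of $\Omega$ equipped with its restricted actions.

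First, I would apply the hypothesis $\Gamma_i\preccurlyeq_\Omega \Lambda_i$ to obtain non-null $(\Gamma_i \times \Lambda_i)$-invariant subsets $E_i \subset \Omega$ with $\mu_{\Lambda_i}(E_i) < \infty$ for $i=1,2$. For such a set to witness $\Gamma_i \sim_{\rm ME} \Lambda_i$, I need (after possibly shrinking or enlarging to a related $(\Gamma_i \times \Lambda_i)$-invariant set) that \emph{both} $\mu_{\Lambda_i}(E_i) < \infty$ and $\mu_{\Gamma_i}(E_i) < \infty$ hold, since these are exactly the two finite-fundamental-domain conditions required by the definition of measure equivalence.

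The core step is to establish the \emph{flip} $\Lambda_i \preccurlyeq_\Omega \Gamma_i$. For this I plan to exploit the product structure $\Gamma = \Gamma_1 \times \Gamma_2$ and $\Lambda = \Lambda_1 \times \Lambda_2$, which makes all four subgroups normal. Since $\Omega$ is an ME-coupling of $\Gamma$ and $\Lambda$, we trivially have $\Lambda \preccurlyeq_\Omega \Gamma$. Combined with the intertwining $\Gamma_{3-i} \preccurlyeq_\Omega \Lambda_{3-i}$ (which confines the ``other'' factor of $\Gamma$ to the ``other'' factor of $\Lambda$), Lemma~\ref{lem: commuting intertwine}(2) should force the $\Lambda_i$ direction to intertwine into $\Gamma_i$, yielding $\Lambda_i \preccurlyeq_\Omega \Gamma_i$. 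Concretely, I would use ergodicity of the coupling and normality to translate $E_1$ by a suitable element of $\Gamma_2 \times \Lambda_2$ so that $E_1 \cap E_2$ is non-null; then the $(\Gamma_1 \times \Lambda_1)$-saturation $F_1 := (\Gamma_1 \times \Lambda_1) \cdot (E_1 \cap E_2)$ is contained in $E_1$, so $\mu_{\Lambda_1}(F_1) < \infty$, and a dual argument using the fundamental domain $Y \subset \Omega$ for $\Gamma$ together with $\mu_{\Lambda_2}(E_2) < \infty$ (i.e., switching the role of the two sides of $\Omega$) should give $\mu_{\Gamma_1}(F_1) < \infty$.

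With both intertwinings $\Gamma_i \preccurlyeq_\Omega \Lambda_i$ and $\Lambda_i \preccurlyeq_\Omega \Gamma_i$ in hand, I can take the intersection of the corresponding $(\Gamma_i \times \Lambda_i)$-invariant witnesses (non-null by another application of ergodicity plus normality) to obtain a single set $E \subset \Omega$ that is $(\Gamma_i \times \Lambda_i)$-invariant with both $\mu_{\Lambda_i}(E)$ and $\mu_{\Gamma_i}(E)$ finite and non-zero. This set $E$, endowed with the restricted commuting actions of $\Gamma_i$ and $\Lambda_i$ and the restricted measure, directly exhibits $\Gamma_i \sim_{\rm ME} \Lambda_i$, completing the proof for both $i=1,2$.

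The main obstacle will be the flip step: transferring finiteness in the $\mu_{\Lambda_i}$ quotient into finiteness in the $\mu_{\Gamma_i}$ quotient. The difficulty is that the two quotients are taken with respect to very different actions and have a priori unrelated measures, so one must genuinely use the product structure together with both intertwinings (not just one) to bridge, and keep careful track of measures in the various quotient spaces $\Omega^{\Gamma_i}$, $\Omega^{\Lambda_i}$, and $\Omega^{\Gamma_i \times \Lambda_j}$.
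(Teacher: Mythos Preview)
Your overall architecture---obtain the flip $\Lambda_i\preccurlyeq_\Omega\Gamma_i$ and then intersect witnesses---would work, but the flip step has a genuine gap. Lemma~\ref{lem: commuting intertwine}(2) does \emph{not} produce a reverse intertwining: its conclusion is that $\Gamma_1\subset F\Gamma_2$ for a finite set $F\subset\Gamma$, a statement about virtual containment of subgroups of the \emph{same} group, not $\Lambda_i\preccurlyeq_\Omega\Gamma_i$. There is no way to feed in the available data ($\Gamma_{3-i}\preccurlyeq_\Omega\Lambda_{3-i}$ and the trivial $\Lambda\preccurlyeq_\Omega\Gamma$) to extract that conclusion from that lemma. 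Your ``concrete'' description also stops exactly at the hard point: you assert that ``a dual argument \ldots\ should give $\mu_{\Gamma_1}(F_1)<\infty$'', but you have no mechanism linking finiteness in $\mu_{\Lambda_2}$ of $E_2$ to finiteness in $\mu_{\Gamma_1}$ of anything.

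The paper supplies precisely that mechanism, and it is the heart of the proof. With $e\in L^\infty(\Omega^{\Gamma_1\times\Lambda_1})$ and $f\in L^\infty(\Omega^{\Gamma_2\times\Lambda_2})$ the witnesses (and $ef\neq 0$ after translating), one has the two-sided identity
\[
\int_X \iota_{\Lambda_1}(e)\,\iota_{\Lambda_2}(f)\,d\rho
\;=\;\int_\Omega ef\,d\mu
\;=\;\int_Y \iota_{\Gamma_1}(e)\,\iota_{\Gamma_2}(f)\,d\lambda.
\]
The left side is finite because $\iota_{\Lambda_1}(e)$ and $\iota_{\Lambda_2}(f)$ can be taken bounded (their integrals are $\mu_{\Lambda_1}(e)$ and $\mu_{\Lambda_2}(f)$). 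Hence the right side is finite; on the support $Y_0$ of the product one has $\iota_{\Gamma_2}(f)\geq 1$, so $\iota_{\Gamma_1}(e)\mathbf{1}_{Y_0}$ is integrable. Restricting $e$ over a level set $Y_k=\{\iota_{\Gamma_1}(e)=k\}\subset Y_0$ (which is $\Lambda_1$-invariant) yields $\tilde e=e\cdot\mathbf{1}_{\Gamma_2\times Y_k}$ with $\mu_{\Gamma_1}(\tilde e)\leq k\,\lambda(Y)<\infty$ and $\mu_{\Lambda_1}(\tilde e)\leq\mu_{\Lambda_1}(e)<\infty$. This integral identity is what converts $\Lambda$-side finiteness into $\Gamma$-side finiteness via the product structure, and it is missing from your plan.
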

\begin{proof}
We denote by $e\in L^\infty(\Omega^{\Gamma_1\times \Lambda_1})$ and $f\in L^\infty(\Omega^{\Gamma_2\times \Lambda_2})$
	be nonzero projections with $\mu_{\Lambda_1}(e)<\infty$ and $\mu_{\Lambda_2}(f)<\infty$.
Since $\rho(\iota_{\Lambda_1}(e))=\mu_{\Lambda_1}(e)<\infty$, we may assume $\iota_{\Lambda_1}(e)$ is bounded. 
Similarly, we may assume $\iota_{\Lambda_2}(f)$ is bounded.
Moreover by ergodicity of $\Gamma \times \Lambda\actson (\Omega, \mu)$, we may assume $ef\neq 0$.

Observe that 
$$
\int_\Omega efd\mu=\int_{X} \iota_{\Lambda_1}(e)\iota_{\Lambda_2}(f)d\rho
$$
and hence 
$
0<\int_\Omega efd\mu=\int_{X} \iota_{\Lambda_1}(e)\iota_{\Lambda_2}(f)d\rho<\infty.
$
Indeed, this is clear when $e=1_{X_s\times \Lambda_1\times s}$ and $f=1_{X_t\times t\times \Lambda_2}$ for $s\in\Lambda_2$, $t\in\Lambda_1$ and $X_s, X_t\subset X$ measurable subsets.
The general case follows by linearity and normality of the integration.
For the same reason, we have
$
0<\int_\Omega efd\mu=\int_{X} \iota_{\Lambda_1}(e)\iota_{\Lambda_2}(f)d\rho<\infty.
$

Set $g:=\iota_{\Gamma_1}(e)\iota_{\Gamma_2}(f):Y\to \mathbb N\cup\{\infty\}$ and denote by $Y_0$ the support of $g$.
Note that $\iota_{\Gamma_1}(e)(x)\leq \iota_{\Gamma_1}(e)(x) \iota_{\Gamma_2}(f)(x)=g(x)$ for any $x\in Y_0$,
	as $\iota_{\Gamma_2}(f)(x)\geq 1$.
Moreover, $\iota_{\Gamma_1}(e)1_{Y_0}$ is a nonzero function as $g$ is nonzero.
Thus $0<\int_{Y_0} \iota_{\Gamma_1}(e) d\lambda <\infty$
and there exists some positive integer $k$ with $\lambda((\iota_{\Gamma_1}(e)1_{Y_0})^{-1}(k))>0$.
Set $Y_k=(\iota_{\Gamma_1}(e))^{-1}(k)$, which is $\Lambda_1$-invariant,
	and then $\tilde e:=e1_{\Gamma_2\times Y_k}\in L^\infty(\Omega^{\Gamma_1})$ is $\Lambda_1$-invariant and
$$
\int_{\Omega^{\Gamma_1}} e 1_{\Gamma_2\times Y_k} d\mu_{\Gamma_1}=\int_{Y_k}\iota_{\Gamma_1}(e)d\lambda\leq k\lambda(Y),
$$
i.e., $\tilde e$ is finite in both $\mu_{\Gamma_1}$ and $\mu_{\Lambda_1}$ and hence $\Gamma_1\sim_{\rm ME} \Lambda_1$.
Similarly, we have $\Gamma_2\sim_{\rm ME}\Lambda_2$.
\end{proof}
\begin{lem}[{\cite[Proposition 28]{Sa09}, cf. \cite[Proposition 3.9]{Spa23}}]\label{lem: normal subgroup intertwine}
Assume Notation~\ref{note: ME intertwine}.
If $\Gamma_1\preccurlyeq_\Omega \Lambda_1$ and $\Lambda_1\preccurlyeq_\Omega \Gamma_1$, then $\Gamma_i\sim_{\rm ME} \Lambda_i$ for $i=1,2$.
\end{lem}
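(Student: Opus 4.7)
The plan is to build one common $\Gamma_1\times\Lambda_1$-invariant subset $p\subset\Omega$ of finite $\mu_{\Gamma_1}$- and $\mu_{\Lambda_1}$-measures that realizes both intertwinings at once, and then extract the two ME statements from $p$ itself and from the quotient $\bar\Omega:=\Omega/(\Gamma_1\times\Lambda_1)$.

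First I pick nonzero projections $e,f\in L^\infty(\Omega^{\Gamma_1\times\Lambda_1})$ with $\mu_{\Lambda_1}(e)<\infty$ witnessing $\Gamma_1\preccurlyeq_\Omega\Lambda_1$ and $\mu_{\Gamma_1}(f)<\infty$ witnessing $\Lambda_1\preccurlyeq_\Omega\Gamma_1$. Since $\Gamma_1,\Lambda_1$ are normal in $\Gamma,\Lambda$, translating $e$ by any $(\gamma,\lambda)\in\Gamma\times\Lambda$ preserves both its $\Gamma_1\times\Lambda_1$-invariance and its $\mu_{\Lambda_1}$-measure, so ergodicity of $\Gamma\times\Lambda\actson\Omega$ lets me replace $e$ with a translate for which $p:=e\wedge f\neq 0$. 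Then $p$ is a nonzero $\Gamma_1\times\Lambda_1$-invariant projection with $\mu_{\Gamma_1}(p),\mu_{\Lambda_1}(p)<\infty$. Viewed as a $\sigma$-finite subspace of $\Omega$ with the restricted free commuting actions of $\Gamma_1$ and $\Lambda_1$, the intersections of $p$ with $\Gamma_1$- and $\Lambda_1$-fundamental domains of $\Omega$ give fundamental domains for $\Gamma_1$ and $\Lambda_1$ acting on $p$, of respective $\mu$-measures $\mu_{\Gamma_1}(p)$ and $\mu_{\Lambda_1}(p)$, both finite; this establishes $\Gamma_1\sim_{\rm ME}\Lambda_1$.

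For $\Gamma_2\sim_{\rm ME}\Lambda_2$ I pass to $\bar\Omega$, which carries a commuting essentially free $\Gamma_2\times\Lambda_2$-action by normality combined with freeness of the original $\Gamma$- and $\Lambda$-actions. To exhibit finite-measure fundamental domains for $\Gamma_2$ and $\Lambda_2$ individually, I use the counting functions from Notation~\ref{note: ME intertwine}: $\iota_{\Gamma_1}(p)$ is an $\N\cup\{\infty\}$-valued $\Lambda_1$-invariant measurable function on $Y=\Omega^\Gamma$ with integral $\mu_{\Gamma_1}(p)<\infty$, and $\iota_{\Lambda_1}(p)$ is $\Gamma_1$-invariant on $X=\Omega^\Lambda$ with integral $\mu_{\Lambda_1}(p)<\infty$. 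Both are finite a.e., so by the pigeonhole step used in the proof of Lemma~\ref{lem: flip intertwining ME} I can pick positive integers $k,\ell$ and level sets $Y_k:=\iota_{\Gamma_1}(p)^{-1}(k)\subset Y$ and $X_\ell:=\iota_{\Lambda_1}(p)^{-1}(\ell)\subset X$ of positive finite measure, with $Y_k$ being $\Lambda_1$-invariant and $X_\ell$ being $\Gamma_1$-invariant. Passing these to $\bar\Omega$ is meant to produce the required finite-measure fundamental domains for the $\Gamma_2$- and $\Lambda_2$-actions.

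The main obstacle is this last step: $\bar\Omega$ typically has infinite total measure, so finite individual fundamental domains for $\Gamma_2$ and $\Lambda_2$ can only be cut out by genuinely using the two-sided intertwining data packaged into $p$. Verifying that $Y_k$ and $X_\ell$ really project to fundamental domains of the respective quotient-group actions on $\bar\Omega$ requires a careful analysis of how $Y,X\subset\Omega$ interact with the $\Gamma_1\times\Lambda_1$-quotient map, and uses essentially that the same $p$ realizes both intertwinings simultaneously.
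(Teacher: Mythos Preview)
Your argument for $\Gamma_1\sim_{\rm ME}\Lambda_1$ is correct and essentially identical to the paper's: intersect the two witnessing projections (after a translation) to get a nonzero $\Gamma_1\times\Lambda_1$-invariant $p$ with both $\mu_{\Gamma_1}(p)$ and $\mu_{\Lambda_1}(p)$ finite, and read off the coupling.

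The gap is entirely in the second half, and you yourself flag it. The quotient $\bar\Omega=\Omega/(\Gamma_1\times\Lambda_1)$ does not come with a canonical $\Gamma_2\times\Lambda_2$-invariant measure: the two natural candidates $\mu_{\Gamma_1}$ and $\mu_{\Lambda_1}$ restrict to a priori different weights on $L^\infty(\Omega)^{\Gamma_1\times\Lambda_1}$, and until you know they are proportional you cannot speak of ``finite-measure fundamental domains'' in $\bar\Omega$. Moreover, your level sets $Y_k\subset Y=\Omega^\Gamma$ and $X_\ell\subset X=\Omega^\Lambda$ live in the \emph{full} quotients $\Omega^\Gamma$, $\Omega^\Lambda$; there is no natural way to push them into $\bar\Omega=\Omega^{\Gamma_1\times\Lambda_1}$ (the quotient maps $\Omega^{\Gamma_1}\to\Omega^\Gamma$ and $\Omega^{\Gamma_1}\to\bar\Omega$ go in opposite directions), and their preimages in $\bar\Omega$ are $\Gamma_2$- or $\Lambda_2$-\emph{invariant} sets, not fundamental domains. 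The level-set trick from the proof of Lemma~\ref{lem: flip intertwining ME} works there only because both groups are genuine direct products, which lets one peel off the complementary factor; with mere normal subgroups that step has no analogue.

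The paper's proof handles $\Gamma_2\sim_{\rm ME}\Lambda_2$ quite differently. It first shows, via a separate lemma, that from $\Gamma_1\preccurlyeq_\Omega\Lambda_1$ one can upgrade the witness to a projection $f\in L^\infty(\Omega^{\Gamma_1\times\Lambda_1})$ that \emph{tiles} under $\Lambda_2$ up to a finite stabilizer $F<\Lambda_2$ (i.e.\ $F$ acts trivially on $fL^\infty(\Omega^{\Gamma_1\times\Lambda_1})$, translates by $\Lambda_2\setminus F$ are orthogonal to $f$, and the $\Lambda_2$-translates span). Symmetrically one gets $e$ and a finite $E<\Gamma_2$. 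A Radon--Nikodym argument using ergodicity then shows $d\mu_{\Lambda_1}/d\mu_{\Gamma_1}$ is constant on the fixed-point algebra, resolving the measure ambiguity. Finally one checks that the normalizers $N_F<\Lambda_2$, $N_E<\Gamma_2$ have finite index and that a suitable corner of $L^\infty(\Omega^{\Gamma_1\times\Lambda_1})$ is an ME coupling of $N_E/E$ and $N_F/F$. None of this structure is visible from your level-set construction, and something of this sort is genuinely needed.
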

\begin{lem}
Assume Notation~\ref{note: ME intertwine}.
If there exists a nonzero projection $e\in L^\infty(\Omega^{\Gamma_1\times \Lambda_1})$ with $\mu_{\Lambda_1}(e)<\infty$, 
	then there exists a finite subgroup $F<\Lambda_2$ and a nonzero projection $f\in L^\infty(\Omega^{\Gamma_1\times \Lambda_1})$
	such that $\mu_{\Lambda_1}(f)<\infty$, $F$ acts trivially on $fL^\infty(\Omega^{\Gamma_1\times \Lambda_1})$, $(t\cdot f)f=0$ for any $t\in \Lambda_2\setminus F$ and
	$\vee_{t\in \Lambda_2} t\cdot f=1_{\Omega^{\Gamma_1\times \Lambda_1}}$
\end{lem}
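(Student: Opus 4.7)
The plan is to produce $f$ as the indicator of a measurable transversal for the $\Lambda_2$-action on $\Omega^{\Gamma_1\times\Lambda_1}$ supported in a stabilizer stratum $Y_F:=\{y\in \Omega^{\Gamma_1\times\Lambda_1}:\operatorname{Stab}_{\Lambda_2}(y)=F\}$ for an appropriate finite subgroup $F<\Lambda_2$. The first step, and the main technical heart of the argument, is to use the finite-measure hypothesis on $e$ to force such an $F$ to be finite. The key observation is that the preimage of $Y_F$ in $\Omega^{\Lambda_1}$ is $\Gamma_1\times F$-invariant: for $t\in F$ and a lift $z$ of $y\in Y_F$, the identity $t\cdot[z]=[z]$ in $\Omega^{\Gamma_1\times\Lambda_1}$ amounts to $tz\in\Gamma_1\cdot z$ in $\Omega^{\Lambda_1}$. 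Freeness of $\Gamma\times\Lambda\actson\Omega$ descends to freeness of $\Gamma\times\Lambda_2\actson \Omega^{\Lambda_1}$ (preserving $\mu_{\Lambda_1}$), so $\Gamma_1\times F$ acts freely; hence any positive-measure $\Gamma_1\times F$-invariant subset of $\Omega^{\Lambda_1}$ has $\mu_{\Lambda_1}$-measure equal to $|\Gamma_1\times F|$ times that of a fundamental domain, which is infinite whenever $\Gamma_1\times F$ is infinite. The bound $\mu_{\Lambda_1}(e)<\infty$ therefore rules out $e\cap Y_F\neq 0$ for such $F$, so that $\operatorname{supp}(e)\subset\bigcup_{F\text{ finite}}Y_F$ up to null sets, and I can pick a finite $F<\Lambda_2$ with $Y_F\cap\operatorname{supp}(e)$ of positive $\mu_{\Lambda_1}$-measure.

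Next I will use ergodicity to spread $Y_F$ to a full-measure set. Since $\Gamma$ and $\Lambda$ commute, each $Y_F$ is $\Gamma_2$-invariant, so $\Lambda_2\cdot Y_F$ is $(\Gamma_2\times\Lambda_2)$-invariant. Ergodicity of $\Gamma\times\Lambda\actson\Omega$ passes to ergodicity of $\Gamma_2\times\Lambda_2\actson\Omega^{\Gamma_1\times\Lambda_1}$, and since $\Lambda_2\cdot Y_F$ has positive measure it must have full measure.

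The third step constructs the transversal and verifies the required properties. Because $F$ fixes $Y_F$ pointwise and is the exact stabilizer there, the restriction of the $\Lambda_2$-orbit equivalence relation to $Y_F$ coincides with the orbit equivalence relation of the free measure-preserving action $N_{\Lambda_2}(F)/F\actson Y_F$. A standard measurable-selection argument (e.g.\ Lusin-Novikov) then supplies a measurable transversal $S\subset Y_F$, and I set $f:=1_S$. That $F$ acts trivially on $fL^\infty(\Omega^{\Gamma_1\times\Lambda_1})$ is immediate from $S\subset Y_F$; the disjointness $(tf)f=0$ for $t\notin F$ follows from $S$ being a transversal together with the observation that $t\notin N_{\Lambda_2}(F)$ sends $Y_F$ into the disjoint conjugate $Y_{tFt^{-1}}$; the coverage condition $\vee_{t\in\Lambda_2}tf=1_{\Omega^{\Gamma_1\times\Lambda_1}}$ follows from $\Lambda_2\cdot S=\Lambda_2\cdot Y_F$; and finally, the disjoint decomposition $\bigsqcup_{t\in\Lambda_2/F}tS=\Omega^{\Gamma_1\times\Lambda_1}$ together with $\Lambda_2$-invariance of $\mu_{\Lambda_1}$ shows that the $\Lambda_2$-orbit fibre of $S$ has size $|F|$ almost everywhere, whence $\mu_{\Lambda_1}(f)=|F|\mu(X)<\infty$.

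The main obstacle is step one: translating the analytic constraint $\mu_{\Lambda_1}(e)<\infty$ into the geometric conclusion that $\Lambda_2$-stabilizers on $\operatorname{supp}(e)$ are almost-everywhere finite, via the freeness and measure-preservation of the $\Gamma_1\times F$-action on $\Omega^{\Lambda_1}$. Everything after this reduces to the ergodicity of the coupling (for coverage) and to a routine measurable selection (for the transversal inside $Y_F$).
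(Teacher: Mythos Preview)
Your Step 1 contains a fatal error. You claim that any positive-measure $\Gamma_1\times F$-invariant subset of $\Omega^{\Lambda_1}$ has $\mu_{\Lambda_1}$-measure equal to $|\Gamma_1\times F|$ times that of a fundamental domain, hence infinite whenever $\Gamma_1\times F$ is infinite. But a free measure-preserving action of an infinite countable group need not admit a measurable fundamental domain at all (any ergodic p.m.p.\ action, e.g.\ an irrational rotation, is a counterexample), so this inference is invalid. Worse, your argument proves too much: taking $F=\{e\}$, whenever $\Gamma_1$ is infinite you would conclude that every $\Gamma_1$-invariant set in $\Omega^{\Lambda_1}$ of positive $\mu_{\Lambda_1}$-measure has infinite measure---but $e$ itself is such a set, with $\mu_{\Lambda_1}(e)<\infty$ by hypothesis. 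So for infinite $\Gamma_1$ your argument would force $e=0$, contradicting the hypothesis. (There is a second issue in the same sentence: the measure-equivalence hypothesis only gives that $\Gamma\actson\Omega$ and $\Lambda\actson\Omega$ are \emph{individually} free; the product action $\Gamma\times\Lambda\actson\Omega$ is not assumed free, so your deduction that $\Gamma\times\Lambda_2\actson\Omega^{\Lambda_1}$ is free is unjustified.)

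Step 3 has a parallel gap: Lusin--Novikov provides Borel uniformizations of countable-to-one maps, but it does \emph{not} produce a measurable transversal for an arbitrary countable Borel equivalence relation; that would require the relation to be smooth, which the free action $N_{\Lambda_2}(F)/F\actson Y_F$ has no reason to be. Likewise, the final computation $\mu_{\Lambda_1}(f)=|F|\,\mu(X)$ does not follow from the disjoint decomposition you write down.

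The paper's route is entirely different and avoids both stabilizer strata and selection theorems. It works directly with the fibre-counting function $\iota_{\Lambda_1}$ on $X$: one takes $k$ \emph{minimal} among the positive values attained by $\iota_{\Lambda_1}(q)$ over all $\Gamma_1$-invariant projections $q$ with $\mu_{\Lambda_1}(q)<\infty$, and restricts to the level set where the fibre size is exactly $k$. The minimality of $k$ forces $|tK\cap K|\in\{0,k\}$ for the finite ``support sets'' $K\subset\Lambda_2$, which is what produces the finite subgroup $F$ and the disjointness of translates. The final covering property is obtained by a maximality argument using $\Gamma_2$-translates, not by abstract ergodicity.
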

\begin{proof}
Consider
$$
\{n\in \N_+ \mid \exists\ \Gamma_1 \text{-invariant\  projection}\  q\in L^1(\Omega^{\Lambda_1},\mu_{\Lambda_1})\ s.t.\ \rho(\iota_{\Lambda_1}(q)^{-1} (n))>0\},
$$
which is nonempty by the existence of $e$ and let $k$ be the minimal element in this set.
Thus we have a projection $p\in L^\infty(\Omega^{\Gamma_1\times \Lambda_1})$ such that $k\in {\rm Ran}(\iota_{\Lambda_1}(p))$.
We may replace $p$ with $p1_{E\times \Lambda}$, which is also $\Gamma_1\times \Lambda_1$-invariant,
	where $E\subset X$ the preimage of $k$ under $\iota_{\Lambda_1}(p)$.
	
Denote by $\Lambda_2^k$ the collection of subsets of $\Lambda_2$ of size $k$.
For any $K\in \Lambda_2^k$, we consider the measurable subset $X_K=\{x\in X\mid \sum_{t\in K}p(x, t)=k\}$,
	where we view $p\in L^\infty(\Omega^{\Lambda_1})=L^\infty(X\times \Lambda_2)$.
Since ${\rm Ran}(\iota_{\Lambda_1}(p))=\{0,k\}$, we have $p=\sum_{K\in \Lambda_2^k} 1_{X_K\times \Lambda_1\times K}$
	and $\rho(X_{K_1}\cap X_{K_2})=0$ for any $K_1, K_2\in \Lambda_2^k$ with $K_1\neq K_2$,
	and thus $\rho(\cup_{K\in \Lambda_2^k} X_K)=\mu_{\Lambda_1}(p)/k$.

Set $\cK=\{K\in \Lambda_2^k\mid \rho(X_K)>0\}$. For each $K\in \cK$ and $t\in \Lambda_2$, 
	we have $tK\cap K$ is either $K$ or $\emptyset$
	as $\iota_{\Lambda_1}((t\cdot p)p)1_{X_K}=|tK\cap K|1_{X_K}$
	and by the minimality of $k$ one has $|tK\cap K|=0$ or $k$.
It follows that $tK=K$ if $t\in  K K^{-1}$ and $tK\cap K=\emptyset$ otherwise.
We may assume there exists some $F\in \cK$ containing the identity element as we	
	may replace $p$ with $t^{-1}\cdot p$ for some $t\in F$,
	and it follows $F$ is a subgroup.

Now consider $f=\wedge_{t\in F} t\cdot p=\sum_{K\in \cK} (\wedge_{t\in F} 1_{X_K\times \Lambda_1\times tK})$.
For $K\in \cK$, one has $\wedge_{t\in F} 1_{X_K\times \Lambda_1\times tK}\neq 0$
	if and only if $tK=K$ for each $t\in F$,
	i.e., $K=Fs$ for any $s\in K$.
It follows that we may write $f=\sum_{s\in I} 1_{X_{Fs}\times \Lambda_1\times Fs}$
	for some subset $I\subset \Lambda_2$.
	
Utilizing the minimality of $k$, one checks that $F$ acts trivially on $fL^\infty(\Omega^{\Gamma_1\times \Lambda_1})$
	and $(t\cdot f)f=0$ for any $t\in \Lambda_2\setminus F$.
Since $f\leq p$, we also have $\mu_{\Lambda_1}(f)<\infty$.
	
We claim that there exists a projection $\tilde f\in L^\infty(\Omega^{\Gamma_1\times \Lambda_1})$ with the same aforementioned properties
	and $\vee_{t\in \Lambda_2}t\cdot \tilde f=1_{\Omega^{\Gamma_1\times \Lambda_1}}$.
Indeed, let $X_0:=\cup_{s\in I} X_{Fs}\subset X$, which is $\Gamma_1$-invariant as $f$ is $\Gamma_1$-invariant.
If $X_0\neq X$, we may find some $g\in \Gamma_2$ such that $U=g\cdot X_0\cap (X\setminus X_0)$ has positive measure.
Set $f_g=f1_{g^{-1}\cdot U\times \Lambda}=\sum_{s\in I} 1_{X_{Fs}\cap g^{-1}\cdot U\times \Lambda_1\times Fs}\in L^\infty(\Omega^{\Gamma_1\times \Lambda_1})$ and $\tilde f:=f+g\cdot f_g$. 
One then checks that $\iota_{\Lambda_1}(\tilde f)=k 1_{X_0\cup U}$, 
	$F$ acts trivially on $f_g L^\infty(\Omega^{\Gamma_1\times \Lambda_1})$
	and $(t\cdot \tilde f)\tilde f=0$ for any $t\in \Lambda_2\setminus F$ since
	$(t\cdot f) (g\cdot f_g)\leq 1_{X_0\times \Lambda}1_{U\times \Lambda}=0$.
By a maximality argument, we may assume $\iota_{\Lambda_1}(\tilde f)=k1_X$, as desired.
\end{proof}

\begin{proof}[Proof of Lemma~\ref{lem: normal subgroup intertwine}]
By assumption, there exists projections $f_0,e_0\in L^\infty(\Omega^{\Gamma_1\times \Lambda_1})$ such that 
	$\mu_{\Lambda_1}(f_0)<\infty$ and $\mu_{\Gamma_1}(e_0)<\infty$.
We may assume $r:=e_0 f_0\neq 0$ by ergodicity of $\Omega$ and hence $\mu_{\Gamma_1}(r) + \mu_{\Lambda_1}(r)<\infty$,
	which implies that $\Gamma_1\sim_{\rm ME} \Lambda_1$ via the coupling $rL^\infty(\Omega)$.

To see $\Gamma_2\sim_{\rm ME} \Lambda_2$, note that
	the above lemma yields from the existence of $f_0$ a nonzero projection $f\in L^\infty(\Omega^{\Gamma_1\times \Lambda_1})$ 
	and a finite subgroup $F<\Lambda_2$ such that $\mu_{\Lambda_1}(f)<\infty$, $F$ acts trivially on $fL^\infty(\Omega^{\Gamma_1\times \Lambda_1})$, 
		$(t\cdot f)f=0$ for any $t\in \Lambda_2\setminus F$
		and $\vee_{t\in \Lambda_2}t\cdot f=1$.
Similarly, we have a nonzero projection $e\in L^\infty(\Omega^{\Gamma_1\times \Lambda_1})$ and a finite subgroup $E<\Gamma_2$ with corresponding properties.

By ergodicity of $(\Omega,\mu)$, we may assume $ef\neq 0$.
Consider the Radon-Nikodym derivative $d\mu_{\Lambda_1}/d\mu_{\Gamma_1}$, which is $\Gamma\times \Lambda$-invariant as both $\mu_{\Gamma_1}$ and $\mu_{\Lambda_1}$ are,
	and hence must be a constant by ergodicity of $\Omega$.
Moreover as $0<\mu_{\Gamma_1}(ef),\mu_{\Lambda_1}(ef)<\infty$, one has $d\mu_{\Lambda_1}/d\mu_{\Gamma_1}$ equals to some positive finite constant
	and thus we may not need to distinguish $\mu_{\Gamma_1}$ and $\mu_{\Lambda_1}$ as measures on $L^\infty(\Omega^{\Gamma_1\times \Lambda_1})$.

Let $N_F<\Lambda_2$ be the normalizer of $F$ in $\Lambda_2$ and $q=\vee_{t\in N_F} t\cdot f$ and clearly we have an action of $N_F/F$ on $qL^\infty(\Omega^{\Gamma_1\times \Lambda_1})$ with $e$ being a fundamental domain.
We claim that $q$ is $\Gamma$-invariant.  
Indeed, we observe that $q\in L^\infty(\Omega^{\Gamma_1\times \Lambda_1})$ is the maximal projection such that $F$ acts trivially $qL^\infty(\Omega^{\Gamma_1\times \Lambda_1})$.
This is because for any $t\in \Lambda_2\setminus N_F$, we may find some $g\in F$ such that $t^{-1} g t\not\in F$ and hence $t^{-1}gt \cdot e$ is orthogonal to $e$, which entails that $g\cdot (t\cdot e)\perp (t\cdot e)$.
Then the claim follows directly from this characterization of $q$.																					
We may thus write $q=1_{\Gamma\times Y_q}$ for some $Y_q\subset Y$.

Next we show $N_F<\Lambda_2$ is finite index. 
Take a transversal $\{t_i\}_{i\in I}=\Lambda_2/ N_F$ and notice that 
	$t_i\cdot q\perp t_j\cdot q$.
It follows that $\{t_i\cdot Y_q\}_{i\in I}$ is a family of pairwise disjoint subsets of $Y$ with the same measure.
As $Y$ is a finite measure space, one has $|I|<\infty$.

In summary, we have an action $N_F/F\actson qL^\infty(\Omega^{\Gamma_1\times \Lambda_1})$ with a finite measure fundamental domain $f$, $N_F<\Lambda_2$ is of finite index and
	$q$ is a $\Gamma\times N_F$-invariant projection.
Replacing $e$ with $qe$, we may repeat the same argument to conclude that there exists a $N_K\times N_F$-invariant projection $p\in qL^\infty(\Omega^{\Gamma_1\times \Lambda_1})$ such that $N_k/K\actson pL^\infty(\Omega^{\Gamma_1\times \Lambda_1})$ has a finite measure fundamental domain $qe$ and $[\Gamma_2, N_K]<\infty$, where $N_k$  is the normalizer of $K$ in $\Gamma_2$.

Therefore, $pL^\infty(\Omega^{\Gamma_1\times \Lambda_1})$ is an ME-coupling between $N_F/F$ and $N_K/K$
	with a $N_F/F$-fundamental domain $pf$ and $N_k/K$-fundamental domain $qe$.
It follows that $\Gamma_2\sim_{\rm ME} \Lambda_2$.
\end{proof}

\section{Biexact groups and von Neumann algebras}\label{sec: biexact}

\subsection{The small-at-infinity boundary and boundary pieces}\label{sec: boundary pieces}
The notion of small-at-infinity compatification of a discrete group $\Gamma$ was introduced by Ozawa \cite{Oz04, BrOz08}.
More precisely, given a discrete group $\Gamma$, the small-at-infinity compatification $\overline\Gamma^s$ is described by
$$
C(\overline\Gamma^s)=\{f\in \ell^\infty\Gamma\mid f-R_tf\in c_0\Gamma,\ \forall t\in \Gamma\}\subset \ell^\infty\Gamma,
$$
where $R_tf(\cdot)=f(\cdot t)$ is the right translation by $\Gamma$ on $\ell^\infty\Gamma$.

The notion of the small-at-infinity boundary for von Neumann algebras developed in \cite{DKEP22, DP22}, 
	is a noncommutative analogue of the above notion and we recall it now.
Let $M$ be a tracial von Neumann algebra.
An $M$-boundary piece $\X$ is a hereditary ${\rm C}^*$-subalgebra $\X\subset\B(L^2M)$
	such that $M\cap M(\X)\subset M$ and $JMJ\cap M(\X)\subset JMJ$ are weakly dense,
	and $\X\neq \{0\}$,
	where $M(\X)$ denotes the multiplier algebra of $\X$.
For convenience, we will always assume $\X\neq \{0\}$.
Given an $M$-boundary piece $\X$, define $\K_\X^L(M)\subset \B(L^2M)$ to be the $\|\cdot\|_{\infty,2}$ closure of $\B(L^2M)\X$,
	where $\|T\|_{\infty,2}=\sup_{a\in (M)_1}\|T\hat a\|$
		and $(M)_1=\{a\in M\mid \|a\|\leq 1\}$.
Set $\K_\X(M)=\K_\X^L(M)^*\cap \K_\X^L(M)$,
	then $\K_\X(M)$ is a ${\rm C}^*$-subalgebra that contains $M$ and $JMJ$ in its multiplier algebra \cite[Proposition 3.5]{DKEP22}. 
Put $\K^{\infty,1}_\X(M)=\overline{\K_\X(M)}^{_{\|\cdot\|_{\infty,1}}}\subset \B(L^2M)$, 
	where $\|T\|_{\infty,1}=\sup_{a,b\in (M)_1}\langle T\hat a, \hat b\rangle$,
	and the small-at-infinity boundary for $M$ relative to $\X$ is given by 
$$
\bS_\X(M)=\{T\in\B(L^2M)\mid [T,x]\in \K_\X^{\infty,1}(M),{\rm\ for\ any\ }x\in M'\}.
$$
When $\X=\K(L^2M)$, we omit $\X$ in the above notations.

The following instance of boundary pieces is extensively used in the following.
Let $M$ be a finite von Neumann algebra and $\{P_i\}_{i\in I}$ a possibly infinite family of von Neumann subalgebras.
Recall from \cite[Lemma 6.12]{DP22}
	that the $M$-boundary piece $\X$ associated with $\{P_i\}_{i\in I}$ is the
	hereditary ${\rm C}^*$-subalgebra of $\B(L^2M)$ 
	generated by $\{x JyJ e_{P_i}\mid i\in I,\  x,y\in M\}$. 
	
These notions also accommodate generalizations to the non-tracial setting and we refer to \cite{DP22} for details.

\subsection{Biexactness}
Following Ozawa \cite{Oz04, BrOz08}, one says a discrete group $\Gamma$ is biexact if $\Gamma\actson \overline\Gamma^s$ is topologically amenable.
The corresponding notion for von Neumann algebras was introduced in \cite{DP22} and we recall it in the following.

Let $M$ be a von Neumann algebra, $\X\subset \B(L^2M)$ an $M$-boundary piece and $\bS_\X(M)$ the corresponding small-at-infinity boundary for $M$.
We may equip $S:=\bS_\X(M)$ with the $M$-topology, which is given by the family of seminorms
$$s^\rho_\omega(x)=\inf \{\rho(a^*a)^{1/2}\|y\|\omega(b^*b)^{1/2} \mid x=a^*yb,\ a,b\in M, y\in S\},$$
where $\rho,\omega$ range over all normal positive functionals on $M$.
By an equivalent characterization \cite[Lemma 3.4]{DP22}, 
	we say a net $\{x_i\}\subset S$ converging to $0$ in the $M$-topology if
	there exists a net of projections $p_i\in M$ such that $p_i\to 1$ strongly
	and $\|p_ix_ip_i\|\to 0$.
We also have the weak $M$-topology on $S$, which is described by 
$$\bS_\X(M)^\sharp=\{\varphi\in \bS_\X(M)^*\mid M\times M\ni (a,b)\mapsto \varphi(aTb)\in \C \text{\ is\ separately\ normal}\ \forall T\in \bS_\X(M)\},$$
i.e.,  $\{x_i\}\subset S$ converges to $0$ in the weak $M$-topology if $\varphi(x_i)\to 0$ for any $\varphi\in \bS_{\X}(M)^\sharp$.

We say $M\subset\bS_\X(M)$ is $M$-nuclear if there exist nets of u.c.p.\ maps 
$\phi_i: M\to \M_{n(i)}(\C)$ and $\psi_i: \M_{n(i)}(\C)\to \bS_\X(M)$
such that $\psi_i\circ \phi_i(x)\to x$ in the $M$-topology of $\bS_\X(M)$,
	or equivalently, in the weak $M$-topology (see \cite[Section 4]{DP22} for details).
	
And we say $M$ is biexact relative to $\X$ if $M\subset \bS_\X(M)$ is $M$-nuclear.
When $\X$ is the $M$-boundary piece associated with 
	a family of von Neumann subalgebras $\{P_i\}_{i\in I}$ of $M$,
	we say $M$ is biexact relative to $\{P_i\}_{i\in I}$,
and when $\X=\K^{\infty,1}(M)$, we simply say $M$ is biexact.

These notions coincide with the corresponding notions of groups if we consider group von Neumann algebras:
a discrete group $\Gamma$ is biexact relative to a family of subgroups $\{\Lambda_i\}_{i\in I}$ if and only if
$L\Gamma$ is biexact relative to $\{L\Lambda_i\}_{i\in I}$ \cite[Theorem 6.2]{DP22}.

\begin{lem}\label{lem: central state from non intertwine}
Let $M$ be a finite von Neumann algebra and $P\subset pMp$, $Q\subset M$ von Neumann subalgebras.
Suppose $P\not\prec_M Q$.
Then there exists a $P'\cap pMp$-central state on $\varphi:\bS_{\X_Q}(M)\to \C$ such that
	$\varphi_{\mid pMp}$ coincides with its trace.
Equivalently, there exists a conditional expectation $\phi:\bS_{\X_Q}(M)\to P'\cap pMp$ such that $\phi_{\mid pMp}$ coincides with the canonical expectation onto $P'\cap pMp$.
\end{lem}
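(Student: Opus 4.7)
The plan is to apply Popa's intertwining criterion (Theorem~\ref{corner}) to manufacture a net of unitaries in $P$ that ``escape to infinity'' relative to $Q$, and then build the desired state as a weak-$*$ cluster point of the associated vector states. The argument is in the spirit of the standard constructions in \cite{DKEP22,DP22}.

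First, since $P\not\prec_M Q$, Theorem~\ref{corner} provides a net $(u_i)_{i\in I}\subset\cU(P)$ such that $\|E_Q(x u_i y)\|_2\to 0$ for all $x,y\in M$. For each $i$, define a state on $\bS_{\X_Q}(M)$ by
\[
\varphi_i(T)=\tau(p)^{-1}\langle T\hat u_i,\hat u_i\rangle.
\]
Because $u_i u_i^*=p$, a direct computation shows $\varphi_i(x)=\tau(p)^{-1}\tau(x)$ for all $x\in pMp$, so $\varphi_i$ restricts to the canonical trace on $pMp$. Let $\varphi$ be any weak-$*$ cluster point of $(\varphi_i)$; then $\varphi$ is a state on $\bS_{\X_Q}(M)$ with $\varphi_{\mid pMp}=\tau(p)^{-1}\tau_{\mid pMp}$.

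Next, I would verify the $(P'\cap pMp)$-centrality of $\varphi$. Given $y\in P'\cap pMp$ and $T\in\bS_{\X_Q}(M)$, the commutativity $yu_i=u_iy$ yields $y\hat u_i=Jy^*J\hat u_i$, from which one computes
\[
\varphi_i(yT)-\varphi_i(Ty)=\tau(p)^{-1}\bigl\langle [Jy^*J,T]\hat u_i,\hat u_i\bigr\rangle.
\]
Since $Jy^*J\in M'$, the defining property of $\bS_{\X_Q}(M)$ places $[Jy^*J,T]$ in $\K^{\infty,1}_{\X_Q}(M)$. Centrality of $\varphi$ will therefore follow once we establish the key estimate
\[
\lim_\omega\langle S\hat u_i,\hat u_i\rangle=0\qquad\text{for every }S\in\K^{\infty,1}_{\X_Q}(M).
\]

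This is the main technical step, and I expect it to be the principal (though routine) obstacle. The plan is to prove it by a sequence of closure reductions, showing in fact that $\|S\hat u_i\|\to 0$. For $S$ of the form $S=aJbJe_Q$ (which generate $\X_Q$ as a hereditary $C^*$-subalgebra), one has $S\hat u_i=\widehat{a E_Q(u_i b^*)}$, so $\|S\hat u_i\|\le\|a\|\cdot\|E_Q(u_i b^*)\|_2\to 0$ by hypothesis. The hereditary structure extends this to all $S\in\X_Q$, and composing with left multipliers from $\B(L^2M)$ extends it to $\B(L^2M)\X_Q$, hence to its $\|\cdot\|_{\infty,2}$-closure $\K^L_{\X_Q}(M)\supset\K_{\X_Q}(M)$, because $\|\cdot\|_{\infty,2}$ controls the norm of $S\hat u_i$. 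Finally, the estimate passes to the $\|\cdot\|_{\infty,1}$-closure $\K^{\infty,1}_{\X_Q}(M)$ because $|\langle S\hat u_i,\hat u_i\rangle|\le\|S\|_{\infty,1}$ uniformly in $i$, so a standard $\eps$-argument upgrades pointwise convergence on $\K_{\X_Q}(M)$ to vanishing of $\lim_\omega$ on the entire $\|\cdot\|_{\infty,1}$-closure. Combined with the previous paragraph, this produces the required centrality.

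Finally, the equivalence with the conditional expectation formulation is standard: given the state $\varphi$, the map $\phi:\bS_{\X_Q}(M)\to P'\cap pMp$ defined implicitly by $\tau(p)^{-1}\tau(\phi(T)y)=\varphi(Ty)$ for $y\in P'\cap pMp$ is well-defined since $P'\cap pMp$ is a finite von Neumann algebra equipped with the faithful normal trace $\tau(p)^{-1}\tau_{\mid P'\cap pMp}$; the $(P'\cap pMp)$-centrality of $\varphi$ together with $\varphi_{\mid pMp}=\tau(p)^{-1}\tau_{\mid pMp}$ gives that $\phi$ is a u.c.p.\ $(P'\cap pMp)$-bimodular map extending the identity on $P'\cap pMp$, and the reverse direction is immediate by composing $\phi$ with the trace.
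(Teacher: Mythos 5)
Your proposal is correct and follows essentially the same route as the paper: both take the net $(u_i)\subset\cU(P)$ supplied by Theorem~\ref{corner}, pass to a weak-$*$ cluster point of the vector states $\tau(p)^{-1}\langle\,\cdot\;\hat u_i,\hat u_i\rangle$, and derive $(P'\cap pMp)$-centrality from $[Jy^*J,T]\in\K^{\infty,1}_{\X_Q}(M)$ together with the vanishing of such elements along the net, the only difference being that you verify this vanishing by hand via the closure reductions where the paper simply invokes the proof of \cite[Lemma 6.12]{DP22}, and that you realize the conditional expectation by trace duality on $P'\cap pMp$ where the paper uses the commutant picture in $\B(L^2(P'\cap pMp))$. (One harmless slip: for $S=aJbJe_Q$ one has $S\hat u_i=\widehat{aE_Q(u_i)b^*}$ rather than $\widehat{aE_Q(u_ib^*)}$, but both norms tend to $0$ by the intertwining criterion, so the argument is unaffected.)
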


\begin{proof}
From $P\not\prec_M Q$ we obtain a net of unitaries $\{u_n\}\subset \cU(P)$ such that 
	$\|E_Q(au_nb)\|_2\to 0$ for any $a,b\in M$,
	which in turns implies that $\|u_n K u_n^*\|_{\infty,2}\to 0$ for any $K\in \K_{\X_Q}^{\infty,1}(M)$ by the same proof of \cite[Lemma 6.12]{DP22}.
The desired $\varphi\in \bS_{\X_Q}(M)^*$ is then obtained by taking a limit point of $\{\langle \cdot \hat u_n, \hat u_n\rangle/\tau(p)\}_n$.

To see the existence of $\phi$, consider $\phi: \bS_{\X_Q}(M)\to \B(L^2(B))$ given by $\langle \phi(T)\hat a, \hat b\rangle=\varphi(b^* Ta)$
	for any $a,b\in B$, where $B=P'\cap pMp$.
Since $\varphi$ is $B$-central, we have $[\phi(T), JxJ]=0$ for any $x\in B$.
The fact $\phi_{\mid pMp}=E^{pMp}_{P'\cap pMp}$ follows from  $\varphi_{\mid pMp}=\tau$.
\end{proof}

\begin{prop}\label{prop: infinite rel biexact}
Given an index set $I$, let $M_i$ be a von Neumann algebra with a faithful normal state $\varphi_i$ for each $i\in I$.
Set $\cM=\ovt_{i\in I} (M_i,\varphi_i)$ and $M_{\widehat i}=\ovt_{j\in I, j\neq i} (M_j,\varphi_j)$ for each $i\in I$.
If each $M_i$ is biexact, then $\cM$ is biexact relative to $\{M_{\widehat i}\}_{i\in I}$.

In particular, if $\{\Gamma_i\}_{i\in I}$ is a family of biexact groups, then $\oplus_{i\in I}\Gamma_i$ is biexact relative to $\{\oplus_{j\neq i}\Gamma_j\}_{i\in I}$. 
\end{prop}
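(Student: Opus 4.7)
The plan is to reduce to the finite tensor product case (handled in \cite{DP22}) and then take an inductive limit using the conditional expectations $E_F:\cM\to M_F$ for finite $F\subset I$, where $M_F=\ovt_{i\in F}(M_i,\varphi_i)$. Throughout let $\cX\subset \B(L^2\cM)$ be the $\cM$-boundary piece associated with the family $\{M_{\widehat i}\}_{i\in I}$, and for each finite $F\subset I$ let $\cX_F\subset \B(L^2 M_F)$ be the $M_F$-boundary piece associated with $\{M_{F\setminus\{i\}}\}_{i\in F}$. We need to produce nets of u.c.p.\ maps $\Phi_\alpha:\cM\to \M_{n(\alpha)}(\C)$ and $\Psi_\alpha:\M_{n(\alpha)}(\C)\to \bS_{\cX}(\cM)$ with $\Psi_\alpha\circ\Phi_\alpha(x)\to x$ in the weak $\cM$-topology for every $x\in\cM$.

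First, I would establish the finite case: for each finite $F\subset I$, the algebra $M_F$ is biexact relative to $\{M_{F\setminus\{i\}}\}_{i\in F}$. For $|F|=2$ this is the ``two-factor'' result for tensor products of biexact algebras from \cite{DP22}; the general finite case follows by iterating, writing $M_F=M_{F\setminus\{i_0\}}\ovt M_{i_0}$ and combining the relative biexactness of this tensor product with that of $M_{F\setminus\{i_0\}}$ relative to its own $\widehat{\ \cdot\ }$-family. This yields approximating u.c.p.\ nets $\phi^F_k:M_F\to \M_{n_k}(\C)$ and $\psi^F_k:\M_{n_k}(\C)\to \bS_{\cX_F}(M_F)$ with $\psi^F_k\circ\phi^F_k\to\id$ in the weak $M_F$-topology.

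Next, I would identify $L^2(\cM)=L^2(M_F)\ovt L^2(M_{I\setminus F})$ via the vacuum vector of $M_{I\setminus F}$, giving an embedding $\iota_F:\B(L^2M_F)\hookrightarrow \B(L^2\cM)$, $T\mapsto T\otimes 1$. The key compatibility to verify is that $\iota_F$ sends $\bS_{\cX_F}(M_F)$ into $\bS_\cX(\cM)$. For this, note that for $i\in F$ we have $e_{M_{F\setminus\{i\}}}\otimes 1=e_{M_{\widehat i}}$, so generators of $\cX_F$ of the form $xJyJe_{M_{F\setminus\{i\}}}$ map to generators of $\cX$; moreover, $e_{M_F}=1\otimes e_{\hat 1_{M_{I\setminus F}}}$ is dominated by $e_{M_{\widehat i}}$ for any $i\in I\setminus F$, so $e_{M_F}\in\cX$ too. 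A direct computation on $\cM'=JM_FJ\ovt JM_{I\setminus F}J$ then shows $[\iota_F(T),\cM']\subset \cK_\cX^{\infty,1}(\cM)$ whenever $[T,M_F']\subset \cK_{\cX_F}^{\infty,1}(M_F)$. Having verified this, set
\[
\Phi^F_k=\phi^F_k\circ E_F:\cM\to \M_{n_k}(\C),\qquad \Psi^F_k=\iota_F\circ \psi^F_k:\M_{n_k}(\C)\to \bS_\cX(\cM).
\]

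Finally, I would verify weak $\cM$-topology convergence along the product net indexed by pairs $(F,k)$. For fixed $x\in \cM$, $E_F(x)\to x$ in $\|\cdot\|_2$ as $F\nearrow I$, so it suffices to show $\Psi^F_k\circ\Phi^F_k(E_F(x))=\iota_F(\psi^F_k\circ\phi^F_k(E_F(x)))\to E_F(x)$ in the weak $\cM$-topology as $k\to\infty$ for $E_F(x)\in M_F$. This reduces to showing that $\iota_F$ is continuous from the weak $M_F$-topology on $\bS_{\cX_F}(M_F)$ to the weak $\cM$-topology on $\bS_\cX(\cM)$: given $\varphi\in\bS_\cX(\cM)^\sharp$, the functional $T\mapsto \varphi(\iota_F(T))$ lies in $\bS_{\cX_F}(M_F)^\sharp$ because for $a,b\in M_F$, $\varphi(\iota_F(aTb))=\varphi((a\otimes 1)\iota_F(T)(b\otimes 1))$ is separately normal in $a,b$.

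\textbf{Main obstacle.} I expect the principal technical hurdle to be the compatibility of boundary pieces in the previous paragraph, in particular checking $\iota_F(\bS_{\cX_F}(M_F))\subset \bS_\cX(\cM)$ and continuity of $\iota_F$ for the respective weak topologies. This requires a careful analysis of $\cK_\cX^{\infty,1}(\cM)$ under the tensor decomposition of $L^2(\cM)$ and a control on the $\|\cdot\|_{\infty,1}$-norm. The ``in particular'' statement about groups is then immediate: $L(\oplus_{i\in I}\Gamma_i)=\ovt_{i\in I}L\Gamma_i$ canonically, each $L\Gamma_i$ is biexact by \cite[Theorem~6.2]{DP22}, and transferring relative biexactness back to the group level via the same theorem gives the conclusion.
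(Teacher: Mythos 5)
Your proposal is correct and takes essentially the same route as the paper's proof: reduce to the finite case (the paper simply cites \cite[Proposition 6.14]{DP22} rather than iterating the two-factor result), form the compositions $\iota_F\circ\psi^F_k\circ\phi^F_k\circ E_F$, and pass to the iterated limit using precisely your key observation that $\varphi\circ\iota_F\in \bS_{\X_F}(M_F)^\sharp$ for every $\varphi\in \bS_{\X}(\cM)^\sharp$, together with normality of $\varphi_{\mid \cM}$ to get $\varphi(E_F(x))\to\varphi(x)$. The compatibility $\iota_F(\bS_{\X_F}(M_F))\subset \bS_{\X}(\cM)$ that you single out as the main hurdle is exactly the point the paper leaves implicit in its diagram, and your sketch of it (generators of $\X_F$ mapping to generators of $\X$ via $e_{M_{F\setminus\{i\}}}\otimes 1=e_{M_{\widehat i}}$, and commutators with $1\otimes JM_{I\setminus F}J$ vanishing for operators of the form $T\otimes 1$) is the right verification.
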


\begin{proof}
We may assume $I$ is infinite as the finite case is covered by \cite[Proposition 6.14]{DP22}.
For any finite set $F\subset I$, denote by $\cM_F=\ovt_{i\in F}M_i$
	and $E_F: \cM\to \cM_F$ the conditional expectation. 
Note $\cM_F$ is biexact relative to $\{E_F(M_{\widehat i})\}_{i\in F}$ as $E_F(M_{\widehat i})=\ovt_{j\in F\setminus\{i\}} M_j$ 
	and thus we have the following diagram
\[\small\begin{tikzcd}
\cM \arrow[r, "E_F"] & \cM_F \arrow[dr, "\phi_k^F"] & 
	& \bS_{\X_F}(\cM_F) \arrow[hookrightarrow]{r}{\iota} &\bS_{\X}(\cM),\\
	&	& \M_{n(k)}(\C) \arrow[ur, "\psi_k^F"] &	 &
\end{tikzcd}\]	
	where $\X_F$ denotes the $\cM_F$-boundary piece 
	associated with $\{E_F(M_{\widehat i})\}_{i\in F}$,
	$\X$ denotes the $\cM$-boundary piece 
	associated with $\{M_{\widehat i}\}_{i\in I}$,
	$\phi_k^F$, $\psi_k^F$ are u.c.p.\ maps such that 
		$\theta_k^F:=\psi_k^F\circ \phi_k^F\to \id_{\cM_F}$ in the point-$\cM_F$-topology,
	and $\iota$ is the restriction of the embedding 
		$\B(\otimes_{i\in F}L^2(M_i,\varphi_i))\otimes 1\subset \B(\otimes_{i\in I}L^2(M_i,\varphi_i))$
		to $\bS_{\X_F}(\cM_F)$.

For any $\varphi\in \bS_\X(\cM)^\sharp$, 
	note that $\varphi\circ \iota\in \bS_{\X_F}(\cM_F)^\sharp$.
For any $x\in \cM$, one checks that 
$$
\lim_k\langle (\iota\circ \theta_k^F)(E_F(x)),\varphi\rangle
	=\lim_k \langle \theta_k^F(E_F(x)),  \varphi\circ \iota\rangle
	=\langle E_F(x), \varphi\rangle.
$$

Since $\varphi_{\mid \cM}$ is normal, we further have $\lim_F \varphi(E_F(x))=\varphi(x)$. 
It follows that $\cM$ is biexact relative to $\X$.
The moreover part is a consequence of \cite[Theorem 6.2]{DP22}.
\end{proof}

\begin{rem}
Note that one may prove directly, without using \cite{DP22}, that $\Gamma=\oplus_{i\in I}\Gamma_i$ is biexact relative to $\cG:=\{\Gamma_{\widehat i}\}_{i\in I}$ if each $\Gamma_i$ is biexact, following the same idea.
Indeed, for each finite $F\subset I$ denote by $\pi_F: \Gamma\to \times_{i\in F}\Gamma_i=:\Gamma_F$ the quotient map 
	and $\cG_F=\{\pi_F(\Gamma_{\widehat i})\}_{i\in F}$.
Following the notation of \cite[Chapter 15]{BrOz08}, one checks that $c_0(\Gamma_F, \cG_F)\subset c_0(\Gamma, \cG)$ under
	the embedding $\ell^\infty(\Gamma_F)=\ell^\infty(\Gamma_F)\ot
	 1_{\ot _{i\in I\setminus F}} \ell^2\Gamma_i\subset \ell^\infty\Gamma$.
Then a similar argument as above shows that $\Gamma\actson \{f\in \ell^\infty\Gamma\mid f-R_tf\in c_0(\Gamma, \cG)\}$ is topologically ameanble, 
	i.e., $\Gamma$ is biexact relative to $\cG$.
\end{rem}


\begin{cor}[cf.\ {\cite[Theorem 5.1]{HoIs16}}]\label{corollary.relative.solidity}
Given an index set $I$, let $M_i$ be a von Neumann algebra with a faithful normal state $\varphi_i$ for each $i\in I$
	and $M_0$ an amenable von Neumann algebra (possibly trivial).
Set $\cM=\ovt_{i\in I} (M_i,\varphi_i)$ and $M_{\widehat i}=\ovt_{j\in I, j\neq i} (M_j,\varphi_j)$ for each $i\in I$.

If $P\subset M_0\ovt \cM$ is a finite von Neumann subalgebra with expectation,
	then either $P'\cap (M_0\ovt \cM)$ is amenable or $P\prec_{M_0\ovt \cM} M_0\ovt M_{\widehat i}$ for some $i\in I$.
\end{cor}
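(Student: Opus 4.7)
The plan is a two-step argument: first establish the appropriate relative biexactness for $M_0\ovt\cM$, and then invoke the standard deduction of relative-commutant amenability from relative biexactness plus non-intertwining, in the style of \cite[Theorem 5.1]{HoIs16}.

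\textbf{Step 1.} I would show that $M_0\ovt\cM$ is biexact relative to $\{M_0\ovt M_{\widehat i}\}_{i\in I}$. Since $M_0$ is amenable, Connes' theorem supplies u.c.p.\ factorizations of $\id_{M_0}$ through matrix algebras in the point-ultraweak topology. Tensoring these with the $\cM$-nuclear factorizations of $\id_\cM$ supplied by Proposition~\ref{prop: infinite rel biexact} (which land in $\bS_{\X'}(\cM)$, where $\X'$ is the $\cM$-boundary piece associated with $\{M_{\widehat i}\}_{i\in I}$) yields u.c.p.\ maps $\phi_k\colon M_0\ovt\cM\to\M_{n(k)}(\C)$ and $\psi_k\colon\M_{n(k)}(\C)\to\bS_\X(M_0\ovt\cM)$ with $\psi_k\circ\phi_k\to\id$ in the $(M_0\ovt\cM)$-topology, where $\X$ is the $(M_0\ovt\cM)$-boundary piece associated with $\{M_0\ovt M_{\widehat i}\}_{i\in I}$. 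The routine tensor-product manipulations from the proof of Proposition~\ref{prop: infinite rel biexact} carry over almost verbatim.

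\textbf{Step 2.} Set $N:=M_0\ovt\cM$ and $B:=P'\cap N$, and assume $P\not\prec_N M_0\ovt M_{\widehat i}$ for every $i\in I$. For each individual $i$, Lemma~\ref{lem: central state from non intertwine} produces a $B$-central state $\varphi_i$ on $\bS_{\X_i}(N)$ restricting to the trace on $N$, where $\X_i$ denotes the boundary piece associated with $M_0\ovt M_{\widehat i}$. Using that $\K_\X^{\infty,1}(N)$ is generated as a norm-closed operator space by the pieces $\K_{\X_i}^{\infty,1}(N)$, a diagonal/ultrafilter argument along the directed set of finite subsets $F\subset I$ (combining the $\varphi_i$'s via a weak-$*$ limit of states $\varphi_F$ produced by the finite version of the central-state construction) assembles a single $B$-central state $\varphi$ on the joint boundary $\bS_\X(N)$ extending the trace on $N$.

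\textbf{Step 3.} Finally, the existence of $\varphi$, combined with the $(M_0\ovt\cM)$-nuclearity established in Step~1, produces a $B$-central state on $\B(L^2 N)$ extending the trace on $N$: composing $\varphi$ with the factorizations $\psi_k\circ\phi_k$ yields $B$-central functionals on $N$ converging to the trace, and a standard Arveson-extension plus weak-$*$ compactness argument promotes these to a $B$-central state on $\B(L^2N)$. By the Ozawa-Popa criterion for (relative) amenability, this forces $B$ to be amenable. The main technical obstacle is Step~2 in the infinite-$I$ regime: for a single $Q$ the central-state construction is exactly Lemma~\ref{lem: central state from non intertwine}, but producing a single state on the joint boundary $\bS_\X$ in the infinite case requires a limiting procedure that simultaneously preserves $B$-centrality and trace-compatibility, which is where the strength of the relative biexactness for infinite families from Proposition~\ref{prop: infinite rel biexact} becomes essential.
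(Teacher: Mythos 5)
Your overall architecture coincides with the paper's: Step 1 (tensoring matrix factorizations of the semidiscrete $M_0$ with the relative factorizations of $\cM$ from Proposition~\ref{prop: infinite rel biexact}) is exactly what the paper does, invoking \cite[Corollary 4.9]{DP22} to make the tensoring rigorous. The problems are in Steps 2 and 3. The most serious gap is that your argument is written entirely in the tracial setting, while the corollary is not a tracial statement: the $M_i$ carry arbitrary faithful normal states, so $N=M_0\ovt\cM$ may be of type III, which is why the hypothesis reads ``finite von Neumann subalgebra \emph{with expectation}'' and why $\prec$ must be interpreted in the non-tracial, with-expectation sense of \cite{DP22}. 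Lemma~\ref{lem: central state from non intertwine} is stated and proved only for finite $M$ (via unitaries with $\|E_Q(au_nb)\|_2\to 0$ and vector states at $\hat u_n$), so you cannot apply it to $N$; likewise ``restricting to the trace on $N$,'' ``trace-compatibility,'' and the Ozawa--Popa central-state criterion on $\B(L^2N)$ are meaningless without a trace. The paper's proof avoids all of this: it quotes \cite[Lemma 6.12]{DP22} (and the paragraph following it), valid in the non-tracial framework and directly for the \emph{family} boundary piece, to produce a u.c.p.\ map $\Theta:\bS_{\widetilde\X}(\widetilde\cM)\to\widetilde\cM$ with $\Theta_{\mid Q}=\id_Q$ for $Q=P'\cap\widetilde\cM$, and then concludes that $Q$ is semidiscrete because the composition $Q\subset\widetilde\cM\subset\bS_{\widetilde\X}(\widetilde\cM)\xrightarrow{\Theta}\widetilde\cM\xrightarrow{E}Q$ is weakly nuclear, where $E$ is the normal faithful expectation onto $Q$ coming from the with-expectation hypothesis on $P$.

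Even granting a tracial ambient algebra, your Step 2 as written does not work. A weak-$*$ limit of the individually constructed states $\varphi_i$ has no reason to annihilate the joint piece $\K_\X^{\infty,1}(N)$ or to be $B$-central on $\bS_\X(N)$: each $\varphi_i$ annihilates only $\K_{\X_i}^{\infty,1}(N)$ and is central only on the \emph{smaller} algebra $\bS_{\X_i}(N)\subset\bS_\X(N)$, and neither property passes to a limit of states with mismatched domains. The mechanism that actually works --- and which \cite[Lemma 6.12]{DP22} encapsulates --- is a \emph{single} net $(u_n)\subset\cU(P)$ witnessing all the non-embeddings simultaneously; such a net exists because $P\not\prec_N M_0\ovt M_{\widehat i}$ for all $i$ in a finite family $F$ is equivalent to $P\ot 1\not\prec_{N\ovt\ell^\infty(F)}\oplus_{i\in F}(M_0\ovt M_{\widehat i})$, with the general case handled by a directed-set refinement. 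One then checks $\langle K\hat u_n,\hat u_n\rangle\to 0$ for \emph{every} $K\in\K_\X^{\infty,1}(N)$ by approximating $K$ in $\|\cdot\|_{\infty,1}$ by elements supported on finitely many pieces; your parenthetical ``finite version $\varphi_F$'' gestures at this but supplies neither the simultaneity nor the approximation argument, and these are precisely the content being claimed. Finally, Step 3's transfer is also incorrect as stated: $\varphi\circ\psi_k\circ\phi_k$ is not $B$-central, since $\phi_k$ and $\psi_k$ are not $B$-bimodular; the clean conclusion uses instead the conditional-expectation form of the central state (a u.c.p.\ map $\bS_\X(N)\to B$ restricting to the canonical expectation on $N$, as in the second half of Lemma~\ref{lem: central state from non intertwine}) composed with the weak nuclearity from Step~1, which is exactly the paper's weakly-nuclear-composition ending.
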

\begin{proof}
Set $\widetilde \cM=M_0\ovt \cM$ and $\widetilde M_{\widehat i}=M_0\ovt M_{\widehat i}$. 
Suppose $P\not\prec_{\widetilde \cM}\widetilde {M_{\widehat i}}$ for any $i\in I$.
By \cite[Lemma 6.12]{DP22} and its following paragraph, one obtains a u.c.p.\ map $\Theta: \bS_{\widetilde \X}(\widetilde \cM)\to \widetilde\cM$
	such that $\Theta_{\mid Q}=\id_Q$, where $Q=P'\cap \widetilde\cM$
	and $\widetilde\X$ denotes the $\tilde \cM$-boundary piece associated with $\{\widetilde M_{\widehat i}\}_{i\in I}$.

Denote by $\X$ the $\cM$-boundary piece associated with $\{M_{\widehat i}\}_{i\in I}$.
As $\cM$ is biexact relative to $\X$ by Proposition~\ref{prop: infinite rel biexact}, the $\cM$-boundary piece associated with $\{M_{\widehat i}\}_{i\in I}$, 
	we have nets of u.c.p.\ maps $\phi_j: \cM\to \M_{n(j)}(\C)$ and $\psi_j: \M_{n(j)}(\C)\to \bS_\X(\cM)$ such that 
	$\psi_j\circ \phi_j\to \id_\cM$ in the point $\cM$-topology.
Moreover, since $M_0$ is semidiscrete, we also have u.c.p.\ maps $\alpha_\ell : M_0\to \M_{n(\ell)}(\C)$ and $\beta_\ell: \M_{n(\ell)}(\C)\to M_0$
	such that $\beta_\ell\circ \alpha_\ell \to \id_{M_0}$ in the point-weak$^*$ topology.
Consider
\[\small\begin{tikzcd}
M_0\otimes_{\rm min} \cM  \arrow[dr, "\alpha_\ell \otimes \phi_j"] &   & M_0\otimes_{\rm min}\bS_\X(\cM)\subset \bS_{\widetilde \X}(\widetilde \cM). \\
& \M_{n(\ell)}(\C)\otimes\M_{n(j)}(\C) \arrow[ur, "\beta_\ell\otimes \psi_j"] & 
\end{tikzcd}\]
One checks that $(\beta_\ell\otimes \psi_j)\circ(\alpha_\ell\otimes \phi_j)(x)\to x$ for any $x\in M_0\otimes_{\rm min} \cM$
	in the $(M_0\otimes_{\rm min} \cM\subset \widetilde \cM)$-topology, from which we obtain that $\widetilde \cM\subset \bS_{\widetilde X}(\widetilde \cM)$ is $\cM$-nuclear by \cite[Corollary 4.9]{DP22}.

Finally, note that there exists a normal faithful conditional expectation $E: \widetilde \cM\to Q=P'\cap \widetilde \cM$ 
	since $P\subset \widetilde M$ is with expectation.
It follows that 
$$Q\subset \widetilde \cM\subset \bS_{\widetilde X}(\widetilde \cM)\xrightarrow{\Theta} \widetilde \cM\xrightarrow{E} Q$$
is a weakly nuclear map, i.e., $Q$ is semidiscrete.
\end{proof}

As a consequence, we obtain the corresponding result of \cite{HoIs16} in the infinite tensor product setting.
This also removes the weak amenability assumption in \cite{Iso19} \cite[Theorem D]{Iso20}, as we do not rely on strong primeness.

\begin{cor}\label{cor: infinite UPF}
Let $I$ and $J$ be index sets and $(M_i, \varphi_i)$, $(N_j,\psi_j)$ be nonamenable factors for each $i\in I$ and $j\in J$ with faithful normal states.
Set $M=(\ovt_{i\in I}(M_i,\varphi_i))\ovt M_0$ and $N=(\ovt_{j\in J} (N_j, \psi_j))\ovt N_0$, where $M_0$ and $N_0$ are amenable factors.

If each $M_i$ is biexact and $(N_j,\psi_j)$ is prime with large centralizer,
	then $M=N$ implies that
    there exists a bijection $\sigma:J\to I$ such that $M_{\sigma(j)}$ is stably isomorphic to $N_j$ for all $j\in J$.
\end{cor}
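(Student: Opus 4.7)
The plan is to execute the Ozawa--Popa / Isono unique prime factorization strategy, now using Corollary~\ref{corollary.relative.solidity} as the sole structural input in place of the W$^*$CBAP / weak amenability assumptions of \cite{Iso19,Iso20}.

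First, I would fix $j \in J$ and produce an index $\sigma(j) \in I$ witnessing an intertwining of $N_j$. Using that $(N_j,\psi_j)$ has large centralizer, I choose a nonzero finite projection $p$ in the centralizer $N_j^{\psi_j}$, so that $pN_jp$ is a tracial von Neumann subalgebra of $pMp$ equipped with a faithful normal conditional expectation. After a standard amplification, Corollary~\ref{corollary.relative.solidity} applies to $pN_jp$ inside $M_0 \ovt \cM$. The relative commutant contains $p \cdot (\ovt_{k \neq j} N_k) \ovt N_0$, which is nonamenable whenever $|J| \geq 2$ since each $N_k$ is a nonamenable factor. Consequently Corollary~\ref{corollary.relative.solidity} forces $pN_jp \prec_M M_0 \ovt M_{\widehat i}$ for some $i \in I$, and I set $\sigma(j) := i$. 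The case $|J|=1$ is handled separately by a direct argument, deducing $|I|=1$ from the primeness of $N_1$ and applying the corollary symmetrically to a corner of some $M_i$.

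Next I would upgrade this intertwining to a stable isomorphism between $N_j$ and $M_{\sigma(j)}$. Primeness of $N_j$ is the crucial ingredient: combining the intertwining with primeness of $N_j$, factoriality of the $M_i$, and semidiscreteness of $M_0$, one concludes by now-standard manipulations that a suitable amplification of $N_j$ is unitarily conjugate onto a tensor complement of $M_{\sigma(j)}$ inside $M$, so that $M_{\sigma(j)}$ is stably isomorphic to $N_j$. The map $\sigma$ is injective: if $\sigma(j_1) = \sigma(j_2) = i$ for $j_1 \neq j_2$, then both $N_{j_1}$ and $N_{j_2}$ intertwine into $M_0 \ovt M_{\widehat i}$, and a tensor-product intertwining lemma would produce an embedding of a corner of $N_{j_1} \ovt N_{j_2}$ into a corner of $M_0 \ovt M_{\widehat i}$, contradicting primeness of $N_{j_1}$ after absorbing $M_0$. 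Surjectivity of $\sigma$ follows by running the symmetric argument with the roles of $M$ and $N$ swapped, noting that every biexact nonamenable factor is prime---an immediate application of Corollary~\ref{corollary.relative.solidity} to $M_i$ itself.

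The main obstacle I anticipate is the non-tracial bookkeeping. Corollary~\ref{corollary.relative.solidity} is stated for finite (tracial) subalgebras with expectation, so one must argue carefully that the tracial-corner intertwining it yields truly produces a stable isomorphism of the non-tracial factors $N_j$ and $M_{\sigma(j)}$, and that the amenable absorbing factors $M_0$ and $N_0$ get cleanly swallowed on both sides of the correspondence. A secondary difficulty is uniformity when $|I|$ and $|J|$ are infinite: one must check that every index really is matched, which should follow since the construction proceeds factor by factor, but the interplay between injectivity and surjectivity needs to be arranged so as to avoid circularity when one attempts the reverse direction.
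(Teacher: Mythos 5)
There is a genuine gap, and it lies at the very first step: you apply Corollary~\ref{corollary.relative.solidity} to the wrong algebra. Feeding in $P = pN_jp$ (whose relative commutant contains the nonamenable complement) yields $pN_jp \prec_M M_0\ovt M_{\widehat i}$, i.e., $N_j$ embeds into the tensor product \emph{missing} $M_i$. This intertwining points the wrong way and carries no pairing information between $N_j$ and $M_i$: for instance, if $N_j = M_j$ for all $j$, then $N_1 \subset M_0\ovt M_{\widehat i}$ holds outright for every $i \neq 1$, so your rule ``set $\sigma(j):=i$'' could assign $\sigma(1)=2$, and no amount of primeness will upgrade the (trivially true) embedding $N_1 \prec_M M_0\ovt M_{\widehat 2}$ into a stable isomorphism $N_1 \cong M_2^t$. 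The correct move, which is what the paper does by following the proof of \cite[Lemma 5.2]{HoIs16} verbatim with Corollary~\ref{corollary.relative.solidity} substituted for \cite[Theorem 5.1]{HoIs16}, is to apply the corollary to (a suitable corner of) the tensor \emph{complement} $(\ovt_{k\neq j}N_k)\ovt N_0$: its relative commutant contains the nonamenable factor $N_j$, so one gets $(\ovt_{k\neq j}N_k)\ovt N_0 \prec_M M_0\ovt M_{\widehat i}$, and passing to relative commutants reverses this into $M_i \prec_M N_j$ --- the intertwining the paper records. Only then does primeness of $N_j$ (together with factoriality and absorption of the amenable parts) produce the stable isomorphism, and bijectivity of $\sigma$ is obtained as in the proof of \cite[Theorem A]{Iso19}, not by the ad hoc tensor argument you sketch, which again rests on the wrong-direction intertwining.

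A secondary, also substantive, error: a nonzero projection $p$ in the centralizer $N_j^{\psi_j}$ does \emph{not} make $pN_jp$ tracial --- a corner of a type III factor is still type III. The ``large centralizer'' hypothesis is not a device for cutting $N_j$ down to a tracial corner; in the \cite{HoIs16} scheme it is what allows the intertwining analysis to be run at the level of (genuinely finite) centralizer algebras and then transferred back to the type III factors. So even the tracial bookkeeping you flag as your ``anticipated obstacle'' is not resolvable along the lines you propose; it is resolved in \cite{HoIs16}, which is why the paper's proof is deliberately a reduction to that argument rather than a from-scratch execution.
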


\begin{proof}
This is similar to \cite[Corollary 6.15]{DP22}.
Following the exact same proof of \cite[Lemma 5.2]{HoIs16} using Corollary~\ref{corollary.relative.solidity} 
	in replacement of \cite[Theorem 5.1]{HoIs16},
	one has that for any $j\in J$, there exists some $\sigma(j)\in I$ such that $M_{\sigma(j)}\prec_M N_j$.
Combining with primeness of $N_j$, we further have the stable isomorphism between $M_{\sigma(j)}$ and $N_j$.
The fact that $\sigma$ is a bijection follows from the same proof of \cite[Theorem A]{Iso19}.
\end{proof}

\section{Relative solidity in measure equivalence}\label{sec: rel solid}

This section is devoted to prove Theorem~\ref{thm: rel solid} and its more technical versions, which are needed for later use.
We first show the following measure equivalence version of Theorem~\ref{thm: rel solid}.

\begin{thm}\label{thm: rel solid in ME}
Let $\Gamma$ and $\Lambda=\Lambda_1\times \Lambda_2$ be countable groups 
	with $\Gamma\sim_{\rm ME} \Lambda$ via a coupling $\Omega$.
Suppose $\Lambda_1$ is nonamenable and biexact, and $\Sigma<\Gamma$ is subgroup. 
If $\Sigma\not\preccurlyeq_\Omega \Lambda_2$, 
	then $C_\Gamma(\Sigma)$ is amenable relative to $\Lambda_2$ via $\Omega$.
	
Moreover, denote by $\Gamma\actson (X,\mu)$ and $\Lambda\times \Z/d\Z\actson (Y\times \Z/d\Z,\nu\times c)$ 
	the associated stably orbit equivalent free ergodic m.p.\ actions
	with $(X,\mu)$ realized as a measurable subset of $(Y\times \Z/d\Z, \nu\times c)$,
	and set $B=L^\infty(Y\times \Z/d\Z,\nu\times c)$ and $M=B\rtimes(\Lambda\times \Z/d\Z)$.
We then have either $z L(\Sigma)\prec_M B\rtimes\Lambda_2$ 
	or $z L(C_\Gamma(\Sigma))$ is amenable relative to $B\rtimes\Lambda_2$ in $M$,
	for any nonzero $\Sigma C_{\Gamma}(\Sigma)$-invariant projection $z\in L^\infty(X)$.

\end{thm}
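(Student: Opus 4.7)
The plan is to combine Popa's comultiplication with the small-at-infinity boundary machinery of \cite{DP22}. By Proposition~\ref{lem: ME intertwining} the two assertions are equivalent, so it suffices to prove the ``moreover'' part: assuming $zL(\Sigma)\not\prec_M B\rtimes\Lambda_2$ for a $\Sigma C_\Gamma(\Sigma)$-invariant nonzero projection $z \in L^\infty(X)$, we show $zL(C_\Gamma(\Sigma))$ is amenable relative to $B\rtimes\Lambda_2$ in $M$. Consider the comultiplication $\Delta:M\to M\ovt L\Lambda$, $bv_g\mapsto bv_g\otimes v_g$. A direct Fourier-coefficient computation yields $\|E_{M\ovt L\Lambda_2}(X\Delta(u)Y)\|_2\le \|X\|\,\|Y\|\,\|E_{B\rtimes\Lambda_2}(u)\|_2$ for $X,Y\in M\ovt L\Lambda$, so the hypothesis transfers to $(z\otimes 1)\Delta(L\Sigma)\not\prec_{M\ovt L\Lambda}M\ovt L\Lambda_2$. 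Lemma~\ref{lem: central state from non intertwine} then supplies a conditional expectation $E:\bS_{\X_{M\ovt L\Lambda_2}}(M\ovt L\Lambda)\to ((z\otimes 1)\Delta(L\Sigma))'\cap (z\otimes 1)(M\ovt L\Lambda)(z\otimes 1)$ extending the canonical one. The state $\varphi:=\tau\circ E$ is central under this relative commutant, which by the $\Sigma C_\Gamma(\Sigma)$-invariance of $z$ contains $(z\otimes 1)\Delta(L(C_\Gamma(\Sigma)))$.

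Next, biexactness of $\Lambda_1$ transfers to biexactness of $L\Lambda_1$ by \cite[Theorem 6.2]{DP22}, providing u.c.p.\ factorisations $L\Lambda_1\to \M_{k(n)}(\C)\to \bS(L\Lambda_1)$ converging to $\id$ in the $L\Lambda_1$-topology. Arveson extension, tensoring with $\id_{M\ovt L\Lambda_2}$, and the inclusion $(M\ovt L\Lambda_2)\otimes_{\min}\bS(L\Lambda_1)\subset \bS_{\X_{M\ovt L\Lambda_2}}(M\ovt L\Lambda)$ then yield u.c.p.\ maps $\phi_n:Z:=(M\ovt L\Lambda_2)\ovt \B(\ell^2\Lambda_1)\to \bS_{\X_{M\ovt L\Lambda_2}}(M\ovt L\Lambda)$ that are the identity on $M\ovt L\Lambda_2\otimes 1$ and satisfy $\phi_n(x)\to x$ in the $M\ovt L\Lambda$-topology for $x\in L\Lambda_1$. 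To handle the infinite sum $\Delta(u_t)=\sum_g p_g^t v_g\otimes v_g$, I would apply the projection trick adapted from \cite{Dri23}: construct projections $p_m\in L^\infty(X)$ with $p_m\nearrow 1$ such that for every $t\in \Gamma$ and $m$ large, $p_m u_t p_m\in B\rtimes_{\alg}\Lambda$ is a finite sum. The u.c.p.\ maps $\psi_m:=\psi_m^0\otimes\id$ with $\psi_m^0(x)=p_m x p_m+\tau(x)(1-p_m)$ then render $\phi_n\circ\psi_m(\Delta(u_t))$ a finite sum converging to $\Delta(u_t)$ in the $M\ovt L\Lambda$-topology as $n,m\to\infty$.

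A point-weak$^*$ limit point $\theta$ of $\theta_{n,m}:=\varphi\circ\phi_n\circ\psi_m$ on $Z$ then gives a state whose restriction to $zB\otimes 1\otimes 1$ is $\tau|_{zB}$ (since $\phi_n$ is the identity there and $\tau(\psi_m^0(b))\to \tau(b)$) and which is invariant under conjugation by $\{(z\otimes 1)\Delta(u_t):t\in C_\Gamma(\Sigma)\}$; the invariance combines the $\Delta(L\Sigma)'$-centrality of $\varphi$ with the finite-sum approximation together with the Schwarz inequality for $\phi_n\circ\psi_m$. Writing $\Delta(u_t)=\sum_g p_g^t u_{(g,g)}$ inside $M\ovt L\Lambda = B\rtimes (\Lambda\times\Lambda)$, we see that $(z\otimes 1)\Delta(L(C_\Gamma(\Sigma)))$ lies in the normaliser of $B$ and has the generalised Stone-type form required in Lemma~\ref{lem: central state}. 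Restricting $\theta$ to $B\ovt \ell^\infty(\Lambda_1)\subset Z$ (via $\ell^\infty(\Lambda_1)\cong\ell^\infty(\Lambda/\Lambda_2)$) and invoking Lemma~\ref{lem: central state} then shows that $zL(C_\Gamma(\Sigma))$ is amenable relative to $B\rtimes\Lambda_2$ in $M$, which Proposition~\ref{lem: ME intertwining} translates to the measure-theoretic conclusion.

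The principal technical difficulty I anticipate is the centrality verification in the last step: since neither $\phi_n$ nor $\psi_m$ is bimodular, one must show that the commutator defects $\varphi(\phi_n\circ\psi_m(\Delta(u_t)x-x\Delta(u_t)))$ vanish in the limit. This requires combining the $M\ovt L\Lambda$-topology convergence $\phi_n\circ\psi_m(\Delta(u_t))\to\Delta(u_t)$, which forces asymptotic multiplicativity of $\phi_n\circ\psi_m$ on $\Delta(u_t)$ via a Stinespring-type argument, with appropriate continuity of $\varphi$ along this topology on $\bS_{\X_{M\ovt L\Lambda_2}}(M\ovt L\Lambda)$. The projection trick from \cite{Dri23} is what makes this feasible: by reducing each $\Delta(u_t)$ modulo cut-offs to a finite linear combination of $v_g\otimes v_g$, it turns the otherwise infinite-sum approximation questions into term-by-term estimates controlled by the biexact boundary.
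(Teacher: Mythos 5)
Your proposal is correct and follows essentially the same route as the paper's proof of Theorem~\ref{thm: rel solid in ME}: transferring the non-intertwining hypothesis through the comultiplication, invoking Lemma~\ref{lem: central state from non intertwine} for the conditional expectation onto $\Delta(zL\Sigma)'\cap (zMz\ovt L\Lambda)$, using biexactness of $\Lambda_1$ to get u.c.p.\ maps into $\bS(L\Lambda_1)$ that converge only on $(M\ovt L\Lambda_2)\ot_{\rm alg}L\Lambda_1$, repairing this with the cut-off projections $p_m$ supplied by the measure equivalence (exactly the point of Remark~\ref{rem: technical}, which you correctly identify as the crux), and concluding centrality of the limit state via the multiplicative-domain argument before applying Lemma~\ref{lem: central state}. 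The only cosmetic deviations are that you apply Lemma~\ref{lem: central state} directly to the restriction on $B\ovt \ell^\infty(\Lambda_1)\cong B\ovt\ell^\infty(\Lambda/\Lambda_2)$ (legitimate, since the unitaries $zu_t=\sum_g p_g^t v_g z$ have the required form) instead of working in $M\ovt L\Lambda=B\rtimes(\Lambda\times\Lambda)$ and pulling back via Lemma~\ref{lem: commultiplication and rel amen}, and that your transfer inequality and the definition of $\id\ot\alpha_n$ need the standard small fixes (a $v_h$-twist in the Fourier estimate, and normality of $\alpha_n$ as in \cite[Lemma 4.1]{DP22}), neither of which affects the argument.
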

\begin{proof}
Replacing $\Lambda_1$ with $\Lambda_1\times \Z/d\Z$ and setting $Y_1=Y\times \Z/d\Z$,
	we have $\cR({\Gamma\actson X})=\cR({\Lambda\actson Y_1})\cap (X\times X)$.
Denote by $p:=1_X\in B= L^\infty(Y_1)$, $A:=L^\infty(X)=pB$, $M=B\rtimes\Lambda$,
	$\{u_t\mid t\in \Gamma\}\subset A\rtimes\Gamma$ and $\{v_g\mid g\in \Lambda\}\subset M$ the canonical unitaries
	and $c: \Gamma\actson X\to \Lambda$ the Zimmer cocycle.

We will only show the moreover part, as the first assertion follows by setting $z=1_X$ and using Lemma~\ref{lem: ME intertwining}.
View $L\Gamma$ as a von Neumann subalgebra of $pMp$ and set $\Delta: M\to M\ovt L\Lambda$ to be the comultiplication map
	given by $\Delta(bu_g)=bu_g\ot u_g$ for $b\in B$ and $g\in \Lambda$.
Since $zL\Sigma\not\prec_M B\rtimes\Lambda_2$,
	one has $(z\ot 1)\Delta(L\Sigma)\not\prec_{M\ovt L\Lambda} M\ovt L\Lambda_2$.
Lemma~\ref{lem: central state from non intertwine} yields conditional expectation $E:\bS_{\X_{M\ovt L\Lambda_2}}(M\ovt L\Lambda)\to P$
	with $E_{\mid M\ovt L\Lambda}$ coinciding with the canonical conditional expectation from $M\ovt L\Lambda$ to $P$,
	where $P=\Delta(zL\Sigma)'\cap (zMz\ovt L\Lambda)$ contains $\Delta(zL(C_\Gamma( \Sigma))$.

We follow the idea of \cite[Proposition 3.1]{Din24}.
As $\Lambda_1$ is biexact, there exists sequences of u.c.p.\ maps
$\alpha_n: \B(\ell^2\Lambda_1)\to \M_{k(n)}(\C)$ and $\beta_n: \M_{k(n)}(\C)\to \bS(L\Lambda_1)$ such that 
$\beta_n\circ \alpha_n\to \id_{L\Lambda_1}$ in the $L\Lambda_1$-topology. 
Moreover, we may assume $\alpha_n$ is normal \cite[Lemma 4.1]{DP22}.
It follows that we have
$$
\phi_n= : (M\ovt L\Lambda_2)\ovt \B(\ell^2\Lambda_1)\to (M\ovt L\Lambda_2)\ot_{\rm min} \bS(L\Lambda_1)\subset \bS_{\X_{M\ovt L \Lambda_2}}(M\ovt L\Lambda),
$$
where $\phi_n=(\id_{M\ovt L\Lambda_2}\ot \beta_n)\circ (\id_{M\ovt L\Lambda_2}\ot \alpha_n)$,
such that $\phi_n {\mid_{ M\ovt L\Lambda_2}}=\id$ and $\phi_n(x)\to x$ in the $M\ovt L\Lambda$-topology for any $x\in L\Lambda_1$,
	which implies that $\phi_n(x)\to x$ in $M\ovt L\Lambda$-topology for any $x\in (M\ovt L\Lambda_2)\ot_{\rm alg} L\Lambda_1$
	(N.B. we do not conclude convergence for all $x\in (M\ovt L\Lambda_2)\ovt L\Lambda_1$).
Here, we have $(M\ovt L\Lambda_2)\ot_{\rm min} \bS(L\Lambda_1)\subset \bS_{\X_{M\ovt L \Lambda_2}}(M\ovt L\Lambda)$
	as $(M\ovt L\Lambda_2)\ot \K(L\Lambda_1)\subset \K_{\X_{M\ovt L\Lambda_2}}(M\ovt L\Lambda)$. 

Using the embedding of $L\Gamma\subset pMp$ is given by $u_t\mapsto \sum_{g\in \Lambda} v_g p_g^t$,
	where $p_g^t\in L^\infty(X)$ is the characteristic function of $\{x\in X\mid c(t,x)=g\}$,
	we may produce a sequence of projections $\{p_n\}\in L^\infty(X)$ that increases to $p$ strongly
	such that for any $t\in \Gamma$, there exists some $N_t\in \N$ with $p_n u_t p_n\in B\rtimes_{\rm alg} \Lambda$ for all $n\geq N_t$.	
Indeed, for any $t\in \Gamma$ and $\varepsilon>0$, we may find a finite subset $F_{t,\varepsilon}\subset \Lambda$ 
	such that $\tau(p-\sum_{g\in F_{t,\varepsilon}} p_g^t)<\varepsilon$
	and we set $q_{t,\varepsilon}=\sum_{g\in F_{t,\varepsilon}} p_g^t$.
After enumerating elements in $\Gamma$ by $\{t_n\}_{n\in \N}$, we set $p_{n}=\wedge_{i\leq n} q_{t_i, 2^	{-n}}$.
Note that $\tau(p-p_n)\leq n2^{-n}$ and for any $i\leq n$, 
	we have $p_n u_{t_i} p_n=p_n(\sum_{g\in F_{t_i, 2^{-n}}}v_gp_g^t)p_n\in B\rtimes_{\rm alg}\Lambda$.

Set $\psi_n^0: pMp\to pMp$ to be $\psi_n^0(x)=p_n x p_n+\tau(x)(p-p_n)/\tau(p)$
	and $\psi_n=\psi_n^0\otimes\id_{B(\ell^2\Lambda_1)\ovt L\Lambda_2}$.
Then for any $t\in \Gamma\setminus\{e\}$ and $n\geq N_t$, we have
	$\psi_n(\Delta(u_t))=\sum_{g\in \Lambda} \Ad(p_n)( v_g p_g^t)\ot v_g=\sum_{g\in F} \Ad(p_n)( v_g  p_g^t)\ot v_g\in M\ot_{\rm alg} L\Lambda$,
	for some finite subset $F\subset \Lambda$ depending on $n$ and $t$.	
	Thus it follows that for any $t\in \Gamma$ and $m\geq N_t$,
	we have $\phi_n(\psi_m(\Delta(z u_t)))\to \psi_m(\Delta(z u_t))$ in the $M\ovt L\Lambda$-topology
	as $\psi_m(\Delta(z u_t))\in (M\ovt L\Lambda_2)\ot_{\rm alg} L\Lambda_1$. 
Next, notice that $\psi_m\to \id_{pMp\ovt L\Lambda}$ in the $pMp\ovt \Lambda$-topology 
	as $s^{\rho}_\omega(p_nx p_n-x)\leq \rho(p-p_n)+\omega(p-p_n)$ for any $x\in (pMp\ovt L\Lambda)_1$ 
	and normal states $\rho,\omega\in (pMp\ovt L\Lambda)_{*}$.
We also observe that $E: \bS_{\X_{M\ovt L\Lambda_2}}(M\ovt L\Lambda)\to P$ is continuous from the weak $pMp\ovt L\Lambda$-topology 
	to the weak $P$-topology as $E_{\mid pMp\ovt L\Lambda}$ is the conditional expectation to $P$.

Now consider the u.c.p.\ map $\theta_{n,m}:=E\circ \phi_n\circ \psi_m: pMp\ovt L\Lambda_2\ovt \B(\ell^2\Lambda_1)\to P$.
For any $t\in C_\Gamma({\Sigma})$, the above argument shows that 
	$\lim_m\lim_n E(\phi_n(\psi_m(\Delta(z u_t))))=\lim_m E(\psi_m(\Delta(z u_t))=E(\Delta(z u_t))=\Delta(z u_t)$
	in the ultraweak topology of $P$ (equivalently, the weak $P$-topology of $P$).
Moreover, for any $a\in A$, we have
	$E(\phi_n(\psi_m(a)))=E(\psi_m(a))\to E(a)$ ultraweakly in $P$. 
	
Therefore, if we denote by $\theta$ a limit point of $\{\theta_{n,m}\}$ in the point-weak$^*$ topology,
	then we obtain a state $\tau_P\circ \theta: pMp\ovt \B(\ell^2\Lambda_1)\ovt L\Lambda_2\to \C$
	that is $\{\Delta(zu_t)\mid t\in C_{\Gamma}(\Sigma)\}$-central and $(\tau\circ \theta)_{\mid zB\ot 1}$
	coincides with its trace.
Regarding $M\ovt \B(\ell^2\Lambda_1)\ovt L\Lambda_2=
	\langle B\rtimes(\Lambda \times \Lambda), e_{B\rtimes(\Lambda\times \Lambda_2)}\rangle$,
	where $\{e\}\times \Lambda$ acts trivially on $B$,
	we may apply Lemma~\ref{lem: central state} to conclude that 
	$\Delta(zL(C_\Gamma(\Sigma)))$ is amenable relative to $M\ovt L\Lambda_2$ in $M\ovt L\Lambda$,
	which implies that $zL(C_\Gamma(\Sigma))$ is amenable relative to $B\rtimes\Lambda_2$ in $M$
	by Lemma~\ref{lem: commultiplication and rel amen}.
\end{proof}

\begin{rem}\label{rem: technical}
Note that from the biexactness assumption, we derive a sequence of u.c.p.\ maps $\phi_n$	
	that only converges to identity on $(M\ovt L\Lambda_2)\ot_{\rm alg} L\Lambda_1$.
This is exactly the technical issue in \cite[Proposition 3.1]{Din24} and \cite[Proposition 7.3]{Iso20},
	which was overcame by the extra assumption of weak amenability.
However, the measure equivalence setting here allows us to construct approximation maps $\psi_n$ from above to circumvent this issue.
\end{rem}

\begin{cor}\label{cor: fundamental group}
    Let $\Gamma\actson (X,\mu)$ and $\Lambda\actson (Y,\nu)$ be p.m.p.\ free ergodic actions of countable groups
	and denote by $\cR=\cR(\Gamma\actson X)$, $\cS=\cR(\Lambda\actson Y)$.
Suppose $L(\cR)$ and $L(\cS)$ are full.

If $\Gamma$ is biexact, then $\cF(\cR\times \cS)=\cF(\cR)\cF(\cS)$.
\end{cor}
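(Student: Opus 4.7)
The plan is as follows. The inclusion $\cF(\cR)\cF(\cS)\subseteq \cF(\cR\times\cS)$ is immediate by combining amplifications: if $r_i\in \cF(\cR_i)$ then $(\cR\times\cS)^{r_1r_2}\cong \cR^{r_1}\times \cS^{r_2}\cong \cR\times\cS$. For the reverse, I fix $t\in \cF(\cR\times\cS)$, pick realizations $\cR=\cR(\Gamma\actson X)$ and $\cS=\cR(\Lambda\actson Y)$, and realize the isomorphism $\cR\times\cS\cong(\cR\times\cS)^t$ as a stable orbit equivalence between two free ergodic actions of $G:=\Gamma\times\Lambda$. This produces a measure equivalence self-coupling $\Omega$ of $G$ of index $t$, with commuting source and target copies $G_{\rm s}=\Gamma_{\rm s}\times\Lambda_{\rm s}$ and $G_{\rm t}=\Gamma_{\rm t}\times\Lambda_{\rm t}$ acting on $\Omega$.

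I would then apply Theorem~\ref{thm: rel solid in ME} to $\Omega$ with the biexact factor placed on the target side, i.e.\ $\Lambda_1:=\Gamma_{\rm t}$ and $\Lambda_2:=\Lambda_{\rm t}$, taking $\Sigma$ to be each of $\Gamma_{\rm s}$ and $\Lambda_{\rm s}$ in turn. Since $L(\cR)$ and $L(\cS)$ are factors, both $\Gamma$ and $\Lambda$ are ICC, so the centralizers in $G_{\rm s}$ are the complementary factors; this yields two dichotomies, each between an intertwining into $\Lambda_{\rm t}$ and a relative amenability alternative. Translating via Proposition~\ref{lem: ME intertwining} to the von Neumann algebra picture, fullness of $L(\cR)$ and $L(\cS)$ should upgrade each amenability alternative to the corresponding intertwining, since a full factor that is amenable relative to a subalgebra must in fact intertwine into it. Then Lemma~\ref{lem: commuting intertwine}(1) rules out that both $\Gamma_{\rm s}\preccurlyeq_\Omega\Lambda_{\rm t}$ and $\Lambda_{\rm s}\preccurlyeq_\Omega\Lambda_{\rm t}$ hold (else $G_{\rm s}\preccurlyeq_\Omega\Lambda_{\rm t}$, forcing $\Gamma$ to be finite), isolating the alternative $\Lambda_{\rm s}\preccurlyeq_\Omega\Lambda_{\rm t}$.

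Performing the parallel analysis on the inverse coupling $\Omega^{-1}$ yields the companion intertwining $\Lambda_{\rm t}\preccurlyeq_{\Omega^{-1}}\Lambda_{\rm s}$. Since $\Lambda_{\rm s}\lhd G_{\rm s}$ and $\Lambda_{\rm t}\lhd G_{\rm t}$, Lemma~\ref{lem: normal subgroup intertwine} then produces a sub-coupling $\Omega_\Lambda$ realizing $\Lambda\sim_{\rm ME}\Lambda$ of some index $t_2$ together with a sub-coupling $\Omega_\Gamma$ realizing $\Gamma\sim_{\rm ME}\Gamma$ of index $t_1$, with $t_1 t_2=t$. Because $\Omega$ is Cartan-preserving, so are $\Omega_\Gamma$ and $\Omega_\Lambda$, yielding $t_1\in \cF(\cR)$ and $t_2\in \cF(\cS)$ and hence $t\in \cF(\cR)\cF(\cS)$. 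The hard part will be justifying the passage from ``amenable relative to'' to genuine intertwining via the fullness hypotheses; this step plays the role that weak amenability plays in Isono's \cite[Theorem A]{Iso20}, and must be handled carefully here because the dichotomy of Theorem~\ref{thm: rel solid in ME} comes with the approximation procedure noted in Remark~\ref{rem: technical} rather than the stronger conclusion available under weak amenability.
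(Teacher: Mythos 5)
Your proposal has two genuine gaps, and the paper's proof is organized precisely so as to avoid both. First, the pivot of your argument --- ``a full factor that is amenable relative to a subalgebra must in fact intertwine into it'' --- is false as a general principle, and no such upgrade exists in the paper. For instance, if $G=\F_2\rtimes_\alpha\Z$ is a hyperbolic mapping-torus group, then $L(G)$ is full and is amenable relative to $L(\F_2)$ inside $L(G)=L(\F_2)\rtimes\Z$, yet $L(G)\not\prec_{L(G)} L(\F_2)$ (the canonical unitaries $u^n$ witness the failure of intertwining). What the paper does instead is set the coupling up \emph{asymmetrically}: it couples $\Gamma\times\Lambda$ with $\Gamma_1\times\Lambda_1$, where $\cK\times\cL:=\cR\times\cS^t$ is the \emph{shifted} decomposition of the same relation (rather than your symmetric self-coupling with $\Gamma_t\times\Lambda_t$ on the target), and applies Theorem~\ref{thm: rel solid in ME} in both directions to get the cross-wise dichotomy ``$\Lambda\preccurlyeq_\Omega\Lambda_1$ or $\Gamma_1\preccurlyeq_\Omega\Lambda$'': if both fail, one obtains $L\Gamma$ amenable relative to $A\rtimes\Lambda_1$ \emph{and} $A\rtimes\Lambda_1$ amenable relative to $A\rtimes\Lambda$, and transitivity of relative amenability \cite{OzPo10I} forces $\Gamma$ amenable, contradicting fullness of $L(\cR)$. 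So no single relative-amenability branch is ever converted to intertwining; only the two-sided chain is excluded. Relatedly, your case analysis is incomplete: ruling out that \emph{both} $\Gamma_{\rm s}$ and $\Lambda_{\rm s}$ intertwine into $\Lambda_{\rm t}$ does not isolate $\Lambda_{\rm s}\preccurlyeq_\Omega\Lambda_{\rm t}$; the cross possibility (the $\Gamma$-part landing in the $\Lambda$-side, which genuinely occurs, e.g.\ when $\cR\cong\cS$) is exactly the paper's second case $\Gamma_1\preccurlyeq_\Omega\Lambda$, handled separately and yielding $t\in\cF(\cS)$ directly. (A minor point: factoriality of $L(\cR)$ follows from ergodicity and does not make $\Gamma$ i.c.c., but this is harmless since relative amenability of the full centralizer restricts to the commuting factor.)

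The endgame is a second genuine gap. From Lemma~\ref{lem: normal subgroup intertwine} you only obtain measure equivalences \emph{of groups}, $\Gamma\sim_{\rm ME}\Gamma$ and $\Lambda\sim_{\rm ME}\Lambda$, with coupling indices multiplying to $t$; this says nothing about $\cF(\cR)$ or $\cF(\cS)$, which concern isomorphisms $\cS\cong\cS^{t_2}$ of the \emph{relations} (the self-coupling index set of $\F_2$ is all of $\Q_{>0}$, while relations from its actions can have trivial fundamental group). Your parenthetical ``because $\Omega$ is Cartan-preserving, so are $\Omega_\Gamma$ and $\Omega_\Lambda$'' is exactly what would need proof and does not follow from the cited lemmas. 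The paper avoids this by finishing at the von Neumann algebra level: in the case $\Lambda\preccurlyeq_\Omega\Lambda_1$, fullness of $L(\cS)$ enters through \cite[Proposition 6.3]{Hof16} to promote $1\ot L(\cS)\prec_M A\rtimes\Lambda_1=L^\infty(X_1)\ovt L(\cL)$ to $1\ot L(\cS)\prec_M 1\ot L(\cL)$ and then to a unitary conjugacy $u^*(1\ot L(\cS))u\subset 1\ot L(\cL)^s$; solidity of $L(\cR)$ (from biexactness of $\Gamma$ \cite{Oz06}) forces the relative commutant $P$ to be $\M_n(\C)$; and Popa's Cartan conjugacy results \cite{Po06A}, applied to the explicitly tracked Cartan subalgebras, convert the resulting tensor splittings into $\cR=\cK^{n/s}$ and $\cS^n=\cL^s=\cS^{ts}$, i.e.\ $n/s\in\cF(\cR)$ and $ts/n\in\cF(\cS)$. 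To salvage your scheme you would have to reproduce this von Neumann algebraic bookkeeping (or independently prove your Cartan-preservation claim), at which point you would essentially be rewriting the paper's proof.
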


\begin{proof}
Suppose $t\in \cF(\cR\times \cS)$ and we have $\cR\times \cS=(\cR\times \cS)^t=\cR\times \cS^t=: \cK\times \cL$,
	and we further denote $\cK=L^\infty(X_1)\rtimes\Gamma_1$, $\cL=L^\infty(Y_1)\rtimes\Lambda_1$
	and by $\Omega$ the ME coupling between $\Gamma\times \Lambda$ and $\Gamma_1\times \Lambda_1$.
Following \cite{Iso20}, we claim that one has $\Lambda\preccurlyeq_\Omega \Lambda_1$, 
	or $\Gamma_1\preccurlyeq_\Omega \Lambda$.
Indeed, if $\Lambda \not\preccurlyeq_\Omega \Lambda_1$ and $\Gamma_1\not\preccurlyeq_\Omega \Lambda$, 
	we then have $L\Gamma$ is amenable relative to $A\rtimes\Lambda_1$,
	as well as $A\rtimes\Lambda_1$ is amenable relative to $A\rtimes\Lambda$
	by Theorem~\ref{thm: rel solid in ME} and Lemma~\ref{lem: ME intertwining},
	where $A=L^\infty(X\times Y)=L^\infty(X_1\times Y_1)$.
By \cite{OzPo10I}, we have $L\Gamma$ is amenable relative to $A\rtimes\Lambda$ in $A\rtimes(\Gamma\times \Lambda)=:M$, which implies $\Gamma$ is amenable.

In the case of $\Lambda\preccurlyeq_{\Omega} \Lambda_1$, we have $1\ot L(\cS)\prec_M A\rtimes \Lambda_1$  by Lemma~\ref{lem: ME intertwining}.
Using the fact that $L(\cS)$ is full, we have $1\ot L(\cS)\prec_M 1\ot L(\cL)$ by \cite[Proposition 6.3]{Hof16}
	and hence $u^*(1\ot L(\cS)) u\subset 1\ot L(\cL)^s\subset L(\cK)^{1/s}\ovt L(\cL)^s$
	for some $s>0$ and $u\in \cU(M)$.
It follows that $L(\cL)^s=u^*(1\ot L(\cS))u\ovt P$ and $u^*(L(\cR)\ot 1)u=P\ovt L(\cK)^{1/s}$,
	where $P=(u^* (1\ot L(\cS))u)'\cap L(\cL)^s$.
Since $L(\cR)$ is solid \cite{Oz06}, one has $P=\M_n(\C)$ for some $n\in \N$.

Lastly, notice from above one has $u^*(1\ot L^\infty(Y))u\prec_{1\ot L(\cL)^s} L^\infty(Y_1)^s$ 
	and $L^\infty(X_1)^{1/s}\prec_{u^*(L(\cR)\ot 1)u}u^*( L^\infty(X)\ot 1)u$.
It then follows from \cite{Po06A} that $\cR=\cK^{n/s}=\cR^{n/s}$ and $\cS^n=	\cL^s=\cS^{ts}$,
	i.e, $n/s\in \cF(\cR)$ and $ts/n\in \cF(\cS)$, as desired.
	
If $\Gamma_1\preccurlyeq_\Omega \Lambda$, by a similar argument we have $v^*(L(\cK)\ot 1)v\subset 1\ot L(\cS)^k$
	for some $k>0$ and $v\in \cU(M)$
	and hence $1\ot L(\cS)^k=v^*(L(\cK)\ot 1)v\ovt Q$ and $v^* (1\ot L(\cL))v=L(\cR)^{1/k}\ovt Q$,
	where $Q=(v^* (L(\cK)\ot q)v)'\cap (1\ot L(\cS)^k)=\M_m(\C)$ for some $m\in \N$
	by fullness of $L(\cL)$.
The same argument as above then shows $\cS^k=\cK^m=\cR^m$ and $\cS^t=\cL=\cR^{m/k}$ and hence $t\in \cF(\cS)$.
\end{proof}

\subsection{Technical variants}
In this section, we prove technical variants of Theorem~\ref{thm: rel solid in ME} for later use.
We will assume the following notation throughout.

\begin{note}\label{notation}
Let $\Lambda$ be a countable i.c.c.\ group that is measure equivalent to a group $\Gamma$. 
By using \cite[Lemma 3.2]{Fu99A}, there exist positive integers $d\leq \ell $, 
	free ergodic p.m.p.\ actions $\Gamma\car (X,\mu)$ and $\Lambda\car (Y,\nu)$ such that 
$$
\mathcal R(\Lambda\actson Y)=\mathcal R(\Gamma\times \mathbb Z/d\mathbb Z \car X \times \mathbb Z/d\mathbb Z)\cap (Y\times Y),
$$
$$
\cR(\Gamma\times \Z/d\Z\actson X\times \Z/d\Z)= \cR(\Lambda\times \Z/\ell \Z\actson Y\times \Z/\ell \Z)\cap ((X\times \Z/d\Z)\times (X\times \Z/d\Z)).
$$

Here, we considered that $\mathbb Z/d\mathbb Z \car (\mathbb Z/d\mathbb Z,c)$ acts by addition and $c$ is the counting measure and similarly for $\ell$.

We also identified $X\times \mathbb Z/d\mathbb Z$ as a measurable subset of $Y\times \mathbb Z/\ell \mathbb Z$.
	$Y=Y\times \{0\}$ as a measurable subset of $X\times \Z/d\Z$. 
Denote $p=1_Y \in L^\infty(X\times \mathbb Z/d\mathbb Z)$ and $q=1_X\in L^\infty(Y\times \Z/\ell\Z)$.

Letting $B=L^\infty(Y)$, $A=L^\infty(X)\otimes \M_d(\mathbb C)$, $B_1=L^\infty(Y)\ot \M_\ell(\C)$ and $M=A\rtimes\Gamma$, 
	we have $pMp=B\rtimes\Lambda$ and $B\subset pAp$, as well as
	$M=q(B_1\rtimes\Lambda)q$ and $A\subset qB_1 q$.
Denote by $\{u_g\}_{g\in \Gamma}$ and $\{v_\lambda\}_{\lambda\in \Lambda}$ the canonical unitaries implementing the actions $\Gamma\car A$ and $\Lambda\car B$, respectively.

Following \cite{PoVa10a} we define the $*$-homomorphism $\Delta: B_1\rtimes\Lambda \to (B_1\rtimes\Lambda) \bar\otimes L(\Lambda)$ by $\Delta(bv_\lambda)=bv_\lambda\otimes v_\lambda$, for all $b\in B_1,\lambda\in\Lambda$. 
We may then restrict $\Delta$ to a $*$-homomorphism $\Delta:q(B_1\rtimes \Lambda)q=M\to M\bar\otimes L(\Lambda)$ and verify that  $\Delta (M)'\cap M\bar\otimes L(\Lambda)=\mathbb C 1$ since $\Lambda$ is i.c.c.
\end{note}


\begin{thm}\label{cor: rel solid in ME}
With Notation~\ref{notation}, suppose $\Gamma=\Gamma_1\times \Gamma_2$ with $\Gamma_1$ nonamenable and biexact.
Then for any $\Sigma<\Gamma$ and any nonzero $\Delta(\Sigma C_\Gamma(\Sigma))$-invariant projection $z\in L^\infty(X\times X)$
	we have either $z\Delta(L(\Sigma))\prec_{M\ovt M} M\ovt L^\infty(X)\rtimes\Gamma_2$
    or $z\Delta(L(C_\Gamma(\Sigma))$ is amenable relative to $M\ovt (L^\infty(X)\rtimes\Gamma_2)$ in $M\ovt M$.
\end{thm}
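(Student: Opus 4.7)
The plan is to adapt the argument behind Theorem~\ref{thm: rel solid in ME} to the current setting inside $M\ovt M$, by introducing a second comultiplication that exposes the group $\Gamma$ and thereby allows us to invoke biexactness of $\Gamma_1$. Specifically, I would define $\Delta_\Gamma: M\to M\ovt L(\Gamma)$ by $\Delta_\Gamma(au_t)=au_t\otimes u_t$ for $a\in A$ and $t\in \Gamma$, and then put $\Phi=\id_M\otimes \Delta_\Gamma: M\ovt M\to \cM:=M\ovt M\ovt L(\Gamma)$. Setting $\cN:=M\ovt M\ovt L(\Gamma_2)$, Lemma~\ref{lem: commultiplication and rel amen} applied to the $\Gamma$-action on the right factor of $M\ovt M$ reduces the conclusion to showing that $\Phi(z\Delta(L(C_\Gamma(\Sigma))))$ is amenable relative to $\cN$ in $\cM$, under the (to be established) hypothesis $\Phi(z\Delta(L(\Sigma)))\not\prec_{\cM} \cN$.

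Granting the non-intertwining transfer, Lemma~\ref{lem: central state from non intertwine} produces a conditional expectation $E:\bS_{\X_{\cN}}(\cM)\to P$ restricting to the canonical one on $\cM$, where $P$ contains $\Phi(z\Delta(L(C_\Gamma(\Sigma))))$ since $\Sigma$ commutes with $C_\Gamma(\Sigma)$. Exactly as in the proof of Theorem~\ref{thm: rel solid in ME}, biexactness of $\Gamma_1$ yields u.c.p.\ maps $\phi_n:(M\ovt M\ovt L(\Gamma_2))\ovt \B(\ell^2\Gamma_1)\to \bS_{\X_{\cN}}(\cM)$ restricting to the identity on $M\ovt M\ovt L(\Gamma_2)$ and converging in the $\cM$-topology on $(M\ovt M\ovt L(\Gamma_2))\otimes_{\alg} L(\Gamma_1)$. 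Next, I would construct projections $\{p_n\}\subset L^\infty(X)$ increasing to $1$ such that for each $t\in \Gamma$, $p_nu_tp_n$ is algebraic eventually; this yields truncation u.c.p.\ maps $\psi_m$ on $\cM$ with $\psi_m(\Phi(\Delta(zu_t)))\in (M\ovt M\ovt L(\Gamma_2))\otimes_{\alg}L(\Gamma_1)$ for $m$ large. Composing $\theta_{n,m}=E\circ \phi_n\circ \psi_m$ and taking a point-weak-$*$ limit $\theta$, for every $t\in C_\Gamma(\Sigma)$ we obtain $\theta(\Phi(\Delta(zu_t)))=\Phi(\Delta(zu_t))$ in the weak $P$-topology. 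This provides a state on $(M\ovt M\ovt L(\Gamma_2))\ovt \B(\ell^2\Gamma_1)$ that is $\Phi(z\Delta(L(C_\Gamma(\Sigma))))$-central and trace-preserving on the relevant corner, so combining Lemma~\ref{lem: central state} with Lemma~\ref{lem: commultiplication and rel amen} completes the argument.

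The main obstacle will be the non-intertwining transfer step: verifying that $z\Delta(L(\Sigma))\not\prec_{M\ovt M} M\ovt (L^\infty(X)\rtimes\Gamma_2)$ implies $\Phi(z\Delta(L(\Sigma)))\not\prec_{\cM}\cN$. This requires extending Popa--Vaes style comultiplication/intertwining arguments (akin to Lemma~\ref{lemma.trick}) to the setting where $\Delta$ and $\Phi$ act in tandem, while keeping track of the projection $z\in L^\infty(X\times X)$ alongside the $L^\infty(X)$-truncations used to build the $\psi_m$'s. Once this bookkeeping is in place, the remainder of the argument runs in direct parallel with Theorem~\ref{thm: rel solid in ME}, with the additional tensor factor $L(\Gamma)$ on the right serving as the arena in which biexactness of $\Gamma_1$ is deployed.
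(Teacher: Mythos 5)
Your proposal is essentially the paper's proof: your $\Phi=\id_M\ot\Delta_\Gamma$ is exactly the map $\Psi$ used there, and the chain of steps (non-intertwining transfer, the expectation from Lemma~\ref{lem: central state from non intertwine}, the biexactness maps $\phi_n$, the $L^\infty$-truncations, the point-weak$^*$ limit state, then Lemma~\ref{lem: central state} and Lemma~\ref{lem: commultiplication and rel amen}) matches the paper's argument step for step. Two remarks on the points you leave open: the ``main obstacle'' you flag is actually routine and the paper asserts it in one line, since $E_{M\ovt M\ovt L\Gamma_2}\circ \Psi=\Psi\circ E_{M\ovt (A\rtimes\Gamma_2)}$ on $M\ovt M$ and $\cM=M\ovt M\ovt L\Gamma$ is generated by $\Psi(M\ovt M)$ together with $1\ot 1\ot L\Gamma$, whose unitaries normalize $M\ovt M\ovt L\Gamma_2$ (so witnesses of non-intertwining push forward directly); the genuine bookkeeping, which your single family $\{p_n\}$ glosses over, is that $\Phi(\Delta(u_t))$ involves a double Fourier expansion --- first $u_t=\sum_g q_g^t v_g$ over $\Lambda$, then each $v_g=\sum_s p_s^g u_s$ back over $\Gamma$ --- so the paper needs two families of truncating projections $\{q_n\},\{p_m\}$ (truncating in the first and second tensor legs respectively), composed along a diagonal indexed by an enumeration of $\Gamma$, to land in $M\ot_{\rm alg}M\ot_{\rm alg}\C\Gamma$ where the $\phi_n$ converge.
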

\begin{proof}
Consider $\Psi^0: A\rtimes\Gamma\ni au_t\mapsto au_t\ot u_t\in  M\ovt L\Gamma$ and its extension
	$$\Psi:=\id_ M\ot \Psi^0: M\ovt (A\rtimes\Gamma)\to M\ovt (M\ovt L\Gamma).$$
Denote by $\cM=M\ovt M\ovt L\Gamma$ and $\X$ the $\cM$-boundary piece associated with $M\ovt M\ovt L\Gamma_2$.
Since $\Gamma_1$ is biexact, we have a sequence of u.c.p.\ maps
$$\phi_n: \cP:= M\ovt M\ovt  \B(\ell^2\Gamma_1)\ovt L\Gamma_2\to \bS_{\X}(\cM)$$
with ${\phi_n}_{\mid M\ovt M\ovt L\Gamma_2}=\id$ and $\phi_n(x)\to x$ in the $\cM$-topology for any $x\in L\Gamma_1$.

Set $N=z\Delta(L\Sigma)$ and $N_1=\Psi(z\Delta(L\Sigma))$.
Assuming $N\not\prec_{M\ovt M} M\ovt (A\rtimes\Gamma_2)$, we have $N_1\not\prec_{\cM} M\ovt M\ovt L\Gamma_2$ and hence
	there exists an $N_2$-central state $\varphi: \bS_{\X}(\cM)\to \C$
	such that $\varphi_{\mid \tilde z \cM\tilde z}=\tau$ by Lemma~\ref{lem: central state from non intertwine},
	where $N_2=N_1'\cap \tilde z \cM \tilde z$ and $\tilde z=\Psi(z)=z\ot 1_{L\Gamma}$.
	
We claim that we may find a sequence of u.c.p.\ maps
	$\theta_i: \cP\to \cP$ 
	such that for any $x\in \Psi\big(z\Delta(\C\Gamma)\big)$ we have
	$\theta_i(x)\in (M\ovt M)\ot_{\alg}L\Gamma$ for large enough $i$ 
	and $\theta_i(x)\to x$ in the point $\cM$-topology, similar to the argument in Theorem~\ref{thm: rel solid in ME}.
Proceeds as in Theorem~\ref{thm: rel solid in ME}, we then obtain a $\Psi(\Delta(\Sigma^c))$-invariant state $\psi$ on $\cP$
	that restricts to the trace on $z L^\infty(X\times X)$,
	where $\Sigma^c=C_\Gamma(\Sigma)$.
It then follows from Lemma~\ref{lem: central state} that $\Psi(z\Delta(L\Sigma^c))$ 
	is amenable relative to $M\ovt M\ovt L\Gamma_2$ in $\cM$, which in turns implies
	$z\Delta(L\Sigma^c)$ is amenable relative to $M\ovt (A\rtimes\Gamma_2)$ in $M\ovt M$
	by Lemma~\ref{lem: commultiplication and rel amen}.

To see the existence of such $\theta_i$, we first analyze $z\Delta(u_t)\in M\ovt M$ for $t\in \Gamma$.
Set $X_1=X\times \Z/d\Z$, $\tilde \Gamma=\Gamma\times \Z/d\Z$, $Y_1=Y\times \Z/\ell\Z$ and $\tilde \Lambda=\Lambda\times \Z/\ell\Z$.

For each $t\in \Gamma$, we have $u_t=\sum_{g\in \tilde \Lambda}q_g^t v_g$, where $q_g^t\in L^\infty(Y_1)$ forms a partition of $q=1_{X_1}\in L^\infty(Y_1)$.	
Similarly, we have $v_g =\sum_{t\in \tilde \Gamma} p_t^g u_t$ for each $g\in \Lambda$, 
	where $p_t^g\in L^\infty(Y)$ forms a partition of $p=1_Y\in L^\infty(X_1)$.
It follows that
$$
\Delta(u_t)=\sum_{(g,h)\in \tilde \Lambda} q_{(g,h)}^t v_{(g,h)}\ot v_g
$$
and 
$$
\Psi(\Delta(u_t))=\sum_{(g,h)\in \tilde \Lambda}\sum_{(s,r)\in \tilde \Gamma} q_{(g,h)}^tv_g\ot p_{(s,r)}^g u_{(s,r)}\ot u_s.
$$
The same argument as in Theorem~\ref{thm: rel solid in ME} produces two sequences of projections $\{q_n\}, \{p_m\}\subset L^\infty(X_1)$
	strongly increasing to $q$ and $p$, respectively.
And for each $t\in \Gamma$ (resp.\ $g\in \Lambda$), there exists $N_t\in \N$ (resp.\ $M_g\in \N$)
	such that $q_n\sum_{g\in \Lambda_1} q_g^t=q_n\sum_{g\in F}q_g^t$ (resp.\ $p_m\sum_{s\in \Gamma} p_s^g=p_m\sum_{s\in E}p_s^g$)
	for any $n\geq N_t$ (resp.\ $m\geq M_g$), where $F\subset \Lambda_1$ and $E\subset \Gamma_1$ are some finite sets.

Now we view $p_n\in L^\infty(X_1)\rtimes \tilde \Gamma= M$
	and consider $\alpha_n(q)=\Ad(q_n)(x)+\tau_{M}(x)q_n^\perp$ as a u.c.p.\ map on $M$.
Similarly, we consider $\beta_m^0(x)=\Ad(p_m)(x)+\tau_M(x)p_m^\perp$ for $x\in M$
	and set $\beta_m=\beta_m^0\ot \id_{\B(\ell^2\Gamma_1)\ovt L\Gamma_2}$.
Put $\theta_{n,m}:=\alpha_n\ot \beta_m \in UCP( \cP)$ and enumerate $\Gamma=\{t_i\}_{i\in \N}$.

For each $i\in \N$, set $n_i:=\max_{j\leq i}\{N_{t_j}\}$, $F_i\subset \Lambda_1$ to be a finite such that
	$q_{n_i}\sum_{g\in \Lambda_1} q_g^{t_j}=q_{n_i}\sum_{g\in F_i}q_g^{t_j}$ for all $j\leq i$,
	and $m_i:=\max_{g\in \pi(F_i) }\{M_{g}\}$, where $\pi:\Lambda_1\to \Lambda$ is the quotient.
It follows that 
$$\theta_i(\Psi(\Delta(u_{t_j})))=\sum_{(g,h)\in F_i} \sum_{{(s,r)}\in E} q_{n_i}( q_{(g,h)}^t v_{(g,h)}) q_{n_i}\ot p_{m_i}(p_{(s,r)}^g u_{(s,r)})p_{m_i}\ot u_s \in M\ot_{\rm alg} M\otimes_{\rm alg} L\Gamma$$ 

for all $j\leq i$,
	where $\theta_i:=\theta_{(n_i, m_i)}$ and $E\subset \Gamma_1$ is some finite subset.
Finally, since $z\in L^\infty(X\times X)$ is in the multiplicative domain of $\theta_i\circ \Psi$, we have 
	$$
	\theta_i(\Psi(z\Delta(u_{t_j})))= (z\ot 1_{L\Gamma}) \theta_i(\Psi(\Delta(u_{t_j}))) \in (M\ovt M)\otimes_{\rm alg} L\Gamma,
	$$
	as desired.
\end{proof}

The following is another variation of Theorem~\ref{thm: rel solid in ME}. Although it is more involved, 
	the proof follows the same ideas
	as in  Theorem~\ref{thm: rel solid in ME} and Theorem~\ref{cor: rel solid in ME}.

	


\begin{thm}\label{lem: flip-rel solid}
With Notation~\ref{notation}, we assume in addition that $\Lambda=\Lambda_1\times \Lambda_2$.
Set $\cM=M\ovt L\Lambda=M\ovt (L\Lambda_1\ovt L\Lambda_2)$, $\tilde \cM=M\ovt M$ and $\sigma\in \Aut(\cM\ovt \cM)$ given by
	$\sigma((x\ot v_g\ot v_h)\ot (y\ot v_{g'}\ot v_{h'}))=(x\ot v_{g'}\ot v_h)\ot (y\ot v_g \ot v_{h'})$,
	where $x,y\in M$, $g,g'\in \Lambda_1$ and $h, h'\in \Lambda_2$.
	
Suppose that $\Gamma=\Gamma_1\times\Gamma_2$ with $\Gamma_1$ nonamenable and biexact,
	$\Sigma<\Gamma$ is a subgroup and $\Sigma^c$ denotes the centralizer of $\Sigma$ in $\Gamma$.

Then we have either $\sigma\big(\Delta(L(\Sigma) )\ovt \Delta({L(\Gamma)})\big)z\prec_{\cM\ovt \tilde \cM}  \cM\ovt M\ovt (A\rtimes\Gamma_2)$
		or $\sigma\big((\Delta(L(\Sigma^c))\ot 1)\big)z$ is amenable relative to $ \cM\ovt M\ovt (A\rtimes\Gamma_2)$ in $ \cM\ovt \tilde \cM$,
	for any nonzero projection $z\in (A\ot 1_{L(\Lambda)})\ovt (A\ovt A)\subset \cM\ovt \tilde \cM$ that is $\sigma\big(\Delta(\Sigma\Sigma^c)\ot \Delta(\Gamma)\big)$-invariant.

Similarly, we have 
	either $\sigma\big(\Delta({L(\Gamma)})\ot \Delta(L(\Sigma) )\big)z\prec_{\cM\ovt \tilde \cM}  \cM\ovt (M\ovt A\rtimes\Gamma_2)$
		or $\sigma\big(1\ot \Delta(L(\Sigma^c))\big)z$ is amenable relative to $ \cM\ovt (M\ovt A\rtimes\Gamma_2)$ in $ \cM\ovt \tilde \cM$,
	for any nonzero projection $z\in (A\ot 1_{L(\Lambda)})\ovt (A\ovt A)\subset \cM\ovt \tilde \cM$ that is $\sigma\big(\Delta(\Sigma\Sigma^c)\ot \Delta(\Gamma)\big)$-invariant.
\end{thm}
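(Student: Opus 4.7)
The plan is to mirror the proof of Theorem~\ref{cor: rel solid in ME}, using a further comultiplication on the last (or third) tensor factor to create room for biexactness of $\Gamma_1$, combined with the projection trick adapted to the flipped elements $\sigma(\Delta(u_t)\otimes \Delta(u_s))$. Only the first assertion will be addressed; the second follows by performing the same argument on the third tensor factor instead of the fourth.

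First I would introduce $\Psi := \id_{\cM \ovt M}\otimes \Psi^0 : \cM \ovt \tilde\cM \to \cM \ovt M \ovt M \ovt L\Gamma =: \mathfrak M$, where $\Psi^0 : A\rtimes\Gamma \ni au_t \mapsto au_t \otimes u_t \in M \ovt L\Gamma$ is the $\Gamma$-comultiplication. Then $\tilde z := \Psi(z)$ lies in $(A\otimes 1)\ovt(A\ovt A\otimes 1) \subset \mathfrak M$, so $z$ is in the multiplicative domain of $\Psi$. Denoting by $\X$ the $\mathfrak M$-boundary piece associated with $\cM \ovt M \ovt M \ovt L\Gamma_2$, the assumption that the first alternative fails combined with Lemma~\ref{lem: central state from non intertwine} produces a conditional expectation $E : \bS_\X(\mathfrak M) \to \widetilde N$, where $\widetilde N$ is the relative commutant of $\Psi\big(\sigma(\Delta(L(\Sigma))\ovt\Delta(L(\Gamma)))z\big)$ inside a suitable corner of $\mathfrak M$, with $E$ restricting to the canonical conditional expectation on $\mathfrak M$. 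Thanks to the commutativity of $\Sigma$ and $\Sigma^c$ in $\Gamma$ together with the $\sigma(\Delta(\Sigma\Sigma^c)\otimes \Delta(\Gamma))$-invariance of $z$, one verifies that $\Psi\big(\sigma(\Delta(L(\Sigma^c))\otimes 1)\tilde z\big)\subset \widetilde N$.

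Next, biexactness of $\Gamma_1$ (combined with \cite[Lemma 4.1]{DP22} as in Theorem~\ref{thm: rel solid in ME}) provides u.c.p.\ maps $\phi_n : (\cM\ovt M\ovt M\ovt L\Gamma_2) \ovt \B(\ell^2\Gamma_1) \to \bS_\X(\mathfrak M)$ that are the identity on $\cM\ovt M\ovt M\ovt L\Gamma_2$ and converge to the identity on $L\Gamma_1$ in the $\mathfrak M$-topology. To circumvent the fact that this convergence only holds on algebraic tensors, I would construct projections $\{q_n\}, \{p_m\}\subset L^\infty(X_1)$ strongly increasing to $q$ and $p$ respectively, and the associated u.c.p.\ compression-plus-trace maps $\alpha_n, \beta_m$ on $M$, exactly as in Theorem~\ref{cor: rel solid in ME}. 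Tensoring with identities yields $\theta_i := \id_\cM\otimes(\alpha_{n_i}\otimes \beta_{m_i}\otimes \id_{L\Gamma_2\ovt \B(\ell^2\Gamma_1)})$. After enumerating $\Gamma=\{t_k\}$ and choosing $n_i, m_i$ large enough, the usual Fubini-type argument (using that the coefficients $q_g^t, p_s^g$ of $\Delta(u_t)$ and $\Psi^0(v_g)$ become finitely supported after compression by $q_n, p_m$) shows that for all $k,k'\leq i$, $\theta_i\big(\Psi(\sigma(\Delta(u_{t_k})\ovt\Delta(u_{t_{k'}}))\tilde z)\big)$ lies in the algebraic tensor $(\cM \ovt M\ovt M)\otimes_{\rm alg} L\Gamma$; here crucially $\sigma$ only permutes $\Lambda_1$-generators and leaves the $M$-parts untouched, so it does not interfere with the compression.

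Finally, taking a limit point $\theta$ of $\{E\circ \phi_n\circ \theta_i\}$ in the point-weak$^*$ topology and composing with the trace of $\widetilde N$ yields a state on $(\cM\ovt M\ovt M\ovt L\Gamma_2)\ovt \B(\ell^2\Gamma_1)$ that is central for $\Psi\big(\sigma(\Delta(L(\Sigma^c))\otimes 1)\tilde z\big)$ and restricts to the trace on $\tilde z \cdot (A\otimes 1)\ovt(A\ovt A\otimes 1)$. Realizing $M\ovt M\ovt L\Gamma_2\ovt \B(\ell^2\Gamma_1)$ as the basic construction $\langle \cM_1, e_{\cM_2}\rangle$ with $\cM_1=(\cM\ovt M\ovt M)\rtimes(\Gamma\times\Gamma)$ (or a similar semidirect decomposition) and applying Lemma~\ref{lem: central state} gives relative amenability of $\Psi\big(\sigma(\Delta(L(\Sigma^c))\otimes 1)\tilde z\big)$ relative to $\cM\ovt M\ovt M\ovt L\Gamma_2$ inside $\mathfrak M$; Lemma~\ref{lem: commultiplication and rel amen} then pushes this down to the desired conclusion in $\cM\ovt\tilde\cM$. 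The main obstacle I anticipate is organizing the bookkeeping so that the projection trick correctly produces algebraic tensors on elements of the form $\sigma(\Delta(u_t)\ovt\Delta(u_s))\tilde z$ — in particular, ensuring that the compression by $p_m$ applied to the rightmost $M$-factor survives the fact that $\Psi^0(\Delta(u_t))$ expands into a sum over pairs $(g,s)\in\Lambda\times\Gamma$, so that both the $q_n$-truncation (in $g$) and the $p_m$-truncation (in $s$) need to be chosen simultaneously large enough for all $t_k$ with $k\le i$.
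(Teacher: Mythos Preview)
Your overall strategy matches the paper's proof almost exactly, but there is a concrete gap in the construction of $\theta_i$. You set $\theta_i := \id_\cM\otimes(\alpha_{n_i}\otimes \beta_{m_i}\otimes \id)$, i.e.\ you leave the first factor $\cM=M\ovt L\Lambda$ untouched. Now expand $\sigma(\Delta(u_t)\otimes 1)$: writing $u_t=\sum_{g\in\Lambda}p_g^t v_g$ and $v_{(g_1,e)}=\sum_{s\in\Gamma}q_s^{(g_1,e)}u_s$, one has
\[
\Psi\big(\sigma(\Delta(u_t)\otimes 1)\big)=\sum_{g\in\Lambda}\sum_{s\in\Gamma}(p_g^t v_g\otimes v_e\otimes v_{g_2})\otimes\big(1_M\otimes q_s^{(g_1,e)}u_s\otimes u_s\big).
\]
The coefficient $p_g^t$ that controls the $g$-sum lives in the \emph{first} $M$-factor (inside $\cM$), while your $\alpha_{n_i}$ acts on the \emph{third} factor, which here is just $1_M$. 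Consequently your compression does nothing to truncate $g$; since the set of $s$ appearing in position~5 depends on $g_1$, infinitely many $u_s$ survive and $\theta_i\big(\Psi(\sigma(\Delta(u_t)\otimes 1)\tilde z)\big)$ does \emph{not} land in $(\cM\ovt M\ovt M)\otimes_{\rm alg}L\Gamma$. The same problem persists for your more general elements $\sigma(\Delta(u_{t_k})\ovt\Delta(u_{t_{k'}}))$: compressing position~3 truncates $h$, but $g$ remains free.

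The fix, as in the paper, is to place the first compression on the first $M$-factor (so $\alpha_n$ acts on $\cM_1=\tilde\cM\ovt M$ via $\Ad(p_n\otimes 1\otimes 1)$), truncating $g$ to a finite set $F_i$, and only \emph{then} choose $m_i$ large enough that $\beta_{m_i}$ truncates the $s$-sum for every $(g_1,e)$ with $g\in F_i$. Note also that you only need convergence on elements of the form $\sigma(\Delta(u_t)\otimes 1)z$ with $t\in\Sigma^c$ (these are the ones whose centrality you require); handling the full $\sigma(\Delta(u_t)\ovt\Delta(u_{t'}))$ is unnecessary and, with only two compressions, cannot work anyway.
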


\begin{proof}
Consider $\Theta_0: A\rtimes\Gamma\ni au_t\mapsto au_t\ot u_t\in  M\ovt L\Gamma$ and its extension
	$$\Theta:=\id_{\tilde \cM\ovt M}\ot \Theta_0: (\tilde \cM\ovt M)\ovt (A\rtimes\Gamma)\to (\tilde \cM\ovt M)\ovt (M\ovt L\Gamma).$$
Denote by $\cM_1= \tilde\cM\ovt M$, $\cM_2=\cM_1\ovt M\ovt L\Gamma$ and $\X$ the $\cM_2$-boundary piece associated with the subalgebra $\cM_1\ovt M\ovt L\Gamma_2$.
	Since $\Gamma_1$ is biexact, we have a sequence of u.c.p.\ maps
$$\phi_n: \cP:=\cM_1\ovt M\ovt L\Gamma_2\ovt \B(\ell^2\Gamma_1)\to \bS_{\X}(\cM_2)$$
with ${\phi_n}_{\mid \cM_1\ovt M\ovt L\Gamma_2}=\id$ and $\phi_n(x)\to x$ in the $\cM_2$-topology for any $x\in L\Gamma_1$.
In particular, we have $\phi_n(x)\to x$ in the $\cM_2$-topology for any $x\in (\cM_1\ovt M\ovt L\Gamma_2)\ot_{\rm alg} L\Gamma_1$.

Set $N=\sigma\big((\Delta(L\Sigma)\ovt \Delta(L\Gamma)\big)z\subset z( \cM \ovt \tilde \cM)z$ and $N_1=\Theta(N)\subset z\cM_2 z$, where we view $z=z\ot 1_{L\Gamma}\in \cM_2$.
By assumption we have $N\not\prec_{ \cM\ovt \tilde \cM}\cM_1\ovt (A\rtimes\Gamma_2)$, then we have $N_1\not\prec_{\cM_2}\cM_1\ovt M\ovt L\Gamma_2$. 
It follows that there exists a $N_2$-central state $\varphi: \bS_\X(\cM_2)\to \C$ with $\varphi_{\mid z \cM_2 z}=\tau$,
	where $N_2=N_1'\cap z \cM_2 z$.
	
We claim that we may find a sequence of u.c.p.\ maps
	$\theta_i: \cP\to \cP$ 
	such that for any $x\in \Theta\big(\sigma\big( \Delta(\C\Gamma)\otimes 1)  \big)z\big)$ we have
	$\theta_m(x)\in (\cM_1\ovt M)\ot_{\alg}\C\Gamma$ for large enough $m$ 
	and $\theta_i\to \id_{\cM_2}$ in the point $\cM_2$-topology.
Similar to the proof of Theorem~\ref{thm: rel solid in ME},
	this then yields a $\Theta(\sigma\big((\Delta(\Sigma^c)\ot 1)\big)z \big)$-central state $\psi$ on $\cP$
	that restrict to the trace on $z(\cM \ovt \tilde \cM)z$.
We may view $\cM_2=(A\ovt 1\ovt A\ovt A)\rtimes((\Gamma\times \Lambda\times \Gamma\times \Gamma)\times \Gamma)$, 
	where $\{e\}\times \Gamma$ acts trivially,
	and 
$$
\cP=\langle \cM_2, e_{\cM_1\ovt M\ovt L\Gamma_1}\rangle=\langle \mathcal A\rtimes (\Upsilon\times \Gamma) , e_{\mathcal A\rtimes(\Upsilon\times \Gamma_1) }\rangle,
$$
where $\mathcal A=A\ovt 1\ovt A\ovt A$ and $\Upsilon=\Gamma\times \Lambda\times \Gamma\times \Gamma$.
Thus we are in the situation of Lemma~\ref{lem: central state} and hence ${\Theta\big(\sigma\big(\Delta(L\Sigma^c)\ot 1\big)z \big)}$ 
	is amenable relative to $\cM_1\ovt M\ovt L\Gamma_2$ in $\cP$, which in turns implies
	$\sigma(\Delta(L\Sigma^c)\ot 1)z$ is amenable relative to $M_1\ovt (A\rtimes\Gamma_2)$ in $\tilde \cM\ovt \tilde \cM$.
	
To see the existence of such $\theta_i$, we first analyze $\sigma(\Delta(u_t)\ot 1))\in \cM\ovt \cM$ for $t\in \Gamma$.
For each $t\in \Gamma$, we have $u_t=\sum_{g\in \Lambda}p_g^t v_g$, where $p_g^t\in B_1$ forms a partition of $q=1_X$.
Similarly, we have $v_g=\sum_{t\in \Gamma} q_t^g u_t$ for each $g\in \Lambda$, where $\{q_t^g\}_{t\in \Gamma}\subset A$
	forms a partition of $p=1_Y$.
Writing $g=(g_1,g_2)\in \Lambda_1\times \Lambda_2$, we have 
$$\sigma(\Delta(u_t)\ot 1)=\sum_{g\in \Lambda} (p_g^t v_{g} \otimes v_{e}\otimes v_{g_2})\ot (1_M\ot v_{g_1} \ot v_e),$$
and hence 
$$\Theta(\sigma(\Delta(u_t)\ot 1))=\sum_{g\in \Lambda}\sum_{s\in \Gamma}(p_g^t v_{g} \otimes v_{e}\otimes v_{g_2})\ot (1_M\ot q_s^{(g_1,e)}u_s\ot u_s).$$
The same argument as in Theorem~\ref{thm: rel solid in ME} produces two sequences of projections $\{p_n\}\subset B$ and $\{q_m\}\subset A$
	such that they increase to $1$ strongly and 
	for each $t\in \Gamma$ (resp.\ $g\in \Lambda$), there exists $N_t\in \N$ (resp.\ $M_g\in \N$)
	such that $p_n\sum_{g\in \Lambda} p_g^t=p_n\sum_{g\in F}p_g^t$ (resp.\ $q_m\sum_{s\in \Gamma} q_s^g=q_m\sum_{s\in E}q_s^g$)
	for any $n\geq N_t$ (resp.\ $m\geq M_g$), where $F\subset \Lambda$ and $E\subset \Gamma$ are some finite sets.

Now we view $p_n=p_n\ot 1_{L\Lambda \ovt M}= M\ovt L\Lambda\ot 1_M \subset  \cM\ovt M=\cM_1$
	and consider $\alpha_n(x)=\Ad(p_n)(x)+\tau_{\cM_1}(x)p_n^\perp$ as a u.c.p.\ map on $\cM_1$.
Similarly, we consider $\beta_m^0(x)=\Ad(q_m)(x)+\tau_M(x)q_m^\perp$ for $x\in M$
	and set $\beta_m=\beta_m^0\ot \id_{L\Gamma_2\ovt \B(\ell^2\Gamma_1)}$.
Put $\theta_{n,m}:=\alpha_n\ot \beta_m \in UCP( \cP)$ and enumerate $\Gamma=\{t_i\}_{i\in \N}$.
For each $i\in \N$, set $n_i:=\max_{j\leq i}\{N_{t_i}\}$, $F_i\subset \Lambda$ to be a finite such that
	$p_{n_i}\sum_{g\in \Lambda} p_g^{t_j}=p_{n_i}\sum_{g\in F_i}p_g^{t_j}$ for all $i\leq j$,
	and $m_i:=\max_{g\in F_i }\{M_{(g_1,e)}\}$, where $g=(g_1, g_2)$.
It follows that 
$$
\theta_i(\Theta(\sigma(\Delta(u_{t_j})\ot1)))=\sum_{g\in F_i} \sum_{s\in E}( p_{n_i}( p_g^t v_g) p_n\ot v_e\ot v_{g_2}) \ot (1_M \ot q_{m_i}q_s^{(g_1,e)} u_s q_{m_i} \ot u_s)\in \cM_1\ovt M\otimes_{\rm alg} \C\Gamma
$$ 
for all $j\leq i$,
	where $\theta_i:=\theta_{(n_i, m_i)}$ and $E\subset \Gamma$ is some finite subset.
Lastly, note that $\theta_i$ is $\mathcal A$-bimodular and $\Theta(z)=z\ot 1_{L\Gamma}$ and hence
$$
\theta_i \Big(\Theta\big(\sigma \big(\Delta(u_{t_j})\ot 1\big)z \big)\Big)
	=\theta_i\Big(\Theta \big(\sigma \big(\Delta(u_{t_j}\ot 1\big)\big)\Big) (z\ot1_{L\Gamma})\in \cM_1\ovt M\ot_{\rm alg} \C\Gamma,
$$
	as desired. 
\end{proof}

\section{Proofs of Theorem \ref{theorem.upf.biexact} and Theorem \ref{theorem.product.rigidity}}\label{sec: proof upf}


Throughout this section we assume Notation~\ref{notation} that will be used in the proofs of Theorem \ref{theorem.upf.biexact} and Theorem \ref{theorem.product.rigidity}.

%

\begin{lem}\label{lemma.have.fi}
Assume that $\Gamma=\times_{i=1}^n\Gamma_i$ and $\Gamma_1,\dots,\Gamma_n$ are nonamenable biexact.
Let $\Sigma_1,\Sigma_2$ be commuting subgroups of $\Lambda$ satisfying $[\Lambda: \Sigma_1\Sigma_2]<\infty$. 

Then there exist a non-zero projection $e\in B^{\Sigma_1\Sigma_2}$ and disjoint subsets $T_1,T_2\subset \{1,\dots,n\}$ such that $L(\Sigma_i)e$ is amenable relative to $A\rtimes\Gamma_{T_i}$ inside $M$ for any $i\in\{1,2\}$.   

Moreover, note that for any non-zero projection $r\in B$ one can choose $e\in B^{\Sigma_1\Sigma_2}$ with $re\neq 0$.
\end{lem}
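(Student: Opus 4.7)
The plan is to apply Theorem~\ref{thm: rel solid in ME} to each product decomposition $\Gamma=\Gamma_j\times\Gamma_{\hat j}$ (so that the nonamenable biexact factor $\Gamma_j$ plays the role of $\Lambda_1$ in that theorem, with the roles of $\Gamma$ and $\Lambda$ interchanged) and then combine the resulting dichotomies through a minimality selection. Since $\Sigma_{3-i}\subset C_\Lambda(\Sigma_i)$ (because $\Sigma_1,\Sigma_2$ commute), the moreover part of Theorem~\ref{thm: rel solid in ME} applied with $\Sigma=\Sigma_i<\Lambda$ will yield, for every nonzero $\Sigma_1\Sigma_2$-invariant projection $z\in B$ and every $j\in\{1,\dots,n\}$,
\[
zL(\Sigma_i)\prec_M A\rtimes\Gamma_{\hat j}\quad\text{or}\quad zL(\Sigma_{3-i})\text{ is amenable relative to }A\rtimes\Gamma_{\hat j}\text{ in }M.
\]

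I would then choose a pair $(e,T_2)$ with $e\in B^{\Sigma_1\Sigma_2}$ a nonzero projection satisfying $re\neq 0$, and $T_2\subseteq\{1,\dots,n\}$ of minimal cardinality, such that $L(\Sigma_2)e$ is amenable relative to $A\rtimes\Gamma_{T_2}$ in $M$; the pair $(p,\{1,\dots,n\})$ shows the family is nonempty. For each $j\in T_2$, I would apply the dichotomy to $\Sigma=\Sigma_2$ with $z=e$. If the first alternative $eL(\Sigma_2)\prec_M A\rtimes\Gamma_{\hat j}$ were to hold, I would combine it with the relative amenability of $L(\Sigma_2)e$ rel $A\rtimes\Gamma_{T_2}$ via a standard intertwining-plus-relative-amenability argument to produce a nonzero $\Sigma_1\Sigma_2$-invariant $e'\leq e$ with $re'\neq 0$ and $L(\Sigma_2)e'$ amenable relative to $A\rtimes\Gamma_{T_2\cap\hat j}=A\rtimes\Gamma_{T_2\setminus\{j\}}$, contradicting the minimality of $|T_2|$. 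Hence the second alternative must hold for every $j\in T_2$, i.e., $eL(\Sigma_1)$ is amenable relative to $A\rtimes\Gamma_{\hat j}$. A standard intersection property for relative amenability over product-group crossed products (the $eL(\Sigma_1)$-equivariant means on each coset space $\Gamma/\Gamma_{\hat j}\cong\Gamma_j$ tensor together into an equivariant mean on $\Gamma/\Gamma_{T_2^c}$) then yields $eL(\Sigma_1)$ amenable relative to $\bigcap_{j\in T_2}A\rtimes\Gamma_{\hat j}=A\rtimes\Gamma_{T_2^c}$. Setting $T_1:=T_2^c$ produces the required disjoint subsets.

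The hard part will be executing the intertwining-plus-relative-amenability step so that the subprojection $e'$ can be taken in $B^{\Sigma_1\Sigma_2}$ (rather than merely in $(eL(\Sigma_2))'\cap eMe$) while maintaining $re'\neq 0$: the standard approach is to upgrade the intertwining to $\prec^s$ on a corner, transport the relative amenability along the resulting partial isometry, and descend to a $\Sigma_1\Sigma_2$-invariant projection in $B$ via central supports, using the measure-theoretic tools of Subsection~\ref{sec: ME} to ensure that $r$ meets the final projection nontrivially. The moreover clause is then automatic, since $re\neq 0$ is built into the minimality selection.
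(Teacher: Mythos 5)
Your overall architecture --- the dichotomy from Theorem~\ref{thm: rel solid in ME} applied with $\Gamma_j$ in the biexact position, a minimality selection on $T_2$, and the intersection property \cite[Proposition 2.7]{PoVa14I} to pass from amenability relative to each $A\rtimes\Gamma_{\hat j}$, $j\in T_2$, down to $A\rtimes\Gamma_{T_2^c}$ --- is the same toolkit the paper uses, and your treatment of the second alternative and the extraction of disjoint $T_1,T_2$ is fine. The genuine gap is exactly the step you flag as ``the hard part,'' and the mechanism you sketch does not close it. From $eL(\Sigma_2)\prec_M A\rtimes\Gamma_{\hat j}$, the standard upgrade (\cite[Lemma 2.4]{DHI16}) produces a nonzero projection $e_1\in B^{\Sigma_1\Sigma_2}$, $e_1\leq e$, with $L(\Sigma_2)e_1\prec^s_M A\rtimes\Gamma_{\hat j}$, and hence relative amenability by \cite[Lemma 2.6(3)]{DHI16}; but neither this construction, nor transporting relative amenability along the intertwining partial isometry and descending via central supports, gives any control ensuring $re_1\neq 0$. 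Since your minimality is taken over pairs $(e,T_2)$ subject to the constraint $re\neq 0$, the new pair $(e_1,T_2\setminus\{j\})$ may violate that constraint, and then no contradiction with minimality results. The subsection~\ref{sec: ME} tools you invoke do not repair this: the part of $e$ meeting $r$ can genuinely be lost in the $\prec^s$ upgrade when $e$ is not minimal in $B^{\Sigma_1\Sigma_2}$.

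The missing idea --- and the way the paper makes this step trivial --- is that the hypothesis $[\Lambda:\Sigma_1\Sigma_2]<\infty$ forces $B^{\Sigma_1\Sigma_2}$ to be completely atomic. One therefore fixes $e$ once and for all to be an \emph{atom} of $B^{\Sigma_1\Sigma_2}$ with $re\neq 0$ (possible by atomicity), and chooses \emph{both} $T_1$ and $T_2$ minimal for this fixed $e$ via \cite[Proposition 2.7]{PoVa14I}. Since $\Lambda$ is i.c.c.\ one has $\mathcal N_{pMp}(L(\Sigma_i))'\cap pMp\subset B^{\Sigma_1\Sigma_2}$, so the identity $B^{\Sigma_1\Sigma_2}e=\C e$ automatically upgrades any intertwining $L(\Sigma_i)e\prec_M A\rtimes\Gamma_{\hat j}$ to $L(\Sigma_i)e\prec^s_M A\rtimes\Gamma_{\hat j}$ on the \emph{same} projection $e$; relative amenability on the same $e$ follows from \cite[Lemma 2.6(3)]{DHI16}, and intersecting with $A\rtimes\Gamma_{T_i}$ contradicts minimality of $T_i$ without ever replacing $e$, so the constraint $re\neq 0$ is never at risk. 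With this observation inserted, your argument closes (if $j\in T_1\cap T_2$ both alternatives of the dichotomy are now contradictory, yielding disjointness directly), and the moreover clause is immediate from the initial choice of the atom, as you say.
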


\begin{proof}
We only prove the moreover part.
Since $B^{\Sigma_1\Sigma_2}$ is completely atomic, let $e\in B^{\Sigma_1\Sigma_2}$ be a projection such that $B^{\Sigma_1\Sigma_2}e=\mathbb Ce$ and $re \neq 0$.
By \cite[Proposition 2.7]{PoVa14I} for any $i\in\{1,2\}$ there exists a minimal subset $T_i\subset\{1,\dots, n\}$ such that $L(\Sigma_i)e$ is amenable relative to $A\rtimes\Gamma_{T_i}$. Assume by contradiction that there is $j\in T_1\cap T_2$.
By  Theorem \ref{thm: rel solid in ME}, we get that $L(\Sigma_1)e\prec_M A\rtimes\Gamma_{\hat j}$ or $L(\Sigma_2)e$ is amenable relative to $A\rtimes\Gamma_{\hat j}$. By \cite[Proposition 2.7]{PoVa14I} the second possibility contradicts the minimality of $T_2$. Assume now that $L(\Sigma_1)e\prec_M A\rtimes\Gamma_{\hat j}$ holds.
Since $\Lambda$ is i.c.c., we get that $\mathcal N_{pMp}(L(\Sigma_1))'\cap pMp\subset B^{\Sigma_1\Sigma_2}$. Since $B^{\Sigma_1\Sigma_2}e=\mathbb Ce$ we further deduce that $L(\Sigma_1)e\prec_M^s A\rtimes\Gamma_{\hat j}$. This would imply by \cite[Lemma 2.6(3)]{DHI16} that $L(\Sigma_1)e$ is amenable relative to $A\rtimes\Gamma_{\hat j}$, which contradicts the minimality of $T_1$.
\end{proof}

\subsection{Proof of Theorem \ref{theorem.upf.biexact}}



We first define a relative version of the flip automorphism from \cite{IM22} (see also \cite{Dri23}).
Define $\mathcal M=M\bar\otimes L(\Lambda)$  and let $\sigma\in {\rm Aut}(\mathcal M\bar\otimes\mathcal M)$ be defined by $\sigma((m\otimes v_{h_1}\otimes v_{h_2})\otimes(m'\otimes v_{h'_1}\otimes v_{h'_2})) = (m\otimes v_{h'_1}\otimes v_{h_2})\otimes (m'\otimes v_{h_1}\otimes v_{h'_2})$ for all $m,m'\in M$, $h_1,h_1'\in\Lambda_1$ and $h_2,h_2'\in\Lambda_2$.
Define $\Delta:M\to \mathcal M$ by letting $\Delta(a v_h)=a v_h \otimes v_h$, for all $a\in A $ and $h\in\Lambda$.
Note that there are $2n$ commuting subalgebras in $\sigma(\Delta(L(\Gamma))\otimes \Delta(L(\Gamma))) \subset \mathcal M \bar\otimes \mathcal M$ denoted $R_1,\dots , R_{2n}$. More precisely, $R_i = \sigma(\Delta(L(\Gamma_i))\otimes 1)$ if $1\leq i\leq n$ and $R_i = \sigma(1\otimes\Delta(L(\Gamma_i)))$ if $n+1\leq i\leq 2n$. Denote $\tilde p = 1\otimes p$, $\tilde {\mathcal M} = M\bar\otimes M$ and note that $\mathcal M \subset \tilde p \tilde {\mathcal M}\tilde p$.  
Since $\Lambda$ is i.c.c., note that $\mathcal N_{ {\mathcal M}\bar\otimes \tilde p \tilde {\mathcal M} \tilde p}(R_i)'\cap  {\mathcal M} \bar\otimes \tilde p \tilde {\mathcal M} \tilde p\subset \sigma(\Delta(M)\bar\otimes \Delta(M))'\cap  {\mathcal M} \bar\otimes \tilde p \tilde {\mathcal M} \tilde p =:\mathcal B \subset A\bar\otimes 1 \bar\otimes A\bar\otimes B$, for any $1\leq i\leq 2n$.

By applying Theorem~\ref{lem: flip-rel solid}, we get that for any non-zero projection $z\in \mathcal B$ and for all  $1\leq k\leq 2n$ and $1\leq j \leq n$, we have $R_{\hat k}z\prec_{\mathcal M\bar\otimes \tilde{\mathcal M}} \mathcal M \bar\otimes M\bar\otimes ( A\rtimes\Gamma_{\widehat j})$ or $R_{k}z$ is amenable relative to $\mathcal M \bar\otimes M\bar\otimes  ( A\rtimes\Gamma_{\widehat j})$ inside $\mathcal M\bar\otimes\tilde{\mathcal M}$. Note that it is not possible to find  $1\leq k\leq n$ such that $R_{k}z$ and $R_{k+n}z$ are both amenable relative to $\mathcal M \bar\otimes M\bar\otimes  ( A\rtimes\Gamma_{\widehat j})$ for all $1\leq j \leq n$. Indeed, if there would exist such $k$, then \cite[Proposition 2.7]{PoVa14I} implies that $R_{k}z$ and $R_{k+n}z$ are both amenable relative to $\mathcal M \bar\otimes M \otimes  1$.
By letting $z_0 \in R_{\hat k}'\cap \mathcal M\bar\otimes \mathcal M$ be the support projection of $E_{\mathcal M\bar\otimes\mathcal M}(z)$, it follows by \cite[Lemma 2.4]{Dri23}  that $R_{k}z_0$ and $R_{k+n}z_0$  are both amenable relative to $\mathcal M \otimes 1$ inside $\mathcal M\bar\otimes\mathcal M$. Since $E_{\mathcal M\bar\otimes\mathcal M}(z)\in A\otimes 1\otimes A\otimes 1$, it follows that $\sigma(z_0)=z_0$. By considering the flip automorphism $\sigma$, we derive that $(\Delta(L(\Gamma_k))\otimes 1)z_0$ is amenable relative to $M\bar\otimes L(\Lambda_2) \bar\otimes M\bar\otimes L(\Lambda_1)$, but also amenable relative to $M\bar\otimes L(\Lambda_1) \bar\otimes M\bar\otimes L(\Lambda_2)$ inside $\mathcal M\bar\otimes\mathcal M$. By using once again \cite[Proposition 2.7]{PoVa14I}, we get that $(\Delta(L(\Gamma_k))\otimes 1)z_0$ is amenable relative to $M\bar\otimes 1\bar\otimes M\bar\otimes 1$,
	which implies by \cite[Lemma 2.4]{Dri23} and \cite[Lemma 10.2]{IoPoVa13} that $\Gamma_k$ is amenable, contradiction.
Hence, by \cite[Lemma 2.4]{DHI16} it follows that any non-zero projection $z\in \mathcal B$ and for any  $1\leq k\leq n$, there exist a non-zero projection $z_1\in \mathcal B$ with $z\ge z_1$ and $1\leq j \leq n$ satisfying $R_{\hat k}z_1\prec^s_{\mathcal M\bar\otimes \tilde{\mathcal M}} \mathcal M \bar\otimes M\bar\otimes ( A\rtimes\Gamma_{\widehat j})$ or $R_{\widehat {k+n}}z_1\prec^s_{\mathcal M\bar\otimes \tilde{\mathcal M}} \mathcal M \bar\otimes M\bar\otimes ( A\rtimes\Gamma_{\widehat j})$.  

Using this observation finitely many times, one can construct a function $f: \{1,\dots,n\}\to \{1,\dots,n\}$ and a decreasing sequence of non-zero projections $z_1\ge z_2\ge\dots \ge z_n$ from $\mathcal B$ with the property that $R_{\hat k}z_k\prec^s_{\mathcal M\bar\otimes \tilde{\mathcal M}} \mathcal M \bar\otimes M\bar\otimes ( A\rtimes\Gamma_{\widehat{f(k)}})$ or $R_{\widehat {k+n}}z_k\prec^s_{\mathcal M\bar\otimes \tilde{\mathcal M}} \mathcal M \bar\otimes M\bar\otimes ( A\rtimes\Gamma_{\widehat{f(k)}})$ for any $1\leq k \leq n$. Put $s=z_n$. Next, note that $f$ is bijective. Otherwise, by \cite[Proposition 4.4]{CDAD23B} there is $1\leq \ell \leq n$ such that  $\sigma (\Delta(L(\Gamma))\bar\otimes\Delta(L(\Gamma)))\prec_{\mathcal M\bar\otimes \tilde{\mathcal M}} \mathcal{M} \bar\otimes M\bar\otimes(A\rtimes\Gamma_{\widehat \ell}).$ Since $\sigma_{| (A\otimes 1)\bar\otimes (A\otimes 1) }= {\rm id}_{|(A\otimes 1)\bar\otimes (A\otimes 1)},$ \cite[Lemma 2.3]{BeVa14} implies that $\sigma (\Delta(M)\bar\otimes\Delta(M))\prec_{\mathcal M\bar\otimes \tilde{\mathcal M}} \mathcal{M} \bar\otimes M\bar\otimes(A\rtimes\Gamma_{\widehat \ell}).$ Using the definition of $\sigma$, this further implies that $\Delta(M)\prec_{M\bar\otimes M}M\bar\otimes (A\rtimes\Gamma_{\widehat \ell})$. By \cite[Lemma 10.2]{IoPoVa13} we deduce that $L(\Lambda)\prec_M A\rtimes\Gamma_{\widehat \ell}$, which contradicts the fact that $\Gamma_\ell$ is an infinite group.  This shows that $f$ is bijective.

Next, one can define a partition $\{1,\dots,n\}= T_1\sqcup T_2$ by
\begin{equation*}
 \sigma(\Delta(L(\Gamma))\bar\otimes \Delta(L(\Gamma_{\widehat k})))s\prec^s_{\mathcal M \bar\otimes\tilde{\mathcal M}} \mathcal M\bar\otimes M\bar\otimes(A\rtimes\Gamma_{\widehat {f(k)}}), \text{ for all }k\in T_2, 
\end{equation*}
and
\begin{equation*}
 \sigma(\Delta(L(\Gamma_{\widehat{k}}))\bar\otimes \Delta(L(\Gamma)))s\prec^s_{\mathcal M \bar\otimes\tilde{\mathcal M}} \mathcal M\bar\otimes M\bar\otimes(A\rtimes\Gamma_{\widehat {f(k)}}), \text{ for all }k\in T_1. 
\end{equation*}

By \cite[Lemma 2.8(2)]{DHI16} this further implies that
\begin{equation*}
    \sigma(\Delta(L(\Gamma_{{T_2}}))\bar\otimes \Delta(L(\Gamma_{{T_1}}))) \prec_{\mathcal M \bar\otimes\tilde{\mathcal M}} \mathcal M\bar\otimes M\bar\otimes A
\end{equation*}
Since $\sigma(\Delta(L(\Gamma_{{T_2}}))\bar\otimes \Delta(L(\Gamma_{{T_1}})))\subset \mathcal M\bar\otimes\mathcal M$, we further deduce that
\begin{equation*}
    \sigma(\Delta(L(\Gamma_{{T_2}}))\bar\otimes \Delta(L(\Gamma_{{T_1}}))) \prec_{\mathcal M \bar\otimes{\mathcal M}} \mathcal M\bar\otimes M\bar\otimes 1.
\end{equation*}
By applying the flip autmorphism $\sigma$ to the above intertwining, it follows that $\Delta(L(\Gamma_{{T_i}}))\prec_{\mathcal M} M\bar\otimes L(\Lambda_i)$ for any $1\leq i\leq 2$. 
 This implies that $L(\Gamma_{T_i})\prec_M B\rtimes \Lambda_i$ for any $1\leq i \leq 2$. By Lemma~\ref{lem: flip intertwining ME} and Lemma \ref{lem: ME intertwining} we get the conclusion of the proof.
\hfill$\square$

\begin{rem}
There is an alternative way to end the proof of Theorem \ref{theorem.upf.biexact} by avoiding the use of Lemma \ref{lem: flip intertwining ME}: Let $k\in T_2$. Then by Theorem \ref{thm: rel solid in ME}, we get that $L(\Lambda_1)\prec_M A\rtimes\Gamma_{\widehat k}$ or $L(\Lambda_2)$ is amenable relative to $A\rtimes\Gamma_{\widehat k}$. If the second possibility holds, then by \cite[Lemma 2.11]{Be14}, we get that
$B\rtimes\Lambda_2$ is amenable relative to $A\rtimes\Gamma_{\widehat k}$. Together with $A\rtimes\Gamma_{T_2}$ being amenable relative to $B\rtimes\Lambda_2$, we get by \cite{OzPo10I} that $\Gamma_k$ is amenable, contradiction. 
Hence, $L(\Lambda_1)\prec_M A\rtimes\Gamma_{\widehat k}$ for any $k\in T_2$. This implies that $L(\Lambda_1)\prec_M A\rtimes\Gamma_{T_1},$ and consequently, $B\rtimes\Lambda_1\prec_M A\rtimes\Gamma_{T_1}$. The proof now concludes by using either Lemma \ref{lem: normal subgroup intertwine} or by \cite[Proposition 3.1]{DHI16}.
    
\end{rem}

\subsection{Proof of Theorem~\ref{theorem.product.rigidity}}	
For any $1\leq i\leq n$ since $\Gamma_i$ is biexact,
 Theorem~\ref{cor: rel solid in ME} implies that $\Delta(L(\Gamma_n))$ is amenable relative to $M\bar\otimes (A\rtimes \Gamma_{\hat i})$ or $\Delta(L(\Gamma_{\hat n}))\prec_{M\bar\otimes M} M\bar\otimes (A\rtimes \Gamma_{\hat i})$. Note that if $\Delta(L(\Gamma_n))$ is amenable relative to $M\bar\otimes (A\rtimes \Gamma_{\hat i})$ for any $i$, it follows from \cite[Proposition 2.7]{PoVa14I} that $\Delta(L(\Gamma_n))$ is amenable relative to $M\bar\otimes 1$. This implies by \cite[Lemma 10.2]{IoPoVa13} that $\Gamma_n$ is amenable, contradiction. Therefore, there is $j$ such that $\Delta(L(\Gamma_{\hat n}))\prec_{M\bar\otimes M} M\bar\otimes (A\rtimes \Gamma_{\hat j})$. Since $\Gamma_{\widehat n}$ has property (T), it follows that  $\Delta(L(\Gamma_{\hat n}))\prec_{M\bar\otimes M} M\bar\otimes (A_{\hat j}\rtimes \Gamma_{\hat j})$.
 By using Ioana's ultrapower technique \cite[Theorem 3.1]{Ioa11b} (see also \cite[Theorem 3.3]{CdSS15}, \cite[Lemma 5.6]{KV15} and \cite[Theorem 4.1]{DHI16}), it follows that there is a subgroup $\Sigma<\Lambda$ with non-amenable centralizer $C_\Lambda(\Sigma)$ such that $L(\Gamma_{\widehat n})\prec_M B\rtimes\Sigma$. By applying \cite[Lemma 2.3]{BeVa14}, we deduce that
 $A\rtimes\Gamma_{\widehat n}\prec_M B\rtimes\Sigma$.  By using \cite[Lemma 2.4(2)]{DHI16} there is a non-zero projection $r\in \mathcal N_{pMp}(B\rtimes\Sigma)'\cap pMp \subset B^{\Sigma C_\Lambda(\Sigma)}$ such that 
\begin{equation}\label{i1}
A\rtimes\Gamma_{\widehat n}\prec_M (B\rtimes\Sigma)s \text{ for any non-zero projection } s\in B^{\Sigma }r.
\end{equation}
Next, we claim that $(B\rtimes\Sigma)r \prec_M^s A\rtimes\Gamma_{\hat n}$.
Let $s$ be a non-zero projection in $B^{\Sigma C_\Lambda(\Sigma)}$.
 By Theorem \ref{thm: rel solid in ME} and \cite[Lemma 2.3]{BeVa14}, it follows that for any $i\in\{1,\dots,n-1\}$ we have $(A\rtimes \Sigma)s\prec_M A\rtimes\Gamma_{\widehat i}$ or $L(C_\Lambda(\Sigma))s$ is amenable relative to $A\rtimes\Gamma_{\widehat i}$. The first possibility together with \eqref{i1} and \cite[Lemma 2.4]{Dr19} imply that $A\rtimes\Gamma_{\widehat n}\prec_M A\rtimes\Gamma_{\widehat i}$, which implies that $\Gamma_i$ is not infinite, contradiction. Hence, $L(C_\Lambda(\Sigma))s$ is amenable relative to $A\rtimes\Gamma_{\widehat i}$ for any $i\in\{1,\dots,n-1\}$, which implies by \cite[Proposition 2.7]{PoVa14I} that $L(C_\Lambda(\Sigma))s$ is amenable relative to $A\rtimes\Gamma_{n}$. Since $C_\Lambda(\Sigma)$ is nonamenable, we deduce that $L(C_\Lambda(\Sigma))s$ is not amenable relative to $A\rtimes\Gamma_{\widehat n}$. By using once again By Theorem \ref{thm: rel solid in ME} and \cite[Lemma 2.3]{BeVa14}, we get that $(A\rtimes \Sigma)s\prec_M A\rtimes\Gamma_{\widehat n}$, proving the claim.

By \cite[Proposition 3.1]{DHI16}, $\Sigma$ is measure equivalent to $\Gamma_{\hat n}$, and hence by \cite[Corollary 1.4]{Fu99A}, it follows that $\Sigma$ has property (T). It thus implies that $L(\Sigma)e\prec_M^s M_{\hat n}$. Note that $\Delta:=\{\lambda\in \Lambda \; |\; \mathcal O_\Sigma(\lambda) \text{ is finite}\}$ is normalized by $\Sigma$, where $\mathcal O_\Sigma(\lambda)=\{ \eta \lambda \eta^{-1} \; | \; \eta\in\Sigma \}$. Let $f\in B^{\Delta\Sigma}e$ be a non-zero projection. By passing twice to relative commutants we note that $M_n\prec (B\rtimes\Delta\Sigma)f$, and thus, $B^{\Delta\Sigma}f\prec_M M_{\hat n}$. Also, by passing to relative commutants in \eqref{i1}, we get that $B^{\Delta\Sigma}f\prec_M M_n$. Since $\mathcal N_{pMp}(B^{\Delta\Sigma})'\cap pMp\subset B^{\Delta\Sigma}$ it follows from \cite[Lemma 2.4]{DHI16} that  $B^{\Delta\Sigma}e\prec_M^s M_{\hat n}$ and $B^{\Delta\Sigma}e\prec_M^s M_{n}$. By \cite[Lemma 2.8(2)]{DHI16} it follows that $B^{\Delta\Sigma}e\prec_M^s \mathbb C 1$, which implies that there is a non-zero projection $f_0\in B^{\Delta\Sigma}e$ such that $B^{\Delta\Sigma}f_0 = \mathbb C f_0$. Since $M_n\prec_{M} (B\rtimes\Delta\Sigma)f_0$ and $M_{\hat n}\prec_{M} (B\rtimes\Delta\Sigma)f_0$, it follows from \cite[Lemma 2.6]{Dr19} that $M\prec_{M} B\rtimes \Delta\Sigma$, and hence, $[\Lambda: \Delta\Sigma]<\infty$. 

Next, since $\Lambda$ has property (T), the group $\Delta\Sigma$ has property (T) as well. Note that $\lambda\in \Delta$ if and only if there is a finite index subgroup $\Omega<\Sigma$ with $\lambda\in C_\Lambda(\Omega)$. It thus follows that there is a decreasing sequence $\{\Omega_n\}_{n\ge 1}$ of finite index subgroups of $\Sigma$ for which $\Delta=\cup_{n\ge 1} C_\Lambda(\Omega_n)$. Since $\Delta\Sigma$ has property (T) it therefore follows that there is $n\ge 1$ for which $[\Lambda: \Omega_n C_\Lambda(\Omega_n)]<\infty.$ Denote $\Sigma_1=\Omega_n$ and $\Sigma_2=C_\Lambda(\Omega_n)$.
Since $\Lambda$ is i.c.c., it follows that $\Sigma_1,\Sigma_2$ have property (T). By using Lemma \ref{lemma.have.fi} there exist a non-zero projection $e\in B^{\Sigma_1\Sigma_2}$ and a partition $\{1,\dots,n\}=T_1\sqcup T_2$ such that $L(\Sigma_i)e$ is amenable relative to $A\rtimes\Gamma_{T_i}$ inside $M$ for any $i\in\{1,2\}$. Therefore, by \cite[Lemma 2.6(1)]{DHI16} we get that $L(\Sigma_i)e\prec_M^s A_{T_i}\rtimes\Gamma_{T_i}$ for any $i\in\{1,2\}$. 
Thus, we can apply \cite[Theorem 3.1]{Dr19} and obtain
that there exist commuting subgroups $\Lambda_1,\Lambda_2<\Lambda$ and free ergodic pmp actions $\Lambda_i\car Y_i$ for $i\in\{1,2\}$ such that $\Lambda\car Y$ is induced from $\Lambda_1\times\Lambda_2\car Y_1\times Y_2$ and $\Gamma_{T_i}\car X_{T_i}$  is stably orbit equivalent to $\Lambda_i\car Y_i$. The proof ends by repeating the argument finitely many times.

\begin{rem}
In the above proof, one can show that $T_1$ can be taken to be $\hat n$ and $T_2 = \{n\}$. Indeed, since $A\rtimes\Gamma_{\widehat n}\prec_M (B\rtimes\Sigma)r$ and $[\Sigma:\Sigma_1]$, we get $A\rtimes\Gamma_{\widehat n}\prec_M (B\rtimes\Sigma_1)r$. 
By \cite[Lemma 2.4(4)]{DHI16} there is a non-zero projection $r_1\in B^{\Sigma_1\Sigma_2}r$ for which 
\begin{equation}\label{i2}
A\rtimes\Gamma_{\widehat n}\prec_M (B\rtimes\Sigma_1)s  \text{ for any non-zero projection  } s\in B^{\Sigma_1\Sigma_2 }r_1.    
\end{equation}
By using the moreover part of Lemma \ref{lemma.have.fi} one may assume that $e\in B^{\Sigma_1\Sigma_2}$ satisfies $er_1\neq 0$.
From \eqref{i2} we get $A\rtimes\Gamma_{\widehat n}\prec_M (B\rtimes\Sigma_1)er_1$ and by using \cite[Lemma 3.4]{Va07a}, we deduce $A\rtimes\Gamma_{\widehat n}\prec_M (B\rtimes\Sigma_1)e$. 
By \cite[Lemma 2.3]{BeVa14} we have  $(B\rtimes\Sigma_1)e\prec_M^s A\rtimes\Gamma_{T_1}$, and hence, by using \cite[Lemma 3.7]{Va07a}, we derive $A\rtimes\Gamma_{\widehat n}\prec_M A\rtimes\Gamma_{T_1}$, which shows that $T_1 = \hat n$ and $T_2 = \{n\}$.
\end{rem}

\section{Rigidities for infinite direct sum groups}\label{sec: infinite direct sum}

\begin{proof}[Proof of Theorem~\ref{thm: infinite ME}]
Let $\{\Gamma_i\}_{i\in I}$ and $\{\Lambda_j\}_{j\in J}$ be families of nonamenable biexact groups
	and suppose $\Gamma:=\oplus_{i\in I}\Gamma_i\actson (X,\mu)$ and $\Lambda:=\oplus_{j\in J} \Lambda_j\actson (Y,\nu)$ 
	are stable orbit equivalent free ergodic p.m.p.\ actions.
It follows that we may find $d,\ell\in \N$ such that $Y=Y\times \{0\}\subset X\times \Z/d\Z\subset Y\times \Z/\ell\Z$
	and $$\cR(\Gamma\times \Z/d\Z\actson X\times \Z/d\Z)\cap (Y\times Y)=\cR(\Lambda\actson Y),$$
	$$\cR(\Lambda\times \Z/\ell\Z\actson Y\times Z/\ell \Z)\cap ((X\times \Z/d\Z)\times (X\times \Z/d\Z))
		=\cR(\Gamma\times \Z/d\Z\actson X\times \Z/d\Z).$$

Put $A=L^\infty(X)$, $A_1=L^\infty(X)\ot \M_d(\C)$, $B=L^\infty(Y)$, $B_1=L^\infty(Y)\ot \M_\ell(\C)$,
	$p=1_{A_1}\in B_1$, $q=1_B\in A_1$ and $M=B_1\rtimes\Lambda$.
By Proposition~\ref{prop: infinite rel biexact} and \cite[Proposition 8.3]{DP22}, 
	we have $M$ is biexact relative to $\{B_1\rtimes\Lambda_{\widehat j} \}_{j\in J}$ and hence for each $i\in I$,
	there exists $\sigma(i)\in J$ such that $L\Gamma_{\widehat i}\prec_M B_1\rtimes\Lambda_{\widehat{\sigma(i)}}$.
By Lemma~\ref{lem: ME intertwining}, we have $\Gamma_{\widehat i}\preccurlyeq_\Omega \Lambda_{\widehat {\sigma (i)}}\times \Z/\ell\Z$
	and thus $\Gamma_{\widehat i}\preccurlyeq_{\Omega} \Lambda_{\widehat{\sigma(i)}}$,
	where $\Omega=(X\times \Z/d\Z)\times (\Lambda\times \Z/\ell\Z)=(Y\times \Z/\ell \Z)\times (\Gamma\times \Z/d\Z)$
	is the ME coupling arising from the stable orbit equivalent actions \cite[Theorem 3.3]{Fu99A}.
Note that $\sigma: I\to J$ is injective as otherwise Lemma~\ref{lem: commuting intertwine} shows that $\Lambda_j$ is finite,
	contradicting the nonamenable assumption.

Reversing the roles of $\Gamma_i$'s and $\Lambda_j$'s, we obtain an injection $\rho: J\to I$ such that $L\Lambda_{\widehat j}\prec_{pMp} A_1\rtimes{\Gamma_{\widehat{\rho(j)}}}$.
The same argument as above yields that $\Lambda_{\widehat j}\preccurlyeq_{\Omega_0} \Gamma_{\widehat {\rho(j)}}$,
	where $\Omega_0=(X\times \Z/d\Z)\times \Lambda= Y\times (\Gamma\times \Z/d\Z)$,
	and this in turn implies that $\Lambda_{\widehat j}\preccurlyeq_{\Omega}\Gamma_{\widehat{\rho(j)}}$.

We claim that $\rho\circ \sigma=\id_I$. Otherwise, there would exist some $i\in I$ such that
	$\Gamma_i\preccurlyeq_\Omega \Lambda_{\widehat j}$ and $\Lambda_{\widehat j}\prec_\Omega \Gamma_{\widehat i}$,
	which implies that $\Gamma_i$ would be finite by Lemma~\ref{lem: commuting intertwine}.
Similarly, we have $\sigma\circ \rho=\id_J$.
It then follows from Lemma~\ref{lem: normal subgroup intertwine} that $\Gamma_i\sim_{\rm ME} \Lambda_{\sigma(i)}$ for
	all $i\in I$.
\end{proof}

\begin{rem}
Observe that the conclusion of Theorem~\ref{thm: infinite ME} holds whenever one has $L\Gamma_{\widehat i}\prec_M B_1\rtimes\Lambda_{\widehat{\sigma(i)}}$ 
	and $L\Lambda_{\widehat j}\prec_{pMp} A_1\rtimes{\Gamma_{\widehat{\rho(j)}}}$ as above.
	Thus the same conclusion holds if each $\Gamma_i$ and $\Lambda_j$ are groups with positive first $\ell^2$-Betti number,
	and more generally, in class ${\mathscr M}$ as in \cite{Dri23}.
\end{rem}

\begin{proof}[Proof of Corollary \ref{cor.ME.wreathproduct}]
Let $A\wr\Gamma\car (X,\mu)$ and $B\wr\Lambda\car (Y,\nu)$ be free ergodic pmp actions for which we have the identifications $p(L^\infty(X)\rtimes A\wr\Gamma)p =  L^\infty(Y)\rtimes B\wr\Lambda$ and $L^\infty(X)p=L^\infty(Y)$ for a non-zero projection $p\in L^\infty(X)$. Denote $M=L^\infty(X)\rtimes A\wr\Gamma$.

Note that it is enough to show $L^\infty(X)\rtimes A^{(\Gamma)}\prec_M L^\infty(Y)\rtimes B^{(\Lambda)}$ and $L^\infty(Y)\rtimes B^{(\Lambda)}\prec_M L^\infty(X)\rtimes A^{(\Gamma)}$
by Lemma~\ref{lem: ME intertwining}, Lemma~\ref{lem: normal subgroup intertwine} and Theorem~\ref{thm: infinite ME}. Because of symmetry reasons, we only prove $L^\infty(X)\rtimes A^{(\Gamma)}\prec_M L^\infty(Y)\rtimes B^{(\Lambda)}$. Using \cite[Lemma 2.3]{BeVa14} it is enough to show $L( A^{(\Gamma)})\prec_M L^\infty(Y)\rtimes B^{(\Lambda)}$.

We make the following notation. For any $i\in\Gamma$, denote by $A^i<A^{(\Gamma)}$ the canonical embedding of $A$ on position $i$ and by $\hat i$ the set $\Gamma\setminus\{i\}$. For a subset $I\subset\Gamma$, we denote $A^I=\oplus_{i\in I}A^i$.
By \cite[Theorem 1.4]{PoVa14II} (see also \cite[Lemma 5.2]{KV15}) for any $i\in\Gamma$ we have that (a) $L(A^i)$ is amenable relative to $L^\infty(Y)\rtimes B^{(\Lambda)}$ inside $M$ or (b) $L(A^{\hat i})\prec_M L^\infty(Y)\rtimes B^{(\Lambda)}.$

If there exists $i\in\Gamma$ such that $(b)$ holds, then by \cite[Lemma 2.3]{BeVa14} and \cite[Lemma 2.4]{DHI16} it follows that $(L^\infty(X)\rtimes A^{\hat i})z\prec^s_M L^\infty(Y)\rtimes B^{(\Lambda)}$, where $z\in \mathcal N_{M}(L^\infty(X)\rtimes A^{\hat i}))'\cap M\subset L^\infty(X)^{A^{(\Gamma)}}$ is  a non-zero projection. Since the projections $(u_gzu_g^*)_{g\in\Gamma}$ cannot be mutually disjoint, it follows that there is $g\neq e$ for which $z_0 := z u_gzu_g^*$ is a non-zero projection of $L^\infty(X)^{A^{(\Gamma)}}$. Thus, we get that $(L^\infty(X)\rtimes A^{\hat i})z_0\prec^s_M L^\infty(Y)\rtimes B^{(\Lambda)}$ and $(L^\infty(X)\rtimes A^{\widehat{gi}})z_0\prec^s_M L^\infty(Y)\rtimes B^{(\Lambda)}$. 
By using \cite[Proposition 4.4]{CDAD23B}, it follows that $L(A^{(\Gamma)})\prec_M L^\infty(Y)\rtimes B^{(\Lambda)}$, proving the claim.

Assume now that (a) holds for any $i\in \Gamma$. By \cite[Lemma 2.6]{DHI16} it follows that for any non-zero projection $z\in L^\infty(X)^{A^{(\Gamma)}}$ we have $L(A^i)z$ is amenable relative to $L^\infty(Y)\rtimes B^{(\Lambda)}$ inside $M$. By \cite[Theorem 1.4]{PoVa14II} it follows that either (i) $L(A^{(\Gamma)})z$ is amenable relative to $L^\infty(Y)\rtimes B^{(\Lambda)}$ inside $M$ or (ii) $L(A^i)z \prec_M L^\infty(Y)\rtimes B^{(\Lambda)}$. 

First, suppose that there exists a non-zero projection $z\in L^\infty(X)^{A^{(\Gamma)}}$ for which $L(A^{(\Gamma)})z$ is amenable relative to $L^\infty(Y)\rtimes B^{(\Lambda)}$ inside $M$. By \cite[Lemma 2.11]{Be14} there is a non-zero projection $z_1\in (L^\infty(X)\rtimes A^{(\Gamma)})'\cap M$ such that $L^\infty(X)\rtimes A^{(\Gamma)}z_1$ is amenable relative to $L^\infty(Y)\rtimes B^{(\Lambda)}$ inside $M$. Since $\mathcal N_{M}(L^\infty(X)\rtimes A^{(\Gamma)})'\cap M = \mathbb C1$, it follows from \cite[Lemma 2.6(2)]{DHI16}  that $L^\infty(X)\rtimes A^{(\Gamma)}$ is amenable relative to $L^\infty(Y)\rtimes B^{(\Lambda)}$ inside $M$. By applying \cite[Theorem 1.4]{PoVa14II} once again it follows that either $L^\infty(X)\rtimes A^{(\Gamma)} \prec_ML^\infty(Y)\rtimes B^{(\Lambda)}$, which proves the claim as before, or $M$ is amenable relative to $L^\infty(Y)\rtimes B^{(\Lambda)}$ inside $M$. The last conclusion implies that $\Lambda$ is amenable, contradiction.

Thus, we can assume that (ii) always holds. More precisely, it means that for any $i\in\Gamma$, we have $L(A^i)z \prec_M L^\infty(Y)\rtimes B^{(\Lambda)}$ for all non-zero projections $z\in L^\infty(X)^{A^{(\Gamma)}}$. This implies that $L(A^i) \prec^s_M L^\infty(Y)\rtimes B^{(\Lambda)}$ for any $i\in\Gamma$. By using \cite[Proposition 4.4]{CDAD23B} and \cite[Lemma 2.3]{BeVa14} it follows that $L^\infty(X)\rtimes A^F \prec^s_M L^\infty(Y)\rtimes B^{(\Lambda)}$ for any finite subset $F\subset\Gamma$. By using the fact that relative amenability is closed under inductive limits (see \cite[Proposition 2.7]{DHI16}) it follows that $L^\infty(X)\rtimes A^{(\Gamma)}$ is amenable relative to $L^\infty(Y)\rtimes B^{(\Lambda)}$ inside $M$. By proceeding as in the previous paragraph, we derive that the proof is completed.
\end{proof}

\begin{thm}[cf.\ \cite{CU20}]\label{main.infinite.product.rigidity}
Let $\{\Gamma_n\}_{n\in \N}$ be a family of nonamenable i.c.c.\ groups.
Suppose $L(\oplus_{n\in \N}\Gamma_n)$ is isomorphic to $ L\Lambda$ for some group $\Lambda$.
If each $\Gamma_n$ is biexact and has property (T),
	then there exist an i.c.c.\ amenable group $A$ and 
	a family of i.c.c.\ biexact groups $\{\Lambda_n\}_{n\in\N}$
	such that $\Lambda=(\oplus_{n\in \N} \Lambda_n)\oplus A$,
    and $L\Gamma_n$ is stably isomorphic to $L\Lambda_n$ for each $n\in \N$.
\end{thm}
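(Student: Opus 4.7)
With $M=L(\oplus_{n\in\N}\Gamma_n)\cong L\Lambda$ and $M_n=L\Gamma_n$ so that $M=\ovt_{n\in\N}M_n$, the plan is to follow the strategy of \cite[Theorem A]{CU20}, in parallel with the proof of Theorem~\ref{theorem.product.rigidity} above but in the infinite tensor setting. The key substitution is to use Corollary~\ref{corollary.relative.solidity} in place of Popa--Vaes' relative strong solidity \cite{PoVa14II} wherever weak amenability was invoked.

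For each $n$, since $\Gamma_n$ has property (T) and $L\Gamma_n\subset L\Lambda=M$, one applies Ioana's ultrapower technique \cite[Theorem 3.1]{Ioa11b} to $\Delta(L\Gamma_n)$, where $\Delta\colon L\Lambda\to L\Lambda\ovt L\Lambda$ is the comultiplication, exactly as in \cite[Theorem 3.3]{CdSS15}, to produce a subgroup $\Sigma_n<\Lambda$ with $L\Gamma_n\prec_M L\Sigma_n$ and $C_\Lambda(\Sigma_n)$ nonamenable. A minimality argument among such $\Sigma_n$ yields a distinguished subgroup $\Lambda_n$. To upgrade this intertwining to a stable isomorphism $L\Gamma_n\cong (L\Lambda_n)^{t_n}$, one applies Corollary~\ref{corollary.relative.solidity} to $L\Lambda_n\subset M$: since $(L\Lambda_n)'\cap M\supset L(C_\Lambda(\Lambda_n))$ is nonamenable, one obtains $L\Lambda_n\prec_M M_{\hat k}$ for some $k\in\N$; combining with $L\Gamma_n\prec_M L\Lambda_n$ and the tensor decomposition $M=M_n\ovt M_{\hat n}$ forces $k=n$, and a standard extraction argument gives the stable isomorphism.

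The commutation of distinct $L\Gamma_m$ and $L\Gamma_n$ in $M$, combined with the intertwinings of the previous step, transfers to commutation of $\Lambda_m$ and $\Lambda_n$ (after replacing each by a finite-index subgroup if necessary). The pairwise intersections $\Lambda_m\cap\Lambda_n$ for $m\neq n$ must be finite: if $L(\Lambda_m\cap\Lambda_n)$ were infinite-dimensional, Corollary~\ref{corollary.relative.solidity} would force it to intertwine into some $M_{\hat k}$, which cannot respect the stable isomorphisms to both $M_m$ and $M_n$. Setting $\Lambda_\infty=\oplus_n\Lambda_n$ and $A=C_\Lambda(\Lambda_\infty)$, a final application of Corollary~\ref{corollary.relative.solidity} to $LA\subset (L\Lambda_\infty)'\cap M$ shows that $LA$ is amenable, since $L\Lambda_\infty$ contains nonamenable subalgebras in every tensor slot $M_k$; hence $A$ is amenable and $\Lambda=\Lambda_\infty\oplus A$ by the normality of $A$ in the i.c.c.\ group $\Lambda$.

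The main obstacle is the passage from intertwining to stable isomorphism in the absence of relative strong solidity. In \cite{CU20}, weak amenability enables the use of \cite{PoVa14II} to control the full normalizer of $L\Lambda_n$, immediately yielding the stable isomorphism. Our Corollary~\ref{corollary.relative.solidity} only controls commutants of finite von Neumann subalgebras, not normalizers; but this weaker input suffices when coupled with property (T) of $\Gamma_n$ (which drives Ioana's intertwining) and with the rigidity of the infinite tensor decomposition $M=\ovt_n M_n$ (which rules out confusion between distinct tensor indices).
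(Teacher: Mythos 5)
Your high-level plan (comultiplication, Corollary~\ref{corollary.relative.solidity} in place of \cite{PoVa14II}, Ioana's ultrapower technique plus property (T), the \cite{CU20} induction scheme) matches the paper's, but there is a genuine gap precisely at the step you dismiss as ``a standard extraction argument''. Corollary~\ref{corollary.relative.solidity} only yields a one-sided intertwining $L\Lambda_n\prec_M M_{\widehat k}$ for a \emph{single} $k$, and your claim that combining this with $L\Gamma_n\prec_M L\Lambda_n$ ``forces $k=n$'' is backwards: $k=n$ is the impossible case, since (after an upgrade to strong intertwining, which is itself not automatic --- $\prec$ is not transitive) it would embed a corner of $M_n$ into $M_{\widehat n}$, while a single $k\neq n$ in no way locates $L\Lambda_n$ inside $M_n$ up to amplification. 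The heart of the paper's proof is exactly to overcome this: starting from $\Delta(M_{\widehat 1})\prec_{M\ovt M}M\ovt M_{\widehat k}$ it extracts commuting nonamenable subgroups $\Sigma,\Theta<\Lambda$ with $M_{\widehat 1}\prec_M L(\Sigma)$ and $M_k\prec_M L(\Theta)$ (here property (T) is used for the \emph{single} factor $M_k$ on the centralizer side --- note that $M_{\widehat 1}$, being an infinite tensor product, does not have property (T), so your attribution of the extraction to property (T) of $\Gamma_n$ alone is already off), passes to relative commutants, and then proves $k=1$ together with the strong intertwining $L(\Sigma)\prec^s_M M_{\widehat 1}$ by a contradiction argument combining Lemma~\ref{lemma.trick} (passage to a Bernoulli extension), the flip automorphism of $M\ovt M$, and repeated applications of Corollary~\ref{corollary.relative.solidity}. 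Only with the resulting quadruple $M_{\widehat 1}\prec L(\Sigma)$, $L(\Sigma)\prec^s_M M_{\widehat 1}$, $M_1\prec L(\Theta)$, $L(\Theta)\prec_M M_1$ can \cite[Theorem 2.3]{Dr21} (see also \cite[Theorem 6.1]{DHI16}) be invoked to produce $t>0$, a unitary $u$ and a splitting $\Lambda=\Lambda_1\times\Lambda_{\widehat 1}$ with $uM_1^tu^*=L(\Lambda_1)$ and $uM_{\widehat 1}^{1/t}u^*=L(\Lambda_{\widehat 1})$; none of this is routine extraction.

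Your global assembly also diverges from the paper in a way that is unsupported. The paper peels off one factor at a time via the displayed splitting theorem and iterates as in \cite[Remark 3.9]{CU20}: each step yields a genuine direct product decomposition of the group, conjugating \emph{both} tensor legs simultaneously, so $\oplus_n\Lambda_n$ and the i.c.c.\ amenable remainder $A$ emerge canonically from the induction. Your alternative --- producing all $\Lambda_n$ at once, arranging commutation ``after finite-index replacements'', asserting finiteness of pairwise intersections, and concluding $\Lambda=\Lambda_\infty\oplus A$ ``by normality of $A$'' --- does not go through as stated: $C_\Lambda(\Lambda_\infty)$ need be neither normal nor a complement of $\Lambda_\infty$ without the stepwise product splittings, and you have no control over how the various unitaries and amplification constants $t_n$ interact across indices. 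Your final amenability argument for $LA$ via Corollary~\ref{corollary.relative.solidity} is in the right spirit (this is how the limit step in the \cite{CU20} scheme works), but by itself it yields neither the direct sum decomposition of $\Lambda$ nor that $A$ is i.c.c.
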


The proof of Theorem \ref{main.infinite.product.rigidity} will follow as in \cite{CU20} by inductively using the following result.
See also \cite[Remark 3.9]{CU20}.

\begin{thm}
Let $\{M_i\}_{i\in\mathbb N}$  be a family of biexact II$_1$ factors with property (T) and denote $M=\ovt_{i\in\mathbb N} M_i$. Let $\Lambda$ be a countable group such that $M=L(\Lambda)$.

Then there exist a decomposition $\Lambda=\Lambda_1\times \Lambda_{\widehat 1}$, a positive number $t>0$ and a unitary $u\in M$ such that $uM_1^t u^*= L(\Lambda_1)$ and $u M_{\widehat 1}^{1/t} u^* = L(\Lambda_{\widehat 1})$.

\end{thm}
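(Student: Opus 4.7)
The approach is to adapt the flip-automorphism strategy of Theorem~\ref{theorem.upf.biexact}, replacing the finite-product relative solidity Theorem~\ref{cor: rel solid in ME} with its infinite-product counterpart Corollary~\ref{corollary.relative.solidity}. The starting point is the comultiplication $\Delta:L(\Lambda)\to L(\Lambda)\ovt L(\Lambda)$, $\Delta(v_\lambda)=v_\lambda\otimes v_\lambda$, under which $\Delta(M_1)$ and $\Delta(M_{\widehat 1})$ sit as commuting subfactors of $\mathcal M:=M\ovt M=(\ovt_{i\in\mathbb N}M_i)\ovt(\ovt_{j\in\mathbb N}M_j)$, an infinite tensor product of biexact $\mathrm{II}_1$ factors indexed by $\mathbb N\sqcup\mathbb N$. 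Applying Corollary~\ref{corollary.relative.solidity} (with $M_0=\mathbb C$) to $\Delta(M_1)\subset\mathcal M$: since $\Delta(M_{\widehat 1})\subset\Delta(M_1)'\cap\mathcal M$ is nonamenable, we obtain that $\Delta(M_1)\prec_{\mathcal M}M_{\widehat k}\ovt M$ or $\Delta(M_1)\prec_{\mathcal M}M\ovt M_{\widehat \ell}$ for some indices $k,\ell\in\mathbb N$.

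Next I would symmetrize using the flip $\sigma\in\Aut(\mathcal M)$ that swaps the two tensor factors. Since $\sigma\circ\Delta=\Delta$, the image $\Delta(L(\Lambda))$ is $\sigma$-invariant, and the above dichotomy can be iterated on both sides simultaneously, in the spirit of the proof of Theorem~\ref{theorem.upf.biexact}. Property (T) of $M_1$ plays a crucial role here: it ensures that the relevant intertwinings are captured in a ``finite'' way, preventing the tensor indices from escaping to infinity across the infinite product. The outcome of this symmetrization is an intertwining $\Delta(M_1)\prec_{\mathcal M}M_{\widehat k}\ovt M_{\widehat \ell}$ for specific $k,\ell\in\mathbb N$, possibly after restricting to an appropriate corner.

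I would then translate this back to the group level. The comultiplication descent à la Lemma~\ref{lemma.trick} (applied in the style of \cite[Lemma 3.4]{CD-AD23A}) converts $\Delta(M_1)\prec_{\mathcal M}M_{\widehat k}\ovt M_{\widehat \ell}$ into a subgroup-level intertwining: there exists a subgroup $\Sigma<\Lambda$ with $M_1\prec_M L(\Sigma)$. Property (T) of $M_1$, combined with Ioana's ultrapower technique from \cite{Ioa11b} (as used in the proof of Theorem~\ref{theorem.product.rigidity}), then upgrades this intertwining to a unitary conjugation: one finds $t>0$, $u\in\mathcal U(M)$, and a subgroup $\Lambda_1<\Lambda$ such that $uM_1^tu^*=L(\Lambda_1)$. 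The commuting factor $M_{\widehat 1}=M_1'\cap M$ conjugates to $L(\Lambda_1)'\cap L(\Lambda)=L(C_\Lambda(\Lambda_1))$, so $uM_{\widehat 1}^{1/t}u^*=L(C_\Lambda(\Lambda_1))$. Finally, the identification $M=M_1^t\ovt M_{\widehat 1}^{1/t}$ forces $L(\Lambda)=L(\Lambda_1)\ovt L(C_\Lambda(\Lambda_1))$, which yields the decomposition $\Lambda=\Lambda_1\times\Lambda_{\widehat 1}$ with $\Lambda_{\widehat 1}:=C_\Lambda(\Lambda_1)$.

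The main obstacle is the symmetrization in step two: in the infinite tensor product setting, Corollary~\ref{corollary.relative.solidity} only guarantees intertwining into \emph{some} tensor complement $M_{\widehat k}\ovt M$, and showing that the flip symmetrization simultaneously refines this into both factors with fixed indices $k,\ell$ requires the property (T) of $M_1$ to rule out an ``escape to infinity'' phenomenon across the infinite tensor decomposition, a difficulty absent in the finite product setting of Theorem~\ref{theorem.upf.biexact}. Additionally, correctly tracking the amplification $t$ through the comultiplication descent and upgrading a mere intertwining to a genuine unitary conjugation at a precise amplification are both delicate, requiring a careful synthesis of property (T) rigidity with the infinite-product relative solidity developed in Section~\ref{sec: biexact}.
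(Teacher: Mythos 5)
Your high-level skeleton (comultiplication, the infinite-product relative solidity of Corollary~\ref{corollary.relative.solidity}, Ioana's ultrapower technique plus property (T), then an upgrade to unitary conjugacy) matches the paper's, but there is a genuine gap in the middle, and it stems from the orientation of your very first dichotomy. You apply Corollary~\ref{corollary.relative.solidity} to $\Delta(M_1)$, whereas the paper applies it to $\Delta(M_{\widehat 1})$, obtaining (after the flip symmetry of $\Delta$) the intertwining $\Delta(M_{\widehat 1})\prec_{M\ovt M}M\ovt M_{\widehat k}$. The difference is not cosmetic. Feeding your intertwining $\Delta(M_1)\prec_{M\ovt M} M\ovt M_{\widehat \ell}$ into the ultrapower technique \cite[Theorem 4.1]{DHI16} produces decreasing subgroups $\Sigma_i<\Lambda$ with $M_1\prec_M L(\Sigma_i)$ and $M_\ell\prec_M L(\cup_i C_\Lambda(\Sigma_i))$; passing to relative commutants then only yields $L(C_\Lambda(\Sigma_j))\prec_M M_{\widehat 1}$ and $L(\Sigma_j)\prec_M M_{\widehat \ell}$, i.e.\ both targets are the \emph{large} tensor complements, and nothing squeezes: $M_1\prec_M L(\Sigma)$ carries no upper bound on $\Sigma$ (it holds trivially for $\Sigma=\Lambda$), and $M_1\subset M_{\widehat \ell}$ for every $\ell\neq 1$, so no contradiction argument can pin an index. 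The paper's orientation intertwines the infinite part $M_{\widehat 1}$ into $L(\Sigma)$ and the single property (T) factor $M_k$ into $L(\Theta)$ with $\Theta=C_\Lambda(\Sigma_j)$, so that commutant passing gives the \emph{small} targets $L(\Theta)\prec_M M_1$ and $L(\Sigma)\prec_M M_{\widehat k}$; a further contradiction argument (using Lemma~\ref{lemma.trick}, the flip, and Corollary~\ref{corollary.relative.solidity} again) upgrades to $L(\Sigma)\prec^s_M M_{\widehat k}$ and forces $k=1$. These four matched intertwinings $M_{\widehat 1}\prec_M L(\Sigma)$, $L(\Sigma)\prec^s_M M_{\widehat 1}$, $M_1\prec_M L(\Theta)$, $L(\Theta)\prec_M M_1$ are exactly the hypotheses of the product-rigidity criterion \cite[Theorem 2.3]{Dr21} (see also \cite[Theorem 6.1]{DHI16}) that delivers $t>0$, the unitary $u$, and the group decomposition $\Lambda=\Lambda_1\times\Lambda_{\widehat 1}$; none of them is established in your sketch.

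Two misattributions compound the gap. First, Lemma~\ref{lemma.trick} goes in the opposite direction from your ``comultiplication descent'': it converts group-level intertwinings \emph{into} $\Delta$-level ones, so it cannot extract a subgroup $\Sigma$ from $\Delta(M_1)\prec_{M\ovt M} M_{\widehat k}\ovt M_{\widehat \ell}$; the only descent available is the ultrapower theorem, and that theorem outputs an intertwining into $L(\Sigma)$, never a unitary conjugation at a definite amplification, so your closing assertion that ``property (T) plus the ultrapower technique upgrades to $uM_1^tu^*=L(\Lambda_1)$'' states precisely the hard part without proof (likewise, $uM_{\widehat 1}^{1/t}u^*=L(C_\Lambda(\Lambda_1))$ and the splitting $\Lambda=\Lambda_1\times C_\Lambda(\Lambda_1)$ are part of what the cited criterion delivers, not formal consequences). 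Second, property (T) is not what combines intertwinings into two tensor legs in your symmetrization step: that combination is done with $\prec^s$ (available because $\Delta(M)'\cap(M\ovt M)=\C 1$) together with \cite[Lemma 2.8]{DHI16}; property (T) enters only afterwards, to extract a single centralizer $C_\Lambda(\Sigma_j)$ from the increasing union $\cup_i C_\Lambda(\Sigma_i)$ produced by the ultrapower, which is the actual ``no escape to infinity'' point.
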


\begin{proof}
Let $\Delta:M\to M\ovt M$ be the $*$-homomorphism given by $\Delta(v_\lambda)=v_\lambda\otimes v_\lambda$, for any $\lambda\in \Lambda$. By using Corollary \ref{corollary.relative.solidity} there exists $k\in\mathbb N$ such that $\Delta(M_{\widehat 1})\prec_{M\ovt M} M\ovt M_{\widehat k}$ or $\Delta(M_{\widehat 1})\prec_{M\ovt M} M_{\widehat k}\ovt M$. By using the symmetry of the operator $\Delta$, we can assume that 
\begin{equation}\label{eq0}
\Delta(M_{\widehat 1})\prec_{M\ovt M} M\ovt M_{\widehat k}.     
\end{equation}

Next, we apply Ioana's ultrapower technique  \cite[Theorem 4.1]{DHI16} (see also \cite[Theorem 3.1]{Ioa11b})
to deduce that there exists a decreasing sequence $\{\Sigma_{i}\}_{i\in\mathbb N}$ of subgroups of $\Lambda$ such that $M_{\widehat 1}\prec_M L(\Sigma_i)$ for any $i \in\mathbb N$ and $M_k\prec_M L(\cup_{i \in \mathbb N}L(C_\Lambda(\Sigma_i)))$. 

Since $M_k$ has property (T), we may find some $j\in \N$ such that $M_k\prec_M L(C_\Lambda(\Sigma_j))$.
Thus there exist two nonamenable commuting subgroups $\Sigma:=\Sigma_j,\Theta:=C_\Lambda(\Sigma_j)$ of $\Lambda$ such that
\begin{equation}\label{eq1}
M_{\widehat 1}\prec_M L(\Sigma) \text{  and  } M_k\prec L (\Theta).   
\end{equation}

By passing to relative commutants in \eqref{eq1} it follows that
\begin{equation}\label{eq2}
L(\Theta)\prec_M M_1 \text{  and  } L(\Sigma)\prec M_{\widehat k}.    
\end{equation}

We continue by showing that $L(\Sigma)\prec^s_M M_{\widehat k}$ and $k=1$. To this end, let $z\in \mathcal N_{M}(L(\Sigma))'\cap M\subset L(\Sigma\Theta)'\cap M$ be a nonzero projection. Since $L(\Theta)z$ is non-amenable, then Corollary \ref{corollary.relative.solidity} implies that there exists $j\in\mathbb N$ such that $L(\Sigma)z\prec_M M_{\widehat{j}}$. We assume by contradiction that $j\neq k$. By Lemma \ref{lemma.trick} it follows that $\Delta(M_{\widehat 1})\prec^s_{M\ovt M} M\ovt M_{\widehat j}$. Together with \eqref{eq0}, it follows from \cite[Lemma 2.8]{DHI16} that $\Delta(M_{\widehat 1})\prec^s_{M\ovt M} M\ovt M_{\widehat {\{j,k\}}}$. By applying again \cite[Theorem 4.1]{DHI16}, there exists a subgroup $\Sigma_0<\Lambda$ such that $M_{\widehat 1}\prec_M L(\Sigma_0)$ and $M_{\{j,k\}}\prec_m L(C_\Lambda(\Sigma_0))$. By passing to relative commutants, it follows that $L(C_\Lambda(\Sigma_0))\prec_M M_1$. By using Lemma \ref{lemma.trick} it follows that $\Delta(M_{\{j,k\}})\prec_{M\ovt M} M\ovt M_{1}$, and hence, $\Delta(M_{\{j,k\}})\prec^s_{M\ovt M} M\ovt M_{1}$ since $M_{\{j,k\}}$ is regular in $M$. By applying the flip automorphism on $M\ovt M$, it follows that $\Delta(M_{\{j,k\}})\prec^s_{M\ovt M} M_1\ovt M$ as well. By applying \cite[Lemma 2.8]{DHI16} it follows that $\Delta(M_{\{j,k\}})\prec^s_{M\ovt M} M_1\ovt M_{1}$. By using Corollary \ref{corollary.relative.solidity} it is not hard to get that $\Delta(M_j)\prec_{M\ovt M} M\otimes 1$, which contradicts the fact that $M_j$ is a II$_1$  factor. Thus, $L(\Sigma)z\prec_M M_{\widehat k}$, which implies by \cite[Lemma 2.4]{DHI16} that $L(\Sigma)\prec_M^s M_{\widehat k}$. 
Together with \eqref{eq2} it follows from \cite[Lemma 3.7]{Va07} that $M_{\widehat 1}\prec_M M_{\widehat k}$. This implies that $k=1$.

To summarize, we proved that $M_{\widehat 1}\prec_M L(\Sigma)$, $L(\Sigma)\prec_M^s M_{\widehat 1}$, $M_1\prec L (\Theta)$ and $L(\Theta)\prec_M M_1$. The conclusion of the theorem follows now by applying \cite[Theorem 2.3]{Dr21} (see also \cite[Theorem 6.1]{DHI16}).
\end{proof}

\bibliographystyle{amsalpha}
\bibliography{ref}

\end{document}